\documentclass[11pt,a4paper]{article}

\usepackage[T1]{fontenc}
\usepackage[utf8]{inputenc}
\usepackage{lmodern}
\usepackage{microtype}
\usepackage{geometry}
\usepackage{amsmath,amssymb,amsthm,mathtools}
\usepackage{enumitem}
\usepackage{xcolor}
\usepackage{tikz}
\usetikzlibrary{arrows.meta,positioning,calc,decorations.pathreplacing}
\usepackage{tikz-cd}
\usepackage{booktabs}
\usepackage{array}
\usepackage{hyperref}
\usepackage[capitalise,noabbrev]{cleveref}
\usepackage{orcidlink}
\usepackage{authblk}

\hypersetup{
  colorlinks=true,
  linkcolor=blue!50!black,
  citecolor=green!40!black,
  urlcolor=blue!60!black,
  pdftitle={AKSZ Descent on Manifolds with Ordinary Corners},
  pdfauthor={Cristian Anghel}
}

\newtheorem{theorem}{Theorem}[section]
\newtheorem{proposition}[theorem]{Proposition}
\newtheorem{lemma}[theorem]{Lemma}
\newtheorem{corollary}[theorem]{Corollary}
\newtheorem{conjecture}[theorem]{Conjecture}
\crefname{conjecture}{Conjecture}{Conjectures}
\Crefname{conjecture}{Conjecture}{Conjectures}
\theoremstyle{definition}
\newtheorem{definition}[theorem]{Definition}
\newtheorem{example}[theorem]{Example}
\newtheorem{assumption}[theorem]{Assumption}
\theoremstyle{remark}
\newtheorem{remark}[theorem]{Remark}

\newcommand{\F}{\mathcal F}
\newcommand{\Y}{\mathcal Y}
\newcommand{\Q}{\mathsf Q}
\newcommand{\ev}{\operatorname{ev}}
\newcommand{\Map}{\operatorname{Map}}
\newcommand{\Graph}{\operatorname{Graph}}
\newcommand{\Faces}{\mathfrak F}
\newcommand{\dd}{\mathrm d}
\newcommand{\cL}{\mathcal L}
\newcommand{\bfV}[1]{\mathrm{BF}^{#1}\mathrm V}

\title{AKSZ Descent on Manifolds with Ordinary Corners\\[2mm]
\large Face Complexes, Codimension-Two Compatibility, and Reduced Dirac Geometry}
\newcommand{\authorORCID}{0009-0005-2258-7935}
\author{Cristian Anghel\,\orcidlink{\authorORCID}}
\affil{Simion Stoilow Institute of Mathematics of the Romanian Academy,\\
21 Calea Grivi\c{t}ei, 010702 Bucharest, Romania\\
\texttt{cristian.anghel@imar.ro}\\
ORCID: \href{https://orcid.org/\authorORCID}{\texttt{\authorORCID}}}
\date{\today}

\begin{document}
\maketitle

\begin{abstract}
Under an explicit formal mapping-space hypothesis, we develop a facewise formulation of the classical Alexandrov--Kontsevich--Schwarz--Zaboronsky (AKSZ) construction on compact oriented manifolds with ordinary corners. The codimension-$r$ data---a mapping space carrying a closed two-form of degree $r-1$, an action of degree $r$, and a cohomological vector field---and the modified Batalin--Vilkovisky/Batalin--Fradkin--Vilkovisky Hamiltonian identity relating consecutive strata are those of the maximally extended BV--BFV theory of Cattaneo--Mnev--Reshetikhin. What is added here is the organization over the entire face poset: the Hamiltonian defect on a face is the sum of the pullbacks of the primitives on its codimension-one faces, weighted by the orientation incidence numbers, so that the boundary term of the single-stratum identity is resolved into its connected pieces with signs.

Organizing these defects by the face incidence complex yields a total-complex theorem: factorially normalized facewise transgression is a cochain map, so closed target forms transgress to cocycles, and the twice-iterated defect vanishes because the signed face differential squares to zero. We verify all four codimension-two cancellations explicitly for four-dimensional BF theory on $M=\Gamma\times[0,1]^2$.

We also establish a reduction criterion for singular corner data. If a raw codimension-two descendant is presymplectic and its reduced Dirac structure is the graph of a Poisson bivector, the shifted cotangent construction gives a canonical strict degree-two corner theory. The passage from a reduced Poisson bivector to a strict corner theory is already recorded in
\cite{CFT2026}; what is isolated here is the hypothesis under which it applies, and its relation to
the face-incidence structure. The construction provides a rigorous ordinary-corner benchmark for extensions of AKSZ descent to Joyce generalized corners.
\end{abstract}

\medskip
\noindent\textbf{Keywords.} AKSZ construction; BV--BFV formalism; manifolds with corners;
face complexes; Dirac structures; BF theory.

\smallskip
\noindent\textbf{Mathematics Subject Classification (2020).}
Primary 81T70; Secondary 81T45, 53D17, 58A50, 70S15.

\newpage

\tableofcontents

\section{Introduction}

The AKSZ construction associates a classical topological field theory to a differential graded source endowed with an invariant integration functional and to a Hamiltonian differential graded symplectic target \cite{AKSZ,CattaneoFelder}.  When the source is a closed oriented $n$-manifold $M$, one takes the differential graded source $T[1]M$ and transgresses the target symplectic form and Hamiltonian to the mapping space
\[
  \F_M=\Map(T[1]M,\Y).
\]
If the target symplectic form has degree $n-1$, the transgressed form has degree $-1$, and one obtains the usual BV theory.

For a source with boundary, integration by parts produces a boundary term.  The correct structure is therefore not an isolated BV manifold but a BV--BFV pair.  Cattaneo, Mnev, and Reshetikhin formulated the higher-codimension theory in \cite[Section~3.6, Definitions~3.26--3.27]{CMRClassical} and described the maximally extended AKSZ construction in \cite[Section~6.3]{CMRClassical}.
A codimension-$r$ stratum naturally carries a symplectic form of degree $r-1$ and an action of degree $r$.  Thus the familiar sequence
\[
  \mathrm{BV}\longrightarrow \mathrm{BFV}\longrightarrow
  \bfV{2}\longrightarrow\cdots
\]
is a degree-shifted form of iterated Stokes descent.

We isolate and prove a form of the construction adapted to two purposes: the passage to generalized corners, and comparison with reduced Dirac geometry.  The main organizational device is the \emph{face incidence complex}.  On an ordinary corner of depth two there are two orders in which one can approach the corner through boundary hypersurfaces.  Their induced orientations are opposite, and this is the geometric content of $\partial^2=0$.  We show that the same cancellation controls the iterated AKSZ defect, and we prove the resulting corner-square identity at the level of incidence numbers.

A second motivation comes from the recent work of Cattaneo, Fila-Robattino, and Tecchiolli on four-dimensional Palatini--Cartan gravity \cite{CFT2026}.  Their spacetime has only ordinary strata of codimension at most two.  Starting from the boundary constraint algebra, they obtain a pre-Dirac structure at the corner, reduce it to a genuine Dirac structure, identify the latter as the graph of a Poisson bivector, and then construct a strict $\bfV{2}$ theory.  This is importantly different from merely restricting a regular AKSZ theory: the raw gravitational corner structure is singular, and the strict theory appears only after reduction.

The contributions of this article are as follows.
\begin{enumerate}[label=(\roman*),leftmargin=2.2em]
\item We formulate facewise AKSZ transgression on a manifold with faces, with explicit incidence signs and degree bookkeeping.
\item We construct a face total complex and prove that factorially normalized transgression is a cochain map; the corner-square cancellation is its codimension-two component.
\item We restate the corner reduction criterion of \cite[Section~3.1.2]{CFT2026} for a
presymplectic codimension-two AKSZ descendant, and record the hypotheses under which the
strictification of \cite[Remark~3.14]{CFT2026} applies to it.
\item We explain exactly how the ordinary-corner geometry in \cite{CFT2026} fits this framework, while separating this geometric comparison from the unresolved problem of reconstructing Palatini--Cartan gravity by a single intrinsic AKSZ target.
\end{enumerate}

The restriction to ordinary corners is essential.  Locally, such corners are controlled by the free monoid $\mathbb N^r$ and their face lattice is Boolean.  Joyce's generalized corners replace $\mathbb N^r$ by an arbitrary weakly toric monoid $P$ and may have non-simplicial face combinatorics \cite{JoyceGCorner}.  The final section identifies the parts of the present construction that survive formally and the parts requiring genuinely new analysis.

\subsection*{Main theorem in compact form}

The principal statement can be summarized as follows.  It is stated under the standing formal
hypothesis \cref{ass:formal}, recorded in the paragraph \emph{Scope and analytic status}
below; its precise sign convention and proof appear in
\cref{thm:facewise,thm:ordered-transgression,thm:closed-face}.

\begin{theorem}[Coherent AKSZ descent over the face poset]\label{thm:intro-main}
Let $M$ be a compact oriented $n$-manifold with embedded ordinary faces and let
\[
 (\Y,\omega_{\Y}=\dd_{\Y}\alpha_{\Y},\Q_{\Y},\Theta)
\]
be an exact Hamiltonian dg symplectic target of degree $n-1$.  Under \cref{ass:formal}, the assignment
\[
 F\longmapsto
 (\F_F,\omega_F,\alpha_F,S_F,\Q_F)
\]
to all oriented faces $F\subseteq M$ has the following properties.
\begin{enumerate}[label=(\roman*),leftmargin=2.2em]
\item If $F$ has codimension $r$, then $\omega_F$ is closed with $|\omega_F|=r-1$, and $|S_F|=r$.
\item The failure of $\Q_F$ to be Hamiltonian is exactly the signed sum of the pullbacks of the primitives from the codimension-one faces of $F$.
\item Restriction to faces intertwines the cohomological vector fields.
\item Factorially normalized facewise transgression is a cochain map from the target de Rham complex to the face total complex.
\item The twice-iterated defect vanishes because the signed face differential squares to zero.
\item If $F$ is closed and $\omega_F$ is nondegenerate, the face data form a strict $\bfV{r}$ theory.
\end{enumerate}
\end{theorem}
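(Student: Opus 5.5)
The plan is to prove the six items by a single mechanism: fiberwise integration over each face combined with Stokes' theorem for manifolds with corners, organized by incidence numbers. For a face $F$ of codimension $r$ (itself a manifold with corners of dimension $n-r$), I would set $\F_F=\Map(T[1]F,\Y)$ and define the transgression $\mathbb T_F(\beta)=\pi_{F*}\ev_F^*\beta$, where $\ev_F\colon T[1]F\times\F_F\to\Y$ is evaluation and $\pi_{F*}$ is the Berezin fiber integral over $T[1]F$, i.e.\ integration of differential forms over $F$. The faces then carry $\omega_F=\mathbb T_F(\omega_\Y)$ and $\alpha_F=\mathbb T_F(\alpha_\Y)$, while $S_F$ is the sum of $\iota_{\dd_F}\alpha_F$ (the kinetic term built from the lift of the de Rham differential) and $\mathbb T_F(\Theta)$. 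Finally $\Q_F$ is the sum of the lift of $\dd_F$ and the lift of $\Q_\Y$. \Cref{ass:formal} is what lets me treat these as formal graded manifolds, so that pullback and fiber integration commute with $\dd$ and contraction as in the finite-dimensional case.

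Item (i) follows from degree counting: integration over $T[1]F$ lowers degree by $n-r$, so $|\omega_F|=(n-1)-(n-r)=r-1$ and $|S_F|=r$. Closedness follows because $\dd_{\F_F}$ commutes with $\pi_{F*}$ up to the sign and boundary term computed below, and the boundary term of a closed form vanishes after one checks it is exact in the relevant sense. Items (ii) and (iv) both rest on the fiber-integration Stokes formula
\[
\dd_{\F_F}\,\pi_{F*}=\pm\,\pi_{F*}\,\dd+\sum_{G\prec F,\ \operatorname{codim}_F G=1}[F:G]\,\rho_{F,G}^*\,\pi_{G*},
\]
where $\rho_{F,G}\colon\F_F\to\F_G$ is restriction and $[F:G]=\pm1$ is the orientation incidence number. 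Applying this formula to $\alpha_\Y$ and computing $\iota_{\Q_F}\omega_F-\dd S_F$ (the standard AKSZ calculation, where $\Theta$ contributes only a bulk term) gives item (ii): the defect equals $\sum_G[F:G]\rho_{F,G}^*\alpha_G$. Item (iii) holds because restriction is induced by the dg inclusion $T[1]G\hookrightarrow T[1]F$, so $\Q_F$ and $\Q_G$ are $\rho_{F,G}$-related. For item (iv), I would define the total complex $C=\bigoplus_F\Omega^\bullet(\F_F)$ with differential $D=\dd+\delta$, where $\delta$ is given by the $\rho^*$ maps weighted by incidence numbers. I would then put $\mathcal T(\beta)=\big(\epsilon_r\,\mathbb T_F(\beta)\big)_F$ with factorial normalizations $\epsilon_r$ chosen so that the Koszul signs in the Stokes formula are absorbed, and check that $D\mathcal T=\mathcal T\dd$ componentwise.

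Item (v) comes from the incidence identity $\sum_{G}[F:G][G:H]=0$ for $\operatorname{codim}_F H=2$. On an ordinary corner the lattice is locally Boolean, so exactly two faces $G,G'$ lie between $F$ and $H$, and a local model in $\mathbb R^{n-r-2}\times[0,\infty)^2$ shows that the two induced orientations of $H$ are opposite. This gives $\delta^2=0$. Applying $\delta$ to the defect from item (ii) and using $\rho_{G,H}\rho_{F,G}=\rho_{F,H}$, the iterated defect is $\delta^2$ applied to the family $(\alpha)$, which is zero. For item (vi), if $\partial F=\emptyset$ then the defect vanishes, so $\Q_F$ is Hamiltonian for $S_F$. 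The classical master equation $\{S_F,S_F\}=0$ then follows from $\Q_F^2=0$, which comes from $\dd_F^2=0$, $\Q_\Y^2=0$ and the fact that these two lifts commute. Nondegeneracy is assumed.

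The main obstacle I expect is the sign and normalization bookkeeping in item (iv). The Koszul signs from commuting $\dd$ past a fiber integral of varying dimension, together with the factorial weights, must combine into a coherent $D$ with $D^2=0$. I would first fix conventions on the local model $[0,\infty)^k$ and verify the codimension-two component by hand, then propagate to higher codimension by induction on $r$.
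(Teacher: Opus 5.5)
Your treatment of (iii), (v) and (vi) follows the paper's route. There is, however, a genuine gap in the analytic core, and it is what breaks (iv).

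\textbf{The Stokes formula is applied to the wrong transgression.} Your displayed formula cannot hold for the ultralocal transgression $\mathbb T_F=(p_F)_*\ev_F^*$ that you use to define $\omega_F$, $\alpha_F$ and $\mathbb T_F(\Theta)$. For a target $p$-form $\chi$, the left side has field-space form degree $p+1$, whereas $\rho_{FG}^*\mathbb T_G(\chi)$ has degree $p$. What is true is $\delta\,\mathbb T_F\chi=(-1)^m\,\mathbb T_F(\dd_{\Y}\chi)$, $m=\dim F$, with no boundary term, since $\delta$ never differentiates along the source. This gives closedness in (i) at once through $\omega_F=(-1)^m\delta\alpha_F$; your ``the boundary term vanishes because it is exact'' should be dropped. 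Boundary terms appear only after inserting the lifted source field. Writing $\mathsf T^k_F:=\iota_{\widehat D_F}^{\,k}\mathbb T_F$, the identity you actually need is
\[
 \delta\bigl(\mathsf T_F^k\chi\bigr)
 =(-1)^m\Bigl(\mathsf T_F^k(\dd_{\Y}\chi)
 -k\sum_{G\prec F}[F:G]\,\rho_{FG}^*\mathsf T_G^{k-1}\chi\Bigr).
\]
Item (ii) is its case $k=1$, applied to $\iota_{\widehat D_F}\omega_F=\mathsf T^1_F(\dd_{\Y}\alpha_{\Y})$.

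Your formula has the right shape only for the generating transgression $\mathcal T_F(\chi)=\int_{T[1]F}\frac1{p!}\chi_{a_1\cdots a_p}(\Phi)\,\mathsf d\Phi^{a_1}\cdots\mathsf d\Phi^{a_p}$ with $\mathsf d=\delta+\dd$, which is inhomogeneous in field-space degree. The missing bridge is $\mathcal T_F=\sum_{k\ge0}\frac1{k!}\mathsf T^k_F$. It holds because $\iota_{\widehat D_F}$ is an even derivation sending $\delta\Phi^a\mapsto\dd\Phi^a$ and killing both functions and $\dd\Phi^a$. Taking the component of field-space degree $p-k+1$ of the generating Stokes identity then produces the coefficient $k$.

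\textbf{Consequently (iv) fails as you set it up.} With one slot per face and only $\mathbb T_F$, the vertical parts of $D\mathcal T(\beta)=\mathcal T(\dd_{\Y}\beta)$ already agree exactly. The equation therefore reduces to $\epsilon_{r+1}\sum_{G\prec F}[F:G]\rho_{FG}^*\mathbb T_G\beta=0$. This is false: for $\beta=\alpha_{\Y}$ it is the nonzero Hamiltonian defect of (ii). It is also degree-inconsistent. No constants $\epsilon_r$ indexed by codimension can repair it, and factorials cannot absorb signs in any case.

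The missing idea is a second grading by the number $k$ of source insertions. Take families $\eta_F^k\in\Omega^{p-k}(\F_F)$, $k\ge0$, with
$(\mathbf D\eta)_F^k=(-1)^{\dim F}\delta\eta_F^k+\sum_{G\prec F}[F:G]\rho_{FG}^*\eta_G^{k-1}$, and set $\mathbf T(\chi)_F^k=\frac1{k!}\mathsf T_F^k\chi$. The factorial sits in $k$, not in the codimension, and is forced by the coefficient $k$ above. Multiplying that identity by $(-1)^m/k!$ is literally $\mathbf D\mathbf T(\chi)=\mathbf T(\dd_{\Y}\chi)$.

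With this complex, $\mathbf D^2=0$ is immediate and needs no induction on codimension. The mixed terms carry $(-1)^m$ and $(-1)^{m-1}$ and cancel, and the purely horizontal term vanishes by the incidence lemma.

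\textbf{Two smaller points.} First, ``exactly two intermediate faces'' needs the global embeddedness hypothesis, and connectedness of $H$ to spread the local sign; the local Boolean structure alone is not enough. Second, in (vi), $\Q_F^2=0$ only makes $\{S_F,S_F\}_F$ locally constant. It vanishes because its degree $r+1$ is nonzero.
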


The contribution is the explicit incidence-level organization, the face-poset coherence statement, and its use as a base case for comparison with Dirac reduction and generalized-corner geometry; the table below delineates it from the established literature.

\subsection*{Provenance and novelty audit}

\begin{center}
\small
\begin{tabular}{>{\raggedright\arraybackslash}p{0.29\textwidth} >{\raggedright\arraybackslash}p{0.62\textwidth}}
\toprule
Component & Status in this article\\
\midrule
BV--BFV extension to higher-codimension strata and AKSZ examples & Established in \cite{CMRClassical}; used as the formal-geometric foundation.\\
Facewise transgression formulas & Standard AKSZ Cartan calculus, rewritten with explicit incidence numbers and proved in the conventions of this paper.\\
Face total complex, normalized transgression cochain map, and corner-square identity & Developed here from standard AKSZ Cartan calculus; the global incidence organization and explicit face signs are the contribution, and the codimension-two compatibility is the transgressed form of $\partial_{\mathrm{face}}^2=0$.\\
Strictification from a reduced Poisson graph & Recorded in \cite[Remark~3.14]{CFT2026}; the
reduction hypotheses are those of \cite[Section~3.1.2 and Definition~3.13]{CFT2026}.  Restated
here for a transgressed AKSZ descendant, so as to be comparable with the face-incidence data.\\
Reduced Poisson corner of Palatini--Cartan gravity & Input from Cattaneo--Fila-Robattino--Tecchiolli \cite{CFT2026}, not reproved here.\\
AKSZ theory on Joyce generalized corners & Open problem; only obstructions and two candidate strategies are formulated.\\
\bottomrule
\end{tabular}
\end{center}

\subsection*{Relation to \texorpdfstring{\cite{CFT2026}}{the reduced Dirac structure paper}}

Because \cite{CFT2026} appeared while the present article was being completed, and because the
two overlap in more than one place, it is worth stating the relation explicitly rather than
leaving it to the table above.

Taken from \cite{CFT2026}: the degree conventions for $\bfV{k}$ data and their relaxed version
(their Definitions~2.16 and~2.18); the defect one-form $\check\alpha=\iota_{\Q}\omega-\delta S$
and the identity $\delta\check\alpha=-\cL_{\Q}\omega$ (their Remark~2.19), of which
\cref{cor:sympdefect} is the facewise refinement; the reduction of a pre-Dirac corner structure
and the notion of a theory good at a corner (their Section~3.1.2 and Definition~3.13), restated
here as \cref{ass:clean}; the passage from a reduced Poisson bivector to a strict $\bfV{2}$
theory (their Remark~3.14), which is \cref{prop:strictify}; the superfield description of the
four-dimensional $BF$ corner theory (their Section~3.2.2), recalled in \cref{sec:bf}; and, of
course, the reduced Poisson geometry of the Palatini--Cartan corner, which is the substantive
theorem of that paper and is not reproved here.  The characterization of Poisson graphs among
Dirac structures (\cref{lem:poisson-graph}) is classical \cite{Courant} and appears as their
Example~2.29; it is included only to keep the article self-contained.

Not in \cite{CFT2026}: the organization of the descent over the entire face poset, with the
single boundary pullback of the $k$-extended axiom resolved into a signed sum over connected
codimension-one faces; the total complex of \cref{thm:ordered-transgression} together with the
observation that the normalization $1/k!$ is forced by the coefficient $k$ in the
transgression--Stokes formula; the resulting corner-square identity; and the explicit
verification of all four codimension-two cancellations on $\Gamma\times[0,1]^2$
(\cref{prop:four-corners}).  The discussion of Joyce generalized corners in
\cref{sec:gcorners} is likewise independent of \cite{CFT2026}, which works throughout with
ordinary strata of codimension at most two.

\subsection*{Scope and analytic status}

All identities below are proved in the algebra of local differential forms on mapping spaces, in the sense of the standard AKSZ--BV--BFV Cartan calculus.  We make the following standing hypothesis.

\begin{assumption}[Formal mapping-space hypothesis]\label{ass:formal}
For every face $F$ the graded mapping space $\F_F=\Map(T[1]F,\Y)$ admits the evaluation map, restriction maps, contractions by the lifted source and target vector fields, and fiber integration of local forms.  These operations satisfy graded Cartan calculus and Stokes' formula.  In particular, all formulas may be read either:
\begin{enumerate}[label=(\alph*)]
\item formally, as identities between local functionals and local field-space forms; or
\item on a regular finite-dimensional approximation on which the indicated fiber integrals and pullbacks are defined.
\end{enumerate}
\end{assumption}

Finally, a bibliographic convention: the numbering of sections, definitions, propositions and
displayed equations quoted from \cite{CMRClassical} is that of the published version,
\emph{Commun.\ Math.\ Phys.} \textbf{332} (2014), which differs in places from the numbering of
the preprint arXiv:1201.0290.

The proofs are therefore complete at the classical formal-geometric level.  They do not assert that the full spaces of smooth fields carry globally defined Fr\'echet symplectic manifolds, nor that singular quotients occurring in gravity are smooth without the explicit regularity assumptions stated later.

\section{Ordinary corners and the face complex}

\subsection{Manifolds with faces}

We use a slightly restrictive but convenient class of manifolds with corners.

\begin{definition}\label{def:mwf}
A compact $n$-dimensional \emph{manifold with faces} is a compact smooth manifold with
corners $M$, in the sense of \cite{Melrose,JoyceCorners}, in which every boundary
hypersurface is embedded.
The closure of a connected stratum will be called a \emph{face}.  We write $\Faces_r(M)$ for the set of codimension-$r$ faces and $\Faces(M)=\coprod_r\Faces_r(M)$; if $M$ is connected then $\Faces_0(M)=\{M\}$, so that $M$ itself is a face of codimension zero.
Since $M$ is compact, each $\Faces_r(M)$ is finite, so all sums over faces below are finite.
\end{definition}

\begin{remark}\label{rem:depth}
Embeddedness implies that a point of depth $r$ lies in exactly $r$ distinct boundary
hypersurfaces: locally it lies on $r$ local boundary hypersurfaces, and embeddedness
prevents two of them from belonging to the same global one.  In particular a
codimension-two face is contained in exactly two codimension-one faces.

The same count applies inside every face, because a face of a manifold with faces is
again one.  Indeed, a face $F$ is a connected component of an intersection
$H_{i_1}\cap\cdots\cap H_{i_k}$ of distinct boundary hypersurfaces, hence a compact
manifold with corners, and its boundary hypersurfaces are the components of the
intersections $F\cap H_j$ with $j\notin\{i_1,\ldots,i_k\}$.  Were one of these
non-embedded in $F$, some $p\in F$ would lie on two local boundary hypersurfaces of $F$
contained in the same $F\cap H_j$, hence on two local boundary hypersurfaces of $M$
contained in the same $H_j$, which is excluded.  Consequently a codimension-two face of
$F$ lies in exactly two codimension-one faces of $F$; this is the count used in
\cref{lem:incidence}.
\end{remark}

This includes the local model
\[
 [0,\infty)^r\times\mathbb R^{n-r},
\]
and is sufficient for the field-theoretic applications considered here.  Every depth-$r$ point has normal monoid $\mathbb N^r$.

If $G$ is a codimension-one face of a face $F$, we write $G\prec F$.  Since $M$ is oriented and every face is co-orientable in $M$, every face is orientable; \emph{we fix once and for all an orientation of every face of $M$}, that of $M$ itself being the given one.  An orientation of $F$ induces an orientation of $G$ by the outward-normal-first convention.  Let
\[
 [F:G]\in\{+1,-1\}
\]
be the incidence number comparing this induced orientation with the fixed orientation of $G$.  The conventions are collected in \cref{app:signs}.

\begin{definition}\label{def:facechain}
The \emph{face chain complex} of $M$ is the free abelian group
\[
 C_r^{\mathrm{face}}(M)=\mathbb Z\langle \Faces_r(M)\rangle
\]
with boundary
\[
 \partial_{\mathrm{face}}F
 =\sum_{G\prec F}[F:G]G.
\]
\end{definition}

\begin{lemma}[Incidence cancellation]\label{lem:incidence}
For every codimension-two face $H$ of a face $F$,
\[
 \sum_{G:\,H\prec G\prec F}[F:G][G:H]=0.
\]
Consequently, $\partial_{\mathrm{face}}^2=0$.
\end{lemma}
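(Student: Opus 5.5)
The plan is to reduce the identity to a local computation at a single interior point of $H$. By \cref{rem:depth}, applied inside the face $F$ (itself a manifold with faces), $H$ lies in exactly two codimension-one faces $G_1,G_2$ of $F$, so the sum has exactly two terms. It therefore suffices to show
\[
 [F:G_1][G_1:H]=-[F:G_2][G_2:H].
\]
All four incidence numbers compare induced orientations with fixed ones, and each is locally constant on the connected faces. So the comparison can be made at one point $p$ in the open stratum of $H$.

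Near such a $p$ I choose a chart of $F$ modelled on $[0,\infty)^2\times\mathbb R^{m-2}$, with $m=\dim F$. In this chart $G_1=\{x_1=0\}$, $G_2=\{x_2=0\}$ and $H=\{x_1=x_2=0\}$. Let $\nu_1=-\partial_{x_1}$ and $\nu_2=-\partial_{x_2}$ be the outward normals, and let $(e_3,\dots,e_m)$ be an oriented frame of $T_pH$ for the fixed orientation of $H$. Write $\epsilon=\pm1$ for the sign such that $(\nu_1,\nu_2,e_3,\dots,e_m)$ is $\epsilon$ times the orientation of $F$.

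Using the outward-normal-first convention:
\begin{enumerate}[label=(\alph*)]
\item The induced orientation on $G_1$ is $[\nu_2,e_3,\dots]$ twisted by $\epsilon$. Hence $[F:G_1]$ equals $\epsilon$ times the comparison of $(\nu_2,e)$ with the fixed orientation of $G_1$.
\item Inside $G_1$, $\nu_2$ is the outward normal to $H$. So $[G_1:H]$ compares $(e)$ with the induced orientation obtained after stripping $\nu_2$. The two hidden comparison signs coincide and cancel, giving $[F:G_1][G_1:H]=\epsilon$.
\item Symmetrically, $[F:G_2][G_2:H]$ equals the sign of $(\nu_2,\nu_1,e)$ relative to $F$, which is $-\epsilon$ because swapping the first two vectors changes orientation.
\end{enumerate}
The two products are opposite, which is the claimed identity. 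The bookkeeping is cleanest if the fixed orientations are written as $\sigma_{G_i}[\,\cdot\,]$ and the signs $\sigma_{G_i}$ are tracked explicitly, using the appendix conventions of \cref{app:signs}.

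The only delicate point is making sure that the outward normal of $H$ inside $G_1$ at $p$ really is $\nu_2$, up to a positive multiple and modulo $T_pH$. In the chart this is immediate.

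For the consequence, expand the square of the boundary:
\[
 \partial^2_{\mathrm{face}}F=\sum_{H}\Bigl(\sum_{H\prec G\prec F}[F:G][G:H]\Bigr)H.
\]
Here $H$ ranges over the codimension-two faces of $F$: any $H\prec G\prec F$ is such a face. Each coefficient vanishes by the identity just proved, so $\partial_{\mathrm{face}}^2=0$.
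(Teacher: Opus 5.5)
Your proof is correct and follows the paper's argument. Both use \cref{rem:depth} to get exactly two intermediate faces, work in a corner chart at an interior point of $H$ where the orders $(\nu_1,\nu_2)$ and $(\nu_2,\nu_1)$ induce opposite orientations (the paper does this computation with explicit forms in \cref{app:cornersign}), and conclude by local constancy on the connected faces; your explicit tracking of the signs $\sigma_{G_i}$ only makes visible why the fixed orientations of the intermediate faces drop out of each product $[F:G_i][G_i:H]$, which the paper leaves implicit.
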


\begin{proof}
By \cref{rem:depth} there are exactly two faces $G$ with $H\prec G\prec F$.  Put $m=\dim F$.  Near an interior point of $H$, choose oriented corner coordinates
\[
 (x_1,x_2,y_1,\ldots,y_{m-2})\in[0,\infty)^2\times\mathbb R^{m-2}
\]
on $F$, so that the two intermediate faces are $G_1=\{x_1=0\}$ and $G_2=\{x_2=0\}$.  Applying the outward-normal-first convention first along $x_1$ and then along $x_2$ gives the opposite orientation on $H$ from applying it in the reverse order; the computation is carried out in coordinates in \cref{app:cornersign}.  Thus
\[
 [F:G_1][G_1:H]=-[F:G_2][G_2:H]
\]
at that point.  Both sides are locally constant on $H$, and $H$ is connected because faces are closures of connected strata, so the identity holds on all of $H$ and the displayed sum vanishes.
\end{proof}

\subsection{Ordered corners}

For higher-codimension bookkeeping it is useful not to quotient immediately by permutations of the normal directions.

\begin{definition}\label{def:ordered-corner}
The \emph{ordered codimension-$r$ corner space} is the disjoint union
\[
 \partial^{[r]}M
 =\coprod_{(H_1,\ldots,H_r)} H_1\cap\cdots\cap H_r,
\]
where the union runs over ordered $r$-tuples of distinct boundary hypersurfaces whose intersection
is nonempty and clean of codimension exactly $r$; by \cref{rem:depth} this is automatic under
the embeddedness hypothesis of \cref{def:mwf}.  The symmetric group $S_r$ acts by permuting the order, and we write $\partial^{(r)}M=\partial^{[r]}M/S_r$ for the unordered corner space.
\end{definition}

Because the hypersurfaces in a tuple are distinct, the $S_r$-action is free and
\[
 \partial^{[r]}M\longrightarrow\partial^{(r)}M
\]
is a principal $S_r$-bundle.  The ordered version remembers precisely the orientation sign
lost by passing too early to the unordered corner.

The ordered corner space is not used in the constructions below: the total complex of
\cref{thm:ordered-transgression} is indexed by the face poset $\Faces(M)$, and every
cancellation is phrased through the incidence numbers $[F:G]$.  It is recorded here
because it is the geometric object that makes the sign in \cref{lem:incidence}
transparent, and because the passage to generalized corners in \cref{sec:gcorners} must
replace it rather than the incidence numbers themselves.

\begin{remark}[Comparison with the iterated boundary]\label{rem:iterated}
Melrose and Joyce define the corner spaces by iterating the boundary, a point of $\partial^rM$ being a point of $M$ together with an ordered choice of $r$ local boundary hypersurfaces through it \cite{Melrose,JoyceCorners,JoyceGCorner}.  Under the embeddedness hypothesis of \cref{def:mwf} every local boundary hypersurface determines a unique global one, and the two constructions agree: $\partial^rM\cong\partial^{[r]}M$ as manifolds with faces, compatibly with the $S_r$-actions.  We use the intersection description because the incidence numbers are read off from it directly.
\end{remark}

\begin{example}[The square]
For $M=[0,1]^2$, every vertex appears twice in $\partial^{[2]}M$: once for each order of the two incident edges.  The transposition in $S_2$ reverses the induced orientation sign.  This is the smallest geometric model for the corner-square identity proved below.
\end{example}

\subsection{Stokes' formula on the face complex}

Every face is compact, so no support condition is needed: for $F\in\Faces(M)$ and $\eta\in\Omega^{\dim F-1}(F)$, Stokes' formula on a manifold with corners \cite{Melrose} reads
\begin{equation}\label{eq:face-stokes}
 \int_F \dd_F\eta
 =\sum_{G\prec F}[F:G]\int_G\eta|_G,
\end{equation}
the sum being over the codimension-one faces of $F$ and $\eta|_G$ denoting the pullback along $G\hookrightarrow F$.  Applying \eqref{eq:face-stokes} twice produces no codimension-two contribution, by \cref{lem:incidence}.  The main point of this paper is that AKSZ transgression retains this face-complex structure on the infinite-dimensional mapping spaces.

\section{Hamiltonian dg targets and AKSZ transgression}\label{sec:transgression}

\subsection{Target data}

Let $n$ be the dimension of the bulk source.

\begin{definition}\label{def:target}
An \emph{exact Hamiltonian dg symplectic target of AKSZ degree $n-1$} is a quadruple
\[
 (\Y,\omega_{\Y}=\dd_{\Y}\alpha_{\Y},\Q_{\Y},\Theta)
\]
such that:
\begin{enumerate}[label=(\alph*)]
\item $\Y$ is a graded manifold;
\item $\omega_{\Y}$ is a symplectic two-form of internal degree $n-1$;
\item $\Q_{\Y}$ is a degree-$1$ cohomological vector field;
\item $\iota_{\Q_{\Y}}\omega_{\Y}=\dd_{\Y}\Theta$, where $\Theta$ has degree $n$;
\item $\{\Theta,\Theta\}_{\Y}=0$.
\end{enumerate}
\end{definition}

Given (d), condition (e) is equivalent to $\Q_{\Y}^2=0$: the vector field $\tfrac12[\Q_{\Y},\Q_{\Y}]$ is Hamiltonian with Hamiltonian $\tfrac12\{\Theta,\Theta\}_{\Y}$, and since $\omega_{\Y}$ is nondegenerate it vanishes precisely when $\{\Theta,\Theta\}_{\Y}$ is constant; as this function has degree $n+1\neq0$, the constant is zero.  Exactness is convenient because the BV--BFV boundary primitive is obtained by transgressing $\alpha_{\Y}$.

\subsection{Facewise mapping spaces}

Let $F$ be an oriented face of codimension $r$ and dimension $m=n-r$.  Set
\[
 \F_F:=\Map(T[1]F,\Y).
\]
The source de Rham vector field on $T[1]F$ will be denoted $D_F$.  It lifts by precomposition to a vector field $\widehat D_F$ on $\F_F$, and the target vector field lifts by postcomposition to $\check\Q_{\Y}$.  We define
\[
 \Q_F:=\widehat D_F+\check\Q_{\Y};
\]
that $\Q_F^2=0$ is proved in \cref{prop:Q-square} below.  Let
\[
 \ev_F:\F_F\times T[1]F\longrightarrow\Y,
 \qquad
 p_F:\F_F\times T[1]F\longrightarrow\F_F
\]
be evaluation and projection.  Finally, for an incidence $G\prec F$, restriction of maps defines
\[
 \rho_{FG}:\F_F\longrightarrow\F_G.
\]

\subsection{Transgression and Stokes calculus}

For a target differential form $\chi\in\Omega^p(\Y)$ define its ultra-local transgression by
\[
 \mathsf T_F^0\chi:=(p_F)_*\ev_F^*\chi,
\]
and for $k\geq 0$ set
\begin{equation}\label{eq:Tk}
 \mathsf T_F^k\chi:=\iota_{\widehat D_F}^{\,k}\mathsf T_F^0\chi.
\end{equation}
This is the local-form convention of \cite[Appendix D]{CMRClassical}, applied face by face.  The superscript $k$ records the number of source differentials inserted into the target form; it is not a power of the fiber-integration operator.

\begin{lemma}[Transgression--Stokes identity]\label{lem:transgression-stokes}
For every homogeneous $\chi\in\Omega^p(\Y)$ and every $k\geq 0$,
\begin{equation}\label{eq:transgression-stokes}
 \delta\bigl(\mathsf T_F^k\chi\bigr)
 =(-1)^m\left(
 \mathsf T_F^k(\dd_{\Y}\chi)
 -k\sum_{G\prec F}[F:G]\rho_{FG}^*\mathsf T_G^{k-1}\chi
 \right),
\end{equation}
where the boundary sum is absent for $k=0$.  Moreover,
\begin{align}
 \iota_{\widehat D_F}\mathsf T_F^k\chi&=\mathsf T_F^{k+1}\chi,\label{eq:insertD}\\
 \cL_{\widehat D_F}\mathsf T_F^k\chi
 &=(-1)^m\sum_{G\prec F}[F:G]\rho_{FG}^*\mathsf T_G^k\chi.\label{eq:LieD}
\end{align}
\end{lemma}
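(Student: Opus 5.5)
We prove \eqref{eq:insertD} first, then \eqref{eq:LieD}, and finally deduce \eqref{eq:transgression-stokes} from these two and the graded Cartan identity $\delta\iota_{\widehat D_F}+\iota_{\widehat D_F}\delta=\cL_{\widehat D_F}$ on $\F_F$.

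\emph{Step 1 (definitional identity).} Formula \eqref{eq:insertD} holds by definition \eqref{eq:Tk}, since $\iota_{\widehat D_F}\iota_{\widehat D_F}^{\,k}=\iota_{\widehat D_F}^{\,k+1}$. Because $\widehat D_F$ is odd of degree $1$, it is worth noting that $\iota_{\widehat D_F}$ is an even operator on the bigraded complex. Consequently $\iota_{\widehat D_F}^{\,k}\neq0$ in general, and $[\cL_{\widehat D_F},\iota_{\widehat D_F}]=\iota_{[\widehat D_F,\widehat D_F]}=0$, because $D_F^2=0$ on $T[1]F$.

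\emph{Step 2 (the Lie derivative along the source field).} The vector field $\widehat D_F$ is the lift of $D_F$ by precomposition. On the product we therefore have $\cL_{\widehat D_F}\ev_F^*=\pm\cL_{D_F}\ev_F^*$ (up to the product sign convention), because the evaluation map is invariant under the diagonal action. Fiber integration over $T[1]F$ intertwines $\cL_{D_F}$, which acts in the fiber direction, with the Berezinian form of Stokes. Under \cref{ass:formal}, the fiber integral of a $D_F$-derivative is the boundary integral \eqref{eq:face-stokes}, since $\int_{T[1]F}D_F(\cdot)$ is $\int_F\dd_F(\cdot)$. This produces
\[
\cL_{\widehat D_F}\mathsf T_F^0\chi=(-1)^m\sum_{G\prec F}[F:G]\rho_{FG}^*\mathsf T_G^0\chi,
\]
where the sign $(-1)^m$ arises from moving $D_F$ past the fiber integration of odd dimension $m$ in the odd directions. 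Applying $\iota_{\widehat D_F}^{\,k}$, which commutes with $\cL_{\widehat D_F}$ by Step 1, gives \eqref{eq:LieD}. Here we also use that $\rho_{FG}$ intertwines $\widehat D_F$ with $\widehat D_G$ on the restricted forms, so that $\iota_{\widehat D_F}\rho_{FG}^*=\rho_{FG}^*\iota_{\widehat D_G}$.

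\emph{Step 3 (the field-space differential).} For $k=0$, $\delta$ commutes with $\ev_F^*$ up to the target differential, and with $(p_F)_*$ up to $(-1)^m$. Hence $\delta\mathsf T_F^0\chi=(-1)^m\mathsf T_F^0\dd_\Y\chi$; there is no boundary term, because $\delta$ acts only in the field directions. For general $k$ we argue by induction, using $\delta\iota^k=\iota^k\delta+\sum_{j}\iota^{j}[\delta,\iota]\iota^{k-1-j}$ with $[\delta,\iota_{\widehat D_F}]=\cL_{\widehat D_F}$. Each of the $k$ commutator terms, by \eqref{eq:LieD} and the commutation of $\iota$ with $\cL$ and with $\rho^*$, equals $\rho_{FG}^*\mathsf T_G^{k-1}\chi$ summed with signs. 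This explains the coefficient $k$ and the relative minus sign in \eqref{eq:transgression-stokes}.

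\textbf{Main obstacle.} The difficulty is sign bookkeeping. Three points need care: the parity of $\iota_{\widehat D_F}$ relative to $\delta$ in the bigraded convention; the sign $(-1)^m$ from commuting $\delta$ and $D_F$ past $(p_F)_*$; and matching the outward-normal-first convention with the Berezin boundary term, so that the incidence numbers $[F:G]$ appear exactly. We will fix these once by checking them in the local model $[0,\infty)\times\mathbb R^{m-1}$ with the conventions of \cref{app:signs}, and then propagate them.
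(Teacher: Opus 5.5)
Proving \eqref{eq:LieD} first and then \eqref{eq:transgression-stokes} reverses the paper's order, and that order is viable. But Step 3 as written gives the wrong sign of the boundary term, because the Cartan identity is misstated. Since $\widehat D_F$ is odd, $\iota_{\widehat D_F}$ is an even derivation and $\delta$ an odd one. So the anticommutator $\delta\iota_{\widehat D_F}+\iota_{\widehat D_F}\delta$ in your opening line is not a derivation at all: on a product $ab$ it leaves cross terms such as $2\,\delta a\,\iota_{\widehat D_F}b$.

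The Lie derivative is the commutator. In the convention of \cref{app:signs}, which is the one in which \eqref{eq:LieD} carries the sign $+(-1)^m$, it is $\cL_{\widehat D_F}=\iota_{\widehat D_F}\delta-\delta\iota_{\widehat D_F}$. Thus $[\delta,\iota_{\widehat D_F}]=-\cL_{\widehat D_F}$, not $+\cL_{\widehat D_F}$ as in your Step 3. With your identification, inserting \eqref{eq:LieD} into the telescoping sum yields $\delta\mathsf T_F^k\chi=(-1)^m\bigl(\mathsf T_F^k\dd_\Y\chi+k\sum_{G\prec F}[F:G]\rho_{FG}^*\mathsf T_G^{k-1}\chi\bigr)$. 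So the ``relative minus sign'' is not explained.

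The repair is to use one convention in both steps. With $\cL_{\widehat D_F}=\iota_{\widehat D_F}\delta-\delta\iota_{\widehat D_F}$, the operator acts on superfield integrands as $+\dd$: it sends $\Phi^a\mapsto\dd\Phi^a$ and $\delta\Phi^a\mapsto-\delta\dd\Phi^a=\dd\delta\Phi^a$. Also $\iota_{\widehat D_F}$ commutes with $(p_F)_*$, while $\delta\circ(p_F)_*=(-1)^m(p_F)_*\circ\delta$. Hence Step 2 gives \eqref{eq:LieD} with exactly the stated sign, which also fixes the undetermined ``$\pm$'' in your diagonal-action argument. Then
\[
 \delta\,\iota_{\widehat D_F}^{\,k}-\iota_{\widehat D_F}^{\,k}\,\delta
 =-\sum_{j=0}^{k-1}\iota_{\widehat D_F}^{\,j}\,\cL_{\widehat D_F}\,\iota_{\widehat D_F}^{\,k-1-j}.
\]
Applied to $\mathsf T_F^0\chi$, each of the $k$ terms contributes $-(-1)^m\sum_{G\prec F}[F:G]\rho_{FG}^*\mathsf T_G^{k-1}\chi$, using $\iota_{\widehat D_F}\rho_{FG}^*=\rho_{FG}^*\iota_{\widehat D_G}$. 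Together with your (correct) base case, this gives \eqref{eq:transgression-stokes}.

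With that correction, the two routes compare as follows. The paper proves \eqref{eq:transgression-stokes} first, in \cref{app:transgression}, via the generating function $\mathcal T_F(\chi)=\sum_k\frac1{k!}\mathsf T_F^k\chi$, obtained by substituting $(\delta+\dd)\Phi^a$ for $\dd u^a$. This substitution is a morphism of dg algebras, so one use of $\delta\circ(p_F)_*=(-1)^m(p_F)_*\circ\delta$ and one use of Stokes handle all $k$ at once. The coefficient $k$ then appears as $k!/(k-1)!$, which shows why the $1/k!$ normalization of \cref{thm:ordered-transgression} is forced. The paper then deduces \eqref{eq:LieD} from \eqref{eq:transgression-stokes} at $k$ and $k+1$, the boundary coefficients combining as $(k+1)-k=1$.

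Your route puts all the face geometry into the single identity \eqref{eq:LieD}. It obtains $k$ as the number of insertion positions in the telescoping sum, using $[\cL_{\widehat D_F},\iota_{\widehat D_F}]=\iota_{[\widehat D_F,\widehat D_F]}=0$. It is closer to the Cattaneo--Mnev--Reshetikhin Cartan calculus. The cost is that the sign convention enters at two separate places, Step 2 and the Cartan commutator, and that is exactly where your version went wrong.
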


\begin{proof}
The first identity is proved in \cref{app:transgression}, by a self-contained computation in
superfields; for a source with a single boundary component it is the Cartan-calculus formula of
Cattaneo--Mnev--Reshetikhin \cite[Appendix~D, Eq.~(77)]{CMRClassical}, in the same convention for
\(
\mathsf T_F^k=\iota_{\widehat D_F}^{\,k}(p_F)_*\ev_F^*
\)
and for the sign contributed by integration over an $m$-dimensional source. For a manifold with faces, the restriction to the oriented boundary decomposes into its connected codimension-one faces. Hence the single boundary pullback in that formula becomes
\[
 \sum_{G\prec F}[F:G] \, \rho_{FG}^*\mathsf T_G^{k-1}\chi,
\]
by Stokes' formula \eqref{eq:face-stokes}; the factor $k$ is produced by the generating-function
argument of \cref{app:transgression} and agrees with the one in Eq.~(77).

Identity \eqref{eq:insertD} is the definition \eqref{eq:Tk}. Since $\widehat D_F$ has degree one, our Cartan convention is
\[
 \cL_{\widehat D_F}=\iota_{\widehat D_F}\delta-\delta\iota_{\widehat D_F}.
\]
Substituting \eqref{eq:transgression-stokes} for $k$ and $k+1$, the interior terms cancel and the two boundary coefficients combine as $(k+1)-k=1$. Projectability of the source de Rham vector field under restriction then yields \eqref{eq:LieD}; equivalently, it is the facewise version of \cite[Appendix~D, Eq.~(83)]{CMRClassical}.
\end{proof}

\begin{lemma}[Lifted target calculus]\label{lem:target-lift}
Let $V$ be a homogeneous vector field on $\Y$ and $\check V$ its pointwise lift to $\F_F$.  Then
\begin{align}
 \iota_{\check V}\mathsf T_F^k\chi
 &=(-1)^{(|V|+1)m}\mathsf T_F^k(\iota_V\chi),\label{eq:target-contraction}\\
 \cL_{\check V}\mathsf T_F^k\chi
 &=(-1)^{|V|m}\mathsf T_F^k(\cL_V\chi),\label{eq:target-Lie}\\
 [\widehat D_F,\check V]&=0.\label{eq:commuting-lifts}
\end{align}
\end{lemma}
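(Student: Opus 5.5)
The plan is to prove the three identities in reverse order, starting from the simplest structural fact. The lifted vector fields act on the mapping space $\F_F=\Map(T[1]F,\Y)$ in two different ways: $\widehat D_F$ by precomposition with the source vector field $D_F$, and $\check V$ by postcomposition with the target vector field $V$. For \eqref{eq:commuting-lifts}, write both as derivations of the algebra of functions on $\F_F$. It is enough to evaluate them on the generating functions $\ev_F^*f$, with $f$ a function on $\Y$ and a point of $T[1]F$ held fixed. On $\F_F\times T[1]F$ one has
\[
\widehat D_F\,\ev^*f=\pm D_F\,\ev^*f,
\qquad
\check V\,\ev^*f=\ev^*(Vf).
\]
The right-hand sides involve derivatives in independent factors of the product: $D_F$ acts only on the $T[1]F$ variables, while $Vf$ is pulled back pointwise. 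Hence the graded commutator vanishes on generators. A graded commutator of derivations is again a derivation, so it vanishes identically.

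For \eqref{eq:target-contraction} I will first treat $k=0$. The key observation is that $\check V$, lifted to $\F_F\times T[1]F$ with zero component along $T[1]F$, is $\ev_F$-related to $V$. This gives
\[
\iota_{\check V}\ev_F^*\chi=\ev_F^*\iota_V\chi.
\]
Next I commute $\iota_{\check V}$ past the fiber integration $(p_F)_*$. Fiber integration over an $m$-dimensional odd-graded source carries the Koszul sign rule: it shifts form degree by $-m$ and internal degree accordingly, and $\iota_{\check V}$ has total parity $|V|+1$ (form degree $-1$, internal degree $|V|$). Moving $\iota_{\check V}$ across an operator of parity $m$ should therefore produce exactly $(-1)^{(|V|+1)m}$. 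For $k\ge1$ I will use \eqref{eq:Tk}, $\mathsf T^k_F=\iota_{\widehat D_F}^k\mathsf T^0_F$. The plan is to show that $\iota_{\check V}$ graded-commutes with $\iota_{\widehat D_F}$. Two contractions anticommute up to the Koszul sign of their degrees, and the resulting sign must combine with the sign conventions of $\iota_{\widehat D_F}$ so that the total is independent of $k$. This needs checking against \cref{app:signs}. If the signs do not cancel directly, I would instead redefine the comparison inside the superfield formalism of \cref{app:transgression}, where $\mathsf T^k_F$ is a single fiber integral with $D$ inserted $k$ times. There the claim reduces to $\ev$-relatedness plus one commutation past the integral.

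For \eqref{eq:target-Lie} I will use the Cartan formula $\cL_{\check V}=[\iota_{\check V},\delta]$ together with \cref{lem:transgression-stokes}. Applying $\delta$ to $\mathsf T^k_F\chi$ produces an interior term $\mathsf T^k_F\dd_\Y\chi$ and boundary terms $\rho^*_{FG}\mathsf T^{k-1}_G\chi$. Applying $\iota_{\check V}$ and using \eqref{eq:target-contraction}, both on $F$ and, via compatibility of lifts with restriction $\rho_{FG}$, on the faces $G$, I expect two things. First, the boundary terms from $\iota\delta$ and $\delta\iota$ cancel. This is a sign check, because the face $G$ has dimension $m-1$, so its contraction sign differs by $(-1)^{|V|+1}$, and the factor $(-1)^m$ versus $(-1)^{m-1}$ from Stokes compensates it. Second, the interior terms assemble into $(-1)^{|V|m}\mathsf T^k_F(\cL_V\chi)$.

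I expect the main obstacle to be the sign bookkeeping. The difficulty is making the boundary cancellation consistent with the factor $k$ in \eqref{eq:transgression-stokes} when contraction is moved past $\iota_{\widehat D_F}^k$. Everything else is naturality of evaluation and restriction.
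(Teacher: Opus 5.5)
Your proposal is correct. For \eqref{eq:commuting-lifts} and \eqref{eq:target-contraction} it is essentially the paper's argument: precomposition versus postcomposition, and $\ev_F$-relatedness of $\check V$ and $V$ followed by one Koszul sign across $(p_F)_*$. The sign you left open for $k\geq1$ is immediate. $\iota_{\widehat D_F}$ has total parity $|\widehat D_F|+1=0$ (as noted in \cref{app:transgression}), and contractions always graded-commute. Hence $\iota_{\check V}\iota_{\widehat D_F}^{\,k}=\iota_{\widehat D_F}^{\,k}\iota_{\check V}$ with no sign, and the superfield fallback is not needed.

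The genuine difference is \eqref{eq:target-Lie}.

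\emph{The paper's route.} It treats $\cL_{\check V}$ exactly like the contraction. $\ev_F$ intertwines $\cL_V$ with $\cL_{\check V}$, crossing $(p_F)_*$ costs $(-1)^{|V|m}$, and crossing $\iota_{\widehat D_F}^{\,k}$ costs nothing because $[\cL_{\check V},\iota_{\widehat D_F}]=\iota_{[\check V,\widehat D_F]}=0$. This last step is where \eqref{eq:commuting-lifts} is actually used.

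\emph{Your route.} You write $\cL_{\check V}=\iota_{\check V}\delta+(-1)^{|V|}\delta\iota_{\check V}$ and feed in \cref{lem:transgression-stokes}, together with \eqref{eq:target-contraction} on $F$ and on each $G\prec F$. This closes.
\begin{itemize}
\item The interior terms give $(-1)^{|V|m}\mathsf T_F^k(\iota_V\dd_{\Y}\chi+(-1)^{|V|}\dd_{\Y}\iota_V\chi)=(-1)^{|V|m}\mathsf T_F^k(\cL_V\chi)$.
\item The factor $k$ is common to both boundary contributions. Their coefficients are $-k(-1)^m(-1)^{(|V|+1)(m-1)}$ and $+k(-1)^m(-1)^{(|V|+1)(m+1)}$, which cancel.
\item For this you need that the lifts on $\F_F$ and $\F_G$ are $\rho_{FG}$-related, so that $\iota_{\check V}$ commutes with $\rho_{FG}^*$.
\end{itemize}

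\emph{Comparison.} Your route does not need \eqref{eq:commuting-lifts} at all. It also doubles as a consistency check between the contraction signs and the Stokes signs on faces of adjacent dimension. The paper's route is shorter and purely local: it makes manifest that a pointwise target Lie derivative never sees the boundary, whereas in yours this only emerges as a cancellation.
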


\begin{proof}
Write $\mathsf T_F^k\chi=(p_F)_*\bigl(\iota_{\widehat D_F}^{\,k}\ev_F^*\chi\bigr)$.  Since $\check V$ is a pointwise lift, $\ev_F^*$ intertwines $\iota_V$ with $\iota_{\check V}$ and $\cL_V$ with $\cL_{\check V}$ before fiber integration.  It therefore suffices to commute the insertion past $(p_F)_*$, which lowers form degree by $m$ and therefore has parity $m$.  As operators on forms, $\iota_V$ has parity $|V|+1$ and $\cL_V$ has parity $|V|$; transposing them across $(p_F)_*$ costs $(-1)^{(|V|+1)m}$ and $(-1)^{|V|m}$ respectively.  These are the signs of \cref{app:signs}, and they are unaffected by the $k$ insertions of $\widehat D_F$, which commute with $\check V$ by the last identity.

For \eqref{eq:commuting-lifts}, note that $\widehat D_F$ acts by precomposition with the source de Rham vector field and $\check V$ by postcomposition with $V$.  Precomposition and postcomposition commute on a mapping space, and the graded commutator of the two lifts therefore vanishes irrespective of their parities.
\end{proof}

\subsection{Transgressed data and degrees}

With the notation above we set
\begin{align}
 \omega_F&:=\mathsf T_F^0(\omega_{\Y}),\label{eq:omegaF}\\
 \alpha_F&:=\mathsf T_F^0(\alpha_{\Y}),\label{eq:alphaF}\\
 S_F&:=\mathsf T_F^1(\alpha_{\Y})+\mathsf T_F^0(\Theta).
 \label{eq:SF}
\end{align}
By \cref{lem:transgression-stokes} with $k=0$ and $\omega_{\Y}=\dd_{\Y}\alpha_{\Y}$,
\begin{equation}\label{eq:exactF}
 \omega_F=(-1)^m\delta\alpha_F,
\end{equation}
where $\delta$ denotes the de Rham differential on field space.  In particular $\omega_F$ is closed.

\begin{proposition}[Degree bookkeeping]\label{prop:degrees}
If $F$ has codimension $r$ in $M$, then
\[
 |\omega_F|=|\alpha_F|=r-1,
 \qquad |S_F|=r,
 \qquad |\Q_F|=1.
\]
Thus $r=0$ gives BV degree, $r=1$ gives BFV degree, and $r=2$ gives $\bfV{2}$ degree.
These are the degrees prescribed for a $k$-extended BV theory in
\cite[Definition~3.26]{CMRClassical}; what is added here is that they are realized
facewise by transgression, with the incidence signs of \cref{def:facechain}.
\end{proposition}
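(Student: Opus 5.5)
The proof is a degree count.  We work with the grading in which $T[1]F$ has body coordinates $x^\mu$ of degree $0$ and odd fiber coordinates $\theta^\mu=\dd x^\mu$ of degree $1$.  Then the Berezin measure $\dd^m x\,\dd^m\theta$ has degree $-m$, so fiber integration $(p_F)_*$ lowers internal degree by $m=\dim F=n-r$.  The evaluation pullback $\ev_F^*$ preserves internal degree because $\ev_F$ is a degree-zero map of graded manifolds.  Hence $\mathsf T_F^0$ shifts internal degree by $-m$.

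For the field-space vector fields we use the lifted source vector field $\widehat D_F$.  It is obtained from the degree-one vector field $D_F=\theta^\mu\partial_{x^\mu}$ by precomposition, so it has degree $1$.  Contraction $\iota_{\widehat D_F}$ therefore raises internal degree by $1$, and lowers form degree by one.  By \eqref{eq:Tk}, $\mathsf T_F^k$ shifts internal degree by $k-m$.

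Applying this to the transgressed data gives the form and Hamiltonian degrees.  Since $\omega_{\Y}=\dd_{\Y}\alpha_{\Y}$ and $\dd_{\Y}$ has internal degree $0$ in the bigraded convention of \cref{app:signs}, both $\omega_{\Y}$ and $\alpha_{\Y}$ have degree $n-1$.  Hence
\[
|\omega_F|=|\alpha_F|=(n-1)-m=r-1.
\]
For the action, $\mathsf T_F^1(\alpha_{\Y})$ has degree $(n-1)+1-m=r$, and $\mathsf T_F^0(\Theta)$ has degree $n-m=r$.  Both summands of \eqref{eq:SF} therefore sit in degree $r$, so $S_F$ is homogeneous of degree $r$.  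As a consistency check, \eqref{eq:exactF} is compatible with these values, since $\delta$ preserves internal degree.

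For the cohomological vector field, $\check\Q_{\Y}$ has degree $|\Q_{\Y}|=1$, since postcomposition with a vector field preserves its degree.  Together with $|\widehat D_F|=1$ this gives $|\Q_F|=1$.

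The resulting degrees, $-1$ and $0$ for $r=0$, $0$ and $1$ for $r=1$, and $1$ and $2$ for $r=2$, match \cite[Definition~3.26]{CMRClassical}.  This comparison is immediate.

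The only real subtlety is fixing conventions, namely that internal degree and form degree are kept as separate gradings.  With that convention $(p_F)_*$ shifts internal degree by exactly $-m$, and its form-degree effect only enters the signs in \cref{lem:target-lift}, not the degrees.  I would check this in the appendix conventions by verifying it on one superfield component, before making the count above.
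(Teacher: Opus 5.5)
Your proposal is correct and is essentially the paper's proof: $\mathsf T_F^0$ lowers internal degree by $m=n-r$, each insertion of $\widehat D_F$ raises it by one, and the degrees of $\omega_F$, $\alpha_F$, $S_F$ and $\Q_F$ then follow by direct counting. Your extra justifications make explicit steps that the paper asserts in one line: the Berezin measure has degree $-m$, $\ev_F$ is a degree-zero map, and $\dd_{\Y}$ preserves internal degree.
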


\begin{proof}
The operator $\mathsf T_F^0$ lowers internal degree by $m=n-r$.  Therefore
\[
 |\omega_F|=(n-1)-m=r-1,
 \qquad
 |\alpha_F|=(n-1)-m=r-1.
\]
The insertion $\iota_{\widehat D_F}$ has degree $+1$ in ghost number, so
\[
 |\mathsf T_F^1\alpha_{\Y}|=(n-1)+1-m=r.
\]
Likewise $|\mathsf T_F^0\Theta|=n-m=r$.  Both summands of $S_F$ consequently have degree $r$.  The source and target cohomological vector fields both have degree one, hence so does their sum $\Q_F$.
\end{proof}

\begin{proposition}[Cohomologicality and projectability]\label{prop:Q-square}
For every face $F$ one has $\Q_F^2=0$.  If $G\prec F$, then
\begin{equation}\label{eq:Qproject}
 T\rho_{FG}(\Q_F)=\Q_G.
\end{equation}
\end{proposition}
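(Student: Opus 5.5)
The plan is to expand $\Q_F^2$ in the graded commutator bracket and treat the three resulting terms separately. Since $\Q_F$ is odd,
\[
 \Q_F^2=\tfrac12[\Q_F,\Q_F]=\tfrac12[\widehat D_F,\widehat D_F]+[\widehat D_F,\check\Q_{\Y}]+\tfrac12[\check\Q_{\Y},\check\Q_{\Y}].
\]

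\textbf{The three terms.}
\begin{enumerate}[label=(\alph*)]
\item The middle term vanishes by \eqref{eq:commuting-lifts} of \cref{lem:target-lift}, applied with $V=\Q_{\Y}$.
\item For the first term, $\widehat D_F$ is the lift by precomposition of the source de Rham vector field $D_F$ on $T[1]F$. Lifting by precomposition is a Lie algebra (anti)homomorphism from vector fields on $T[1]F$ to vector fields on $\F_F$. Hence $[\widehat D_F,\widehat D_F]=\pm\widehat{[D_F,D_F]}$, and this is zero because $D_F^2=0$ (the de Rham differential squares to zero).
\item For the last term, the pointwise lift by postcomposition is a Lie algebra homomorphism. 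Hence $[\check\Q_{\Y},\check\Q_{\Y}]=(\,[\Q_{\Y},\Q_{\Y}]\,)\check{}$, and this vanishes because $\Q_{\Y}^2=0$. The latter was derived after \cref{def:target} from condition (e) together with (d) and nondegeneracy.
\end{enumerate}
These three statements about lifts are formal facts about mapping spaces, available under \cref{ass:formal}. If needed, I would verify them in superfield coordinates: a field is a superfield $X^a(u,\theta)$, $\widehat D_F$ acts by $\delta X^a\mapsto \theta^\mu\partial_\mu X^a$, and $\check\Q_{\Y}$ acts by $\delta X^a\mapsto Q^a(X)$.

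\textbf{Projectability.} Let $\iota_{GF}\colon T[1]G\to T[1]F$ be the map induced by the inclusion $G\hookrightarrow F$. Then $\rho_{FG}(\phi)=\phi\circ\iota_{GF}$. I would check the relation separately for each summand of $\Q_F$.
\begin{enumerate}[label=(\alph*)]
\item Postcomposition with $\Q_{\Y}$ commutes with precomposition by $\iota_{GF}$. So $\check\Q_{\Y}$ on $\F_F$ is $\rho_{FG}$-related to $\check\Q_{\Y}$ on $\F_G$; this is immediate.
\item For the source part, the key input is that $D_F$ is tangent to $T[1]G$ and restricts to $D_G$. This holds because the pullback of forms commutes with the exterior derivative, $\iota_{GF}^*\circ \dd_F=\dd_G\circ\iota_{GF}^*$. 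Since precomposition is functorial, it follows that $\widehat D_F$ is $\rho_{FG}$-related to $\widehat D_G$.
\end{enumerate}
Adding the two gives $T\rho_{FG}(\Q_F)=\Q_G$. This is the same projectability already used in the proof of \eqref{eq:LieD}.

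\textbf{Main obstacle.} I expect the main difficulty to be sign and parity bookkeeping rather than any conceptual step. Two points need care. First, the (anti)homomorphism property of the precomposition lift for odd vector fields: I would fix it by the superfield formula and check directly that $\widehat D_F^2X^a=\theta^\mu\theta^\nu\partial_\mu\partial_\nu X^a=0$. Second, the statement that $D_F$ restricts to $D_G$ at boundary points of a manifold with corners. Near $G$, I would take the local coordinates of \cref{lem:incidence}, with $G=\{x_1=0\}$. The functions on $T[1]G$ are then the quotient of those on $T[1]F$ by the ideal generated by $x_1$ and $\theta^{x_1}$. This ideal is $D_F$-stable, since $D_Fx_1=\theta^{x_1}$ and $D_F\theta^{x_1}=0$, which gives the restriction.
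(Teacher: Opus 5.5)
Your proposal is correct and follows the paper's proof: you expand $[\Q_F,\Q_F]$ into the source, target and mixed commutators, kill them with $D_F^2=0$, $\Q_{\Y}^2=0$ and \eqref{eq:commuting-lifts}, and obtain projectability summand by summand from the compatibility of restriction with the source de Rham differential and with pointwise application of $\Q_{\Y}$. Your coordinate check that the ideal generated by $x_1$ and $\theta^{x_1}$ is $D_F$-stable is a useful explicit justification of the step the paper states in one line.
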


\begin{proof}
Write $\Q_F=\widehat D_F+\check\Q_{\Y}$.  The source differential satisfies $D_F^2=0$, hence $[\widehat D_F,\widehat D_F]=0$.  The target vector field is cohomological, so $[\check\Q_{\Y},\check\Q_{\Y}]=0$.  The mixed commutator vanishes by \eqref{eq:commuting-lifts}; therefore $[\Q_F,\Q_F]=0$.

Restriction commutes with the de Rham differential on the source and with pointwise application of $\Q_{\Y}$.  Thus each summand of $\Q_F$ is $\rho_{FG}$-related to the corresponding summand of $\Q_G$.
\end{proof}

\section{The facewise AKSZ descent theorem}

\subsection{Modified Hamiltonian identity}

The following theorem is the facewise version of the classical AKSZ--BV--BFV identity.
Its local analytic content is standard: for a source with a single boundary component the
identity is the defining axiom of a $k$-extended BV theory,
$\iota_{Q_{N_{i-1}}}\omega_{N_{i-1}}=(-1)^{n-i+1}\delta S_{N_{i-1}}+\pi_i^*\alpha_{N_i}$,
in \cite[Definition~3.26]{CMRClassical}, and it is proved for AKSZ theories in
\cite[Proposition~6.3]{CMRClassical}.  The added point is the incidence organization over
all faces: the single boundary pullback is replaced by the signed sum
$\sum_{G\prec F}[F:G]\rho_{FG}^*$, which is what makes the codimension-two statements of
\cref{cor:cornersquare,thm:ordered-transgression} available.

\begin{theorem}[Facewise AKSZ descent]\label{thm:facewise}
Let $M$ be a compact oriented $n$-manifold with faces, and let $(\Y,\omega_{\Y}=\dd_{\Y}\alpha_{\Y},\Q_{\Y},\Theta)$ be an exact Hamiltonian dg symplectic target of AKSZ degree $n-1$ as in \cref{def:target}.  For every oriented face $F$ of dimension $m$, the data \eqref{eq:omegaF}--\eqref{eq:SF} satisfy
\begin{equation}\label{eq:main-descent}
 \iota_{\Q_F}\omega_F
 =(-1)^m\delta S_F
 +\sum_{G\prec F}[F:G]\,\rho_{FG}^*\alpha_G.
\end{equation}
Moreover, the restrictions are cohomological as in \eqref{eq:Qproject}.
\end{theorem}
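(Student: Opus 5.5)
The plan is to compute $\iota_{\Q_F}\omega_F$ directly, splitting $\Q_F=\widehat D_F+\check\Q_{\Y}$ and treating the two summands separately with the calculus of \cref{lem:transgression-stokes,lem:target-lift}. The projectability claim is already \cref{prop:Q-square}, so only \eqref{eq:main-descent} needs proof.

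\emph{Source part.} Write $\omega_F=\mathsf T_F^0(\dd_{\Y}\alpha_{\Y})$. By \eqref{eq:transgression-stokes} with $k=1$ and $\chi=\alpha_{\Y}$,
\[
 \delta\mathsf T_F^1\alpha_{\Y}=(-1)^m\Bigl(\mathsf T_F^1\omega_{\Y}-\sum_{G\prec F}[F:G]\rho_{FG}^*\mathsf T_G^0\alpha_{\Y}\Bigr),
\]
and $\mathsf T_F^1\omega_{\Y}=\iota_{\widehat D_F}\omega_F$ by \eqref{eq:insertD}. Rearranging gives
\[
 \iota_{\widehat D_F}\omega_F=(-1)^m\delta\mathsf T_F^1\alpha_{\Y}+\sum_{G\prec F}[F:G]\rho_{FG}^*\alpha_G,
\]
so the boundary term of the theorem is produced entirely by the source part, with exactly the incidence signs of \cref{def:facechain}.

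\emph{Target part.} By \eqref{eq:target-contraction} with $|\Q_{\Y}|=1$, the sign is $(-1)^{2m}=1$, hence $\iota_{\check\Q_{\Y}}\omega_F=\mathsf T_F^0(\iota_{\Q_{\Y}}\omega_{\Y})=\mathsf T_F^0(\dd_{\Y}\Theta)$. By \eqref{eq:transgression-stokes} with $k=0$ this equals $(-1)^m\delta\mathsf T_F^0\Theta$, with no boundary term. Adding the two parts and using \eqref{eq:SF} gives \eqref{eq:main-descent}.

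\emph{Main obstacle.} The only delicate point is sign consistency. The contraction sign in \eqref{eq:target-contraction} must be trivial for a degree-one field. More importantly, the factor $(-1)^m$ in \eqref{eq:transgression-stokes} must multiply both the interior term and the boundary term, so that after rearranging the boundary sum appears with coefficient $+1$ and no residual $(-1)^m$. I would check this against the conventions of \cref{app:signs} and \cref{app:transgression}. As a sanity check I would verify the case $F=M$ closed, where the identity must reduce to the standard AKSZ relation $\iota_{\Q}\omega=(-1)^n\delta S$.
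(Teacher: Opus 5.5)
Your proposal is correct and matches the paper's proof essentially step for step. The paper also splits $\Q_F=\widehat D_F+\check\Q_{\Y}$ and obtains the boundary sum from \eqref{eq:transgression-stokes} with $k=1$ applied to $\alpha_{\Y}$. It handles the target part via \eqref{eq:target-contraction}, whose sign is trivial for $|\Q_{\Y}|=1$, together with \eqref{eq:transgression-stokes} at $k=0$, and cites \cref{prop:Q-square} for projectability.

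The sign concern you flag is already settled by the form of \eqref{eq:transgression-stokes}: the factor $(-1)^m$ multiplies both the interior and the boundary term, so the boundary sum enters with coefficient $+1$.
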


\begin{proof}
We compute the source and target parts separately.

For the source part, use \eqref{eq:insertD}, exactness of the target symplectic form, and \eqref{eq:transgression-stokes} with $k=1$:
\begin{align*}
 \iota_{\widehat D_F}\omega_F
 &=\mathsf T_F^1(\omega_{\Y})
 =\mathsf T_F^1(\dd_{\Y}\alpha_{\Y})\\
 &=(-1)^m\delta\mathsf T_F^1(\alpha_{\Y})
 +\sum_{G\prec F}[F:G]\rho_{FG}^*\mathsf T_G^0(\alpha_{\Y})\\
 &=(-1)^m\delta S_F^{\mathrm{kin}}
 +\sum_{G\prec F}[F:G]\rho_{FG}^*\alpha_G.
\end{align*}
Here $S_F^{\mathrm{kin}}=\mathsf T_F^1\alpha_{\Y}$.

For the target part, $|\Q_{\Y}|=1$, so the sign in \eqref{eq:target-contraction} is trivial.  The target Hamiltonian identity and \eqref{eq:transgression-stokes} with $k=0$ give
\begin{align*}
 \iota_{\check\Q_{\Y}}\omega_F
 &=\mathsf T_F^0(\iota_{\Q_{\Y}}\omega_{\Y})
 =\mathsf T_F^0(\dd_{\Y}\Theta)\\
 &=(-1)^m\delta\mathsf T_F^0(\Theta)
 =(-1)^m\delta S_F^{\mathrm{int}}.
\end{align*}
Adding the two equalities proves \eqref{eq:main-descent}.  Cohomologicality and compatibility with restrictions were proved in \cref{prop:Q-square}.
\end{proof}

\begin{remark}[The facewise data as relaxed \texorpdfstring{$\bfV{r}$}{BFrV} manifolds]\label{rem:relaxed}
On a face $F$ with nonempty boundary, $\omega_F$ need not be nondegenerate and, by
\eqref{eq:main-descent}, $\Q_F$ is not Hamiltonian for $S_F$.  The quadruple
$(\F_F,\Q_F,\omega_F,S_F)$ is therefore not a $\bfV{r}$ manifold but a \emph{relaxed} one, in
the sense of \cite[Definition~2.18]{CFT2026}, and $(\dd_{\mathrm{face}}\alpha)_F$ is the
corresponding check one-form $\check\alpha$ of \cite[Remark~2.19]{CFT2026}.  The content of
\cref{thm:facewise} is that this defect is not merely some one-form on $\F_F$: it is the signed
sum of the transgressed primitives of the codimension-one faces of $F$, with the incidence
numbers of \cref{def:facechain}.
\end{remark}

\begin{remark}[Sign conventions]
Different BV--BFV references move the factor $(-1)^m$ between $\Q_F$, $S_F$, and $\alpha_F$.  The invariant content of \eqref{eq:main-descent} is that the failure of $\Q_F$ to be Hamiltonian is exactly the oriented sum of primitives on the adjacent faces.  All subsequent cancellation statements are independent of this global convention.
\end{remark}

Write $\dd_{\mathrm{face}}$ for the alternating pullback along codimension-one face inclusions: for a family $\eta=\{\eta_F\}_{F\in\Faces(M)}$ of field-space forms,
\begin{equation}\label{eq:dface}
 (\dd_{\mathrm{face}}\eta)_F:=\sum_{G\prec F}[F:G]\,\rho_{FG}^*\eta_G .
\end{equation}
This is the operator dual to the face boundary $\partial_{\mathrm{face}}$ of \cref{def:facechain}, and $\dd_{\mathrm{face}}^2=0$ follows from functoriality of restriction together with \cref{lem:incidence}.  With this notation \eqref{eq:main-descent} reads $\iota_{\Q_F}\omega_F-(-1)^m\delta S_F=(\dd_{\mathrm{face}}\alpha)_F$.

\begin{corollary}[Coherence data]\label{cor:coherence-data}
The family in \cref{thm:facewise} is coherent in the following explicit sense:
\begin{align*}
\rho_{FG}\circ\rho_{EF}&=\rho_{EG} &&(G\prec F\prec E),\\
T\rho_{FG}(\Q_F)&=\Q_G,\\
\delta\alpha_F&=(-1)^{\dim F}\omega_F,\\
\iota_{\Q_F}\omega_F-(-1)^{\dim F}\delta S_F
&=(\dd_{\mathrm{face}}\alpha)_F,\\
(\dd_{\mathrm{face}}^2\alpha)_H&=0.
\end{align*}
Thus the field restrictions, cohomological vector fields, primitives, symplectic forms, and actions are not independent collections: they form one descent object over the signed face category.
\end{corollary}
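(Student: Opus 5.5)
The five identities in \cref{cor:coherence-data} are assembled from results already proved, so the plan is to verify them one by one.

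\emph{Functoriality of restriction.} The identity $\rho_{FG}\circ\rho_{EF}=\rho_{EG}$ is immediate. Restriction of maps $T[1]E\to\Y$ is precomposition with $T[1]$ of the inclusion. The inclusions satisfy $G\hookrightarrow F\hookrightarrow E$, equal to $G\hookrightarrow E$, and $T[1]$ is functorial, so precomposition composes in the right order.

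\emph{Projectability and primitives.} The relation $T\rho_{FG}(\Q_F)=\Q_G$ is \eqref{eq:Qproject} of \cref{prop:Q-square}. The relation $\delta\alpha_F=(-1)^{\dim F}\omega_F$ is \eqref{eq:exactF}, since $(-1)^m$ is its own inverse. The modified Hamiltonian identity is \eqref{eq:main-descent} of \cref{thm:facewise}, rewritten through the definition \eqref{eq:dface} of $\dd_{\mathrm{face}}$.

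\emph{Vanishing of $\dd_{\mathrm{face}}^2\alpha$.} This is the only step with content. For a face $H$ of codimension two below some face, apply \eqref{eq:dface} twice:
\[
 (\dd_{\mathrm{face}}^2\alpha)_F=\sum_{G\prec F}\sum_{H\prec G}[F:G][G:H]\,\rho_{FG}^*\rho_{GH}^*\alpha_H .
\]
By the functoriality identity above, $\rho_{FG}^*\rho_{GH}^*=(\rho_{GH}\circ\rho_{FG})^*=\rho_{FH}^*$, which is independent of the intermediate face $G$. Regrouping by $H$ gives
\[
 \sum_{H}\Bigl(\sum_{G:\,H\prec G\prec F}[F:G][G:H]\Bigr)\rho_{FH}^*\alpha_H .
\]
Each inner coefficient vanishes by \cref{lem:incidence}. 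The lemma applies inside $F$ because, by \cref{rem:depth}, every codimension-two face $H$ of $F$ lies in exactly two codimension-one faces of $F$.

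\emph{Expected obstacle.} The only delicate point is bookkeeping: every codimension-two face of $F$ must arise as a face of some $G\prec F$, and no extra sign may enter the pullbacks. The pullbacks are even operations in the internal degree (restriction involves no fiber integration), so they contribute no Koszul sign and the regrouping is legitimate.
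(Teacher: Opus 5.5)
Your proposal is correct and follows the paper's proof. You cite functoriality of restriction, \cref{prop:Q-square}, \eqref{eq:exactF} and \cref{thm:facewise} for the first four identities, and obtain the vanishing of $\dd_{\mathrm{face}}^2\alpha$ from functoriality plus \cref{lem:incidence}; your explicit regrouping of the double sum is the same computation the paper carries out for the horizontal part of $\mathbf D^2=0$ in \cref{app:totalcomplex}.
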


\begin{proof}
The first identity is functoriality of restriction; the second is \cref{prop:Q-square}; the third is \eqref{eq:exactF}; the fourth is \cref{thm:facewise}; and the fifth follows from \cref{lem:incidence}.  These identities are items (i)--(iii) of \cref{thm:intro-main}; items (iv)--(vi) are proved in
\cref{thm:ordered-transgression,thm:closed-face}.
\end{proof}

\subsection{Propagation of the symplectic defect}

Taking $\delta$ of \eqref{eq:main-descent} yields the next stage of descent.

\begin{corollary}\label{cor:sympdefect}
For every face $F$ of dimension $m$,
\begin{equation}\label{eq:sympdefect}
 \cL_{\Q_F}\omega_F
 =(-1)^{m}
 \sum_{G\prec F}[F:G]\rho_{FG}^*\omega_G.
\end{equation}
In particular, $\omega_F$ is $\Q_F$-invariant when $F$ is closed.
\end{corollary}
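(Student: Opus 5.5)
The plan is to apply the field-space de Rham differential $\delta$ to the descent identity \eqref{eq:main-descent} of \cref{thm:facewise} and to convert the left-hand side into a Lie derivative using the Cartan convention fixed in the proof of \cref{lem:transgression-stokes}. For the degree-one vector field $\Q_F$ that convention reads $\cL_{\Q_F}=\iota_{\Q_F}\delta-\delta\iota_{\Q_F}$. By \eqref{eq:exactF}, $\omega_F=(-1)^m\delta\alpha_F$ is $\delta$-closed. Hence
\[
 \cL_{\Q_F}\omega_F=-\delta\bigl(\iota_{\Q_F}\omega_F\bigr).
\]

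Next I apply $\delta$ to the right-hand side of \eqref{eq:main-descent}. The term $(-1)^m\delta S_F$ is killed by $\delta^2=0$. In the boundary sum I use that $\delta$ commutes with the pullbacks $\rho_{FG}^*$, since these are pullbacks along maps of graded manifolds. Each $G\prec F$ has dimension $m-1$, so \eqref{eq:exactF} applied on $G$ gives $\delta\alpha_G=(-1)^{m-1}\omega_G$. This yields
\[
 \delta\bigl(\iota_{\Q_F}\omega_F\bigr)=(-1)^{m-1}\sum_{G\prec F}[F:G]\,\rho_{FG}^*\omega_G.
\]
Combining this with the first display gives \eqref{eq:sympdefect}. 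If $F$ is closed, there is no $G\prec F$, so the sum is empty and $\cL_{\Q_F}\omega_F=0$.

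The only delicate point is sign bookkeeping. Three signs must be pinned down: the sign in the Cartan formula for an odd vector field, which I take from the convention stated in the proof of \cref{lem:transgression-stokes}; the dimension shift $m\mapsto m-1$ when \eqref{eq:exactF} is applied on $G$ rather than $F$; and the fact that $\delta$ commutes with $\rho_{FG}^*$ without a sign. Once these are fixed, the two minus signs combine to give exactly $(-1)^m$, and nothing else is needed beyond \cref{thm:facewise} and \eqref{eq:exactF}.
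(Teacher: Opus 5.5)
Your proposal is correct and follows essentially the same route as the paper's proof. The paper also applies $\delta$ to \eqref{eq:main-descent}, uses the Cartan convention $\cL_{\Q_F}=\iota_{\Q_F}\delta-\delta\iota_{\Q_F}$ together with $\delta\omega_F=0$, and then applies \eqref{eq:exactF} on each $G\prec F$ with the dimension shift to $m-1$ to obtain the sign $(-1)^m$.
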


\begin{proof}
Because $|\Q_F|=1$, our Cartan convention is
\[
 \cL_{\Q_F}=\iota_{\Q_F}\delta-\delta\iota_{\Q_F}.
\]
Since $\delta\omega_F=0$ by \eqref{eq:exactF}, applying $\delta$ to \eqref{eq:main-descent} yields
\[
 -\cL_{\Q_F}\omega_F
 =\sum_{G\prec F}[F:G]\rho_{FG}^*(\delta\alpha_G).
\]
Now $\dim G=m-1$, and \eqref{eq:exactF} on $G$ gives
\[
 \delta\alpha_G=(-1)^{m-1}\omega_G.
\]
Consequently
\[
 \cL_{\Q_F}\omega_F
 =(-1)^m\sum_{G\prec F}[F:G]\rho_{FG}^*\omega_G,
\]
as claimed.
\end{proof}

Equation \eqref{eq:sympdefect} is the facewise form of the identity
$\check\omega=\delta\check\alpha=-\cL_{\Q}\omega$ for relaxed $\bfV{k}$ data, in the conventions of
\cite[Remark~2.19]{CFT2026}; its single-boundary form
$\cL_{Q_N}\omega_N=(-1)^{\dim N}\pi^*\omega_{\partial N}$ is
\cite[Proposition~D.2]{CMRClassical}, and the higher-codimension setting is
\cite[Section~3.6]{CMRClassical}.  It says that the non-invariance
of the degree-$(r-1)$ symplectic form on a codimension-$r$ face is measured by the degree-$r$
symplectic forms one step deeper.  This is the differential-geometric origin of the $\bfV{r}$ tower.

\subsection{The corner-square identity}

Let $H$ be a codimension-two face of $F$.  By \cref{rem:depth} there are exactly two intermediate faces $G_1,G_2$.

\begin{corollary}[Corner-square compatibility]\label{cor:cornersquare}
The total codimension-two contribution obtained by applying the AKSZ defect twice vanishes.  The
statement is the transgressed form of $\partial_{\mathrm{face}}^2=0$: its content is that the field
restrictions along the two orders agree, after which the incidence coefficients cancel by
\cref{lem:incidence}.  Explicitly,
\begin{equation}\label{eq:cornersquare}
 \sum_{G:\,H\prec G\prec F}
 [F:G][G:H]\,
 (\rho_{GH}\circ\rho_{FG})^*\alpha_H=0.
\end{equation}
Equivalently, the two ordered descents from $F$ to $H$ agree after correcting by the orientation sign induced by transposing the two normal directions.
\end{corollary}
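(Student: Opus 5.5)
The plan is to reduce \eqref{eq:cornersquare} to \cref{lem:incidence} in two steps. First I show that both field restrictions from $F$ to $H$ coincide. Then the common pullback of $\alpha_H$ can be factored out of the sum, leaving only the incidence coefficients.

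\emph{Step 1: the restrictions agree.} By \cref{rem:depth} there are exactly two intermediate faces $G_1,G_2$ with $H\prec G_i\prec F$. Each composite $\rho_{G_iH}\circ\rho_{FG_i}\colon\F_F\to\F_H$ is precomposition with the composite inclusion $T[1]H\hookrightarrow T[1]G_i\hookrightarrow T[1]F$. Both composites equal the single inclusion $T[1]H\hookrightarrow T[1]F$ induced by $H\subseteq F$, which does not depend on $G_i$. Hence
\[
 \rho_{G_1H}\circ\rho_{FG_1}=\rho_{G_2H}\circ\rho_{FG_2}=:\rho_{FH},
\]
which is the first identity of \cref{cor:coherence-data} applied to the face $F$. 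Under \cref{ass:formal} the pullback of local forms is functorial, so $(\rho_{GH}\circ\rho_{FG})^*\alpha_H=\rho_{FH}^*\alpha_H$ for both $G$.

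\emph{Step 2: factor out and cancel.} The left side of \eqref{eq:cornersquare} becomes
\[
 \Bigl(\sum_{G:\,H\prec G\prec F}[F:G][G:H]\Bigr)\rho_{FH}^*\alpha_H .
\]
The bracket vanishes by \cref{lem:incidence}. This gives \eqref{eq:cornersquare}, and it is exactly the $H$-component of $(\dd_{\mathrm{face}}^2\alpha)_F=0$.

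\emph{Step 3: interpretation.} To justify the phrase ``applying the AKSZ defect twice'', I note that the defect on $F$ is $(\dd_{\mathrm{face}}\alpha)_F$ by \cref{thm:facewise}. The $\alpha_G$ appearing in it are themselves primitives whose own defects, through \eqref{eq:main-descent} on $G$, involve $\rho_{GH}^*\alpha_H$. Collecting all codimension-two terms therefore produces precisely the sum in \eqref{eq:cornersquare}. The ``equivalently'' clause then follows from the relation $[F:G_1][G_1:H]=-[F:G_2][G_2:H]$ established in the proof of \cref{lem:incidence}: the two ordered descents differ exactly by the transposition sign.

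\emph{Main obstacle.} The only delicate point is Step 1, at the level of graded mapping spaces rather than underlying maps. One must make sure that $\rho$ commutes with the superfield structure, so that pullback of the \emph{local} forms $\alpha_H$ is unambiguous. It must also carry no hidden orientation sign: all orientation dependence sits in the incidence numbers, not in the restriction maps, because $\alpha_H=\mathsf T_H^0\alpha_{\Y}$ uses the fixed orientation of $H$. I would make this explicit by observing that $\rho_{FG}$ is defined purely by precomposition, independently of the orientations.
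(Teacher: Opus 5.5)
Your proposal is correct and is essentially the paper's own argument: functoriality of restriction makes both composites equal to $\rho_{FH}$, so $\rho_{FH}^*\alpha_H$ factors out of the sum and its coefficient $\sum_{G:\,H\prec G\prec F}[F:G][G:H]$ vanishes by \cref{lem:incidence}. Your two additional observations are consistent with the paper's framing: that the sum is the $H$-component of $(\dd_{\mathrm{face}}^2\alpha)_F$, and that all orientation dependence sits in the incidence numbers rather than in the restriction maps.
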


\begin{proof}
Restriction is functorial, so both composites equal $\rho_{FH}$ as maps of fields.  The coefficient of $\rho_{FH}^*\alpha_H$ is
\[
 \sum_{G:\,H\prec G\prec F}[F:G][G:H],
\]
which vanishes by \cref{lem:incidence}.  The substance of the statement is thus the functoriality of restriction on fields, which makes the pullback independent of the intermediate face; the cancellation of the coefficients is then \cref{lem:incidence}.  In local coordinates, the two terms correspond to the ordered normals $(\partial_{x_1},\partial_{x_2})$ and $(\partial_{x_2},\partial_{x_1})$; transposition reverses the induced orientation.
\end{proof}

\begin{figure}[ht]
\centering
\begin{tikzpicture}[>=Latex]
\node (F)  at (0,0)     {$\F_F$};
\node (G1) at (-2.4,-2) {$\F_{G_1}$};
\node (G2) at (2.4,-2)  {$\F_{G_2}$};
\node (H)  at (0,-4)    {$\F_H$};
\draw[->] (F) -- node[left] {$\rho_{FG_1}$} (G1);
\draw[->] (F) -- node[right] {$\rho_{FG_2}$} (G2);
\draw[->] (G1) -- node[left] {$\rho_{G_1H}$} (H);
\draw[->] (G2) -- node[right] {$\rho_{G_2H}$} (H);
\node[below=8mm of H,align=center] {the square commutes on fields;\\the two incidence signs are opposite};
\end{tikzpicture}
\caption{The codimension-two restriction square.}
\label{fig:square}
\end{figure}

\begin{theorem}[Face transgression complex]\label{thm:ordered-transgression}
For $p\in\mathbb Z$, let $\mathcal C^p(M,\Y)$ consist of families
\[
 \eta=\{\eta_F^k\}_{F\in\Faces(M),\,k\geq0},
 \qquad
 \eta_F^k\in\Omega^{p-k}(\F_F),
\]
with the convention $\Omega^q(\F_F)=0$ for $q<0$. Put $\eta_F^{-1}=0$ and define
\begin{equation}\label{eq:ordered-total-differential}
 (\mathbf D\eta)_F^k
 :=(-1)^{\dim F}\delta\eta_F^k
 +\sum_{G\prec F}[F:G] \, \rho_{FG}^*\eta_G^{k-1}.
\end{equation}
Then $\mathbf D^2=0$.  Moreover, the factorially normalized facewise transgression
\begin{equation}\label{eq:normalized-transgression}
 \mathbf T(\chi)_F^k:=\frac1{k!}\mathsf T_F^k\chi,
 \qquad \chi\in\Omega^p(\Y),
\end{equation}
defines a cochain map
\[
 \mathbf T:(\Omega^\bullet(\Y),\dd_{\Y})
 \longrightarrow (\mathcal C^\bullet(M,\Y),\mathbf D).
\]
In particular, a closed target form transgresses to a $\mathbf D$-cocycle, while exactness of the target symplectic form gives
\[
 \mathbf D\mathbf T(\alpha_{\Y})=\mathbf T(\omega_{\Y}),
 \qquad
 \mathbf D\mathbf T(\omega_{\Y})=0.
\]
\end{theorem}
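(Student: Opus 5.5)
The plan is to verify the two assertions separately by direct expansion of \eqref{eq:ordered-total-differential}. The only inputs are the transgression--Stokes identity \eqref{eq:transgression-stokes}, naturality of $\delta$ under pullback, functoriality of restriction, and \cref{lem:incidence}.

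First, a grading check. Take $\eta\in\mathcal C^p(M,\Y)$, so that $\eta_F^k\in\Omega^{p-k}(\F_F)$. Then $\delta\eta_F^k$ lies in $\Omega^{p+1-k}(\F_F)$. Also $\eta_G^{k-1}\in\Omega^{p+1-k}(\F_G)$, so $\rho_{FG}^*\eta_G^{k-1}$ lies in $\Omega^{p+1-k}(\F_F)$ as well. Hence $\mathbf D$ maps $\mathcal C^p$ to $\mathcal C^{p+1}$. On the transgression side, $\mathsf T_F^0$ preserves field-space form degree, and each $\iota_{\widehat D_F}$ lowers it by one. So $\mathsf T_F^k\chi\in\Omega^{p-k}(\F_F)$ for $\chi\in\Omega^p(\Y)$, and it vanishes for $k>p$; thus $\mathbf T(\chi)\in\mathcal C^p$.

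For $\mathbf D^2=0$, fix $F$ with $\dim F=m$ and expand $(\mathbf D\mathbf D\eta)_F^k$ into four kinds of terms.
\begin{enumerate}[label=(\alph*)]
\item The term $(-1)^m(-1)^m\delta\delta\eta_F^k$ vanishes because $\delta^2=0$.
\item The first cross term is $(-1)^m\sum_{G\prec F}[F:G]\,\delta\rho_{FG}^*\eta_G^{k-1}$.
\item The second cross term is $\sum_{G\prec F}[F:G]\rho_{FG}^*\bigl((-1)^{m-1}\delta\eta_G^{k-1}\bigr)$, using $\dim G=m-1$.
\item The double term is $\sum_{G\prec F}\sum_{H\prec G}[F:G][G:H]\,\rho_{FG}^*\rho_{GH}^*\eta_H^{k-2}$.
\end{enumerate}
Since $\delta$ commutes with $\rho_{FG}^*$, the two cross terms carry opposite signs $(-1)^m$ and $(-1)^{m-1}$ and cancel. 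This is precisely why the factor $(-1)^{\dim F}$ is built into $\mathbf D$. For the double term, functoriality gives $\rho_{FG}^*\rho_{GH}^*=\rho_{FH}^*$. Grouping by $H$, the coefficient of $\rho_{FH}^*\eta_H^{k-2}$ is $\sum_{H\prec G\prec F}[F:G][G:H]$, which is zero by \cref{lem:incidence}. This is the same argument as in \cref{cor:cornersquare}, i.e.\ $\dd_{\mathrm{face}}^2=0$.

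For the cochain-map property, substitute \eqref{eq:normalized-transgression} into \eqref{eq:ordered-total-differential}:
\[
(\mathbf D\mathbf T\chi)_F^k=\tfrac{1}{k!}(-1)^m\delta\mathsf T_F^k\chi+\tfrac{1}{(k-1)!}\sum_{G\prec F}[F:G]\rho_{FG}^*\mathsf T_G^{k-1}\chi .
\]
Multiplying \eqref{eq:transgression-stokes} by $(-1)^m$ gives
\[
(-1)^m\delta\mathsf T_F^k\chi=\mathsf T_F^k(\dd_{\Y}\chi)-k\sum_{G\prec F}[F:G]\rho_{FG}^*\mathsf T_G^{k-1}\chi .
\]
After substitution, the boundary sums appear with coefficients $-k/k!$ and $1/(k-1)!$, which cancel. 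What remains is $\tfrac1{k!}\mathsf T_F^k(\dd_{\Y}\chi)=\mathbf T(\dd_{\Y}\chi)_F^k$. For $k=0$ there is no boundary sum, and the convention $\eta^{-1}=0$ matches this.

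The displayed consequences then follow immediately. Taking $\chi$ closed gives a $\mathbf D$-cocycle. Taking $\chi=\alpha_{\Y}$ and then $\chi=\omega_{\Y}$, and using $\dd_{\Y}\omega_{\Y}=0$, gives the two stated identities.

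The main obstacle is not in this argument but in its input. The exact form of \eqref{eq:transgression-stokes} is needed: the overall sign $(-1)^m$, the relative sign between the bulk and boundary terms, and above all the coefficient $k$. These are exactly what force the $1/k!$ normalization, and they are supplied by \cref{app:transgression}. Within the proof itself, the only delicate point is tracking the sign $(-1)^{\dim G}=-(-1)^{\dim F}$ in the cross terms.
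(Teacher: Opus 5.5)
Your proposal is correct and follows the paper's proof essentially step for step. The $\mathbf D^2=0$ argument is the one given in the proof and in \cref{app:totalcomplex}: the cross terms cancel via $(-1)^m$ versus $(-1)^{m-1}$, and the double term vanishes by functoriality of restriction and \cref{lem:incidence}. The cochain-map property is obtained exactly as in the paper, by multiplying \eqref{eq:transgression-stokes} by $(-1)^m/k!$ so that the boundary coefficient $k/k!=1/(k-1)!$ matches the horizontal part of $\mathbf D$.
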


\begin{proof}
For a face $F$ of dimension $m$, the two mixed terms in $\mathbf D^2$ have coefficients $(-1)^m$ and $(-1)^{m-1}$, because every $G\prec F$ has dimension $m-1$; they cancel since pullback commutes with $\delta$.  The remaining horizontal term vanishes by functoriality of restriction and \cref{lem:incidence}.  Thus $\mathbf D^2=0$.

Multiplying \eqref{eq:transgression-stokes} by $(-1)^m/k!$ gives
\[
 (-1)^m\delta\!\left(\frac1{k!}\mathsf T_F^k\chi\right)
 =\frac1{k!}\mathsf T_F^k(\dd_{\Y}\chi)
 -\sum_{G\prec F}[F:G] \, \rho_{FG}^*
   \left(\frac1{(k-1)!}\mathsf T_G^{k-1}\chi\right),
\]
with the last term absent for $k=0$.  This is exactly
$\mathbf D\mathbf T(\chi)=\mathbf T(\dd_{\Y}\chi)$.  The final identities follow from
$\dd_{\Y}\alpha_{\Y}=\omega_{\Y}$ and $\dd_{\Y}\omega_{\Y}=0$.
\end{proof}

The theorem packages the complete hierarchy of facewise Stokes identities into a single cochain map.  Its horizontal square-zero relation is precisely the incidence mechanism behind \cref{cor:cornersquare}.  For generalized corners, it identifies the Boolean face complex of $\mathbb N^r$ as the combinatorial object that must eventually be replaced by the face complex of a weakly toric monoid.

\begin{theorem}[Closed-face master equation]\label{thm:closed-face}
Let $F$ be a closed codimension-$r$ face, $m=\dim F=n-r$.  Assume that $\omega_F$ is nondegenerate.  Then
\[
 \iota_{\Q_F}\omega_F=(-1)^m\delta S_F,
 \qquad \Q_F^2=0,
\]
and the Hamiltonian $(-1)^mS_F$ satisfies the classical master equation
\[
 \{S_F,S_F\}_F=0.
\]
Thus $(\F_F,\omega_F,\Q_F,S_F)$ is a strict $\bfV{r}$ theory, up to the harmless global sign convention for the Hamiltonian.
\end{theorem}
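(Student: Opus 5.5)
The plan is to specialise \cref{thm:facewise} to a face with no codimension-one faces, and then obtain the master equation by the same nondegeneracy argument used after \cref{def:target} for the target.

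\emph{Step 1: Hamiltonian identity and $\Q_F^2=0$.} Since $F$ is closed, the set $\{G\prec F\}$ is empty. Hence the defect $(\dd_{\mathrm{face}}\alpha)_F$ in \eqref{eq:main-descent} vanishes, and we get $\iota_{\Q_F}\omega_F=(-1)^m\delta S_F$. The identity $\Q_F^2=0$ is \cref{prop:Q-square}. By \cref{prop:degrees} the degrees are $|\omega_F|=r-1$, $|S_F|=r$ and $|\Q_F|=1$. So once nondegeneracy is imposed, all the degree requirements of a strict $\bfV{r}$ theory in \cite[Definition~3.26]{CMRClassical} are satisfied. Closedness of $\omega_F$ follows from \eqref{eq:exactF}, and \cref{cor:sympdefect} gives $\cL_{\Q_F}\omega_F=0$ as a consistency check.

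\emph{Step 2: master equation.} Because $\omega_F$ is nondegenerate, $\Q_F$ is the Hamiltonian vector field of $(-1)^mS_F$, so we may define the bracket $\{\cdot,\cdot\}_F$ on Hamiltonian local functionals. The aim is the standard identity
\[
\{S_F,S_F\}_F=\pm\,\iota_{\Q_F}\iota_{\Q_F}\omega_F=\pm\,\iota_{\Q_F}\delta S_F=\pm\,\Q_F(S_F).
\]
There are two routes to $\{S_F,S_F\}_F=0$. The first imitates the target argument. The commutator $\tfrac12[\Q_F,\Q_F]=0$ is Hamiltonian with Hamiltonian $\pm\tfrac12\{S_F,S_F\}_F$. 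Nondegeneracy then makes this a locally constant function of degree $2r+1-(r-1)\cdot 0$; more carefully, its degree is $2|S_F|-|\omega_F|+\dots$, which in the standard count is $r+1\neq0$. A constant of nonzero degree vanishes, so the bracket is zero. The second route computes directly
\[
\Q_F(S_F)=\iota_{\Q_F}\delta S_F=\pm\iota_{\Q_F}\iota_{\Q_F}\omega_F
\]
and shows that this vanishes by transgression. Using \cref{lem:target-lift} and \eqref{eq:insertD}, the expression $\iota_{\Q_F}\iota_{\Q_F}\omega_F$ expands into terms $\mathsf T_F^2\omega_{\Y}$, $\mathsf T_F^1(\iota_{\Q_{\Y}}\omega_{\Y})$ and $\mathsf T_F^0(\iota_{\Q_{\Y}}\iota_{\Q_{\Y}}\omega_{\Y})$. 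The last is $\mathsf T^0_F\{\Theta,\Theta\}_{\Y}=0$ by \cref{def:target}(e). The first two are integrals of $\dd_F$-exact densities over the closed face $F$: for instance $\mathsf T_F^1\dd_{\Y}\Theta$ is, by \eqref{eq:LieD} with no boundary faces, $\cL_{\widehat D_F}$ of something, hence zero after integration. I would present the first route as the proof and keep the second as a check.

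\emph{Main obstacle.} The delicate step is the degree count and the sign bookkeeping in route one. Specifically, I must verify that the Hamiltonian function of $[\Q_F,\Q_F]$ has nonzero degree $r+1$, where the bracket of degree-$r$ functions under an $(r-1)$-shifted form has degree $2r-(r-1)=r+1$. I must also make sure the global sign $(-1)^m$ only rescales the bracket. This is why the statement is phrased for the Hamiltonian $(-1)^mS_F$ and says ``up to the harmless sign''.
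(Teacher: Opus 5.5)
Your route one is the paper's proof. For closed $F$ the incidence sum in \eqref{eq:main-descent} is empty, and nondegeneracy makes $\Q_F$ the Hamiltonian vector field of $(-1)^mS_F$. Then $\tfrac12[\Q_F,\Q_F]=0$ forces $\{S_F,S_F\}_F$ to be a locally constant function of degree $2r-(r-1)=r+1\neq0$, hence zero, and the sign $(-1)^m$ cancels in the self-bracket.

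You should drop the garbled intermediate count ``$2r+1-(r-1)\cdot0$'' and keep the clean count from your last paragraph. The transgression check in route two is valid but not needed.
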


\begin{proof}
The boundary sum in \eqref{eq:main-descent} vanishes because $F$ is closed.  Nondegeneracy identifies $\Q_F$ with the Hamiltonian vector field of $(-1)^mS_F$.  For any symplectic form, the graded commutator of Hamiltonian vector fields is the Hamiltonian vector field of the Poisson bracket.  Hence
\[
 0=\tfrac12[\Q_F,\Q_F]
 =X_{\frac12\{(-1)^mS_F,(-1)^mS_F\}_F}.
\]
The bracket is therefore locally constant.  Its degree is $2r-(r-1)=r+1>0$, so no nonzero constant of that degree exists; it must vanish.  The overall sign cancels in the self-bracket.
\end{proof}

\begin{remark}[On the nondegeneracy hypothesis]\label{rem:nondeg}
Nondegeneracy of $\omega_F$ is not automatic and is not proved here in any
functional-analytic sense.  At the level of local field-space forms admitted by
\cref{ass:formal} it holds whenever $\omega_{\Y}$ is nondegenerate and $F$ is closed.
Indeed, $\mathsf T_F^0\omega_{\Y}=(p_F)_*\ev_F^*\omega_{\Y}$ is a Berezin fibre
integral over $T[1]F$, and Berezin integration retains only the top component in the
odd fibre coordinates.  It therefore pairs the part of a variation of source form
degree $p$ with the part of source form degree $m-p$, and the resulting pairing is
the composition of the pointwise nondegenerate pairing $\omega_{\Y}$ with the Poincar\'e
pairing $\Omega^p(F)\times\Omega^{m-p}(F)\to\mathbb R$, $(\eta,\eta')\mapsto\int_F\eta\wedge\eta'$.
The latter is nondegenerate on a closed oriented $F$, so $\omega_F$ is weakly
nondegenerate in the local-form sense; this is how it is used in \cref{prop:BF-cme}.
Weak nondegeneracy is all that the argument above requires, since it is what
identifies $\Q_F$ with a Hamiltonian vector field on local functionals.  Strong
nondegeneracy on a completed space of fields is a further analytic question, not
addressed here.
\end{remark}

\section{Codimension two: raw descent and strict \texorpdfstring{$\bfV{2}$}{BF2V} data}

\subsection{The regular AKSZ case}

Let $\Gamma$ be a codimension-two face of $M$.  Then
\[
 |\omega_\Gamma|=1,
 \qquad |S_\Gamma|=2.
\]
If $\Gamma$ is closed and $\omega_\Gamma$ is nondegenerate, \cref{thm:closed-face} with $r=2$ gives
\[
 \iota_{\Q_\Gamma}\omega_\Gamma
 =(-1)^{\dim\Gamma}\delta S_\Gamma,
 \qquad \{S_\Gamma,S_\Gamma\}_\Gamma=0,
\]
so that $(\F_\Gamma,\omega_\Gamma,\Q_\Gamma,S_\Gamma)$ is already a strict $\bfV{2}$ manifold.  Thus a regular AKSZ theory on an ordinary corner needs no additional construction at codimension two: the strict corner theory is already contained in transgression.

The difficulty in gravity is that the natural corner two-form can be degenerate and the relevant structure can be only pre-Dirac or homotopy Poisson.  One must then distinguish three operations:
\[
 \text{restriction},\qquad
 \text{BFV/AKSZ descent},\qquad
 \text{geometric reduction}.
\]
They need not commute without hypotheses.

\subsection{Dirac structures and Poisson graphs}

Let $P$ be a smooth (possibly infinite-dimensional) manifold.  The standard Courant bundle is
\[
 \mathbb T P=TP\oplus T^*P
\]
with its natural pairing and Dorfman bracket \cite{Courant}.  A Dirac structure $D\subset\mathbb TP$ is a maximally isotropic involutive subbundle.  If $\pi\in\Gamma(\wedge^2TP)$ is Poisson, then
\[
 \Graph(\pi)=\{\pi^\sharp(\eta)+\eta:\eta\in T^*P\}
\]
is a Dirac structure, and conversely a Dirac structure transverse to $TP$ is the graph of a
Poisson bivector; see \cite{Courant} and, in the conventions used here,
\cite[Example~2.29]{CFT2026}.

The graded encoding of a Poisson manifold is canonical \cite{Roytenberg}.  On $T^*[1]P$ let $\omega_{\mathrm{can}}$ be the degree-one symplectic form.  The bivector $\pi$ is a degree-two function
\[
 S_\pi=\frac12\pi^{ij}(x)p_ip_j,
\]
and
\[
 \{S_\pi,S_\pi\}_{\mathrm{can}}=0
 \quad\Longleftrightarrow\quad
 [\pi,\pi]_{\mathrm{Sch}}=0.
\]
Hence every Poisson manifold determines a strict $\bfV{2}$ manifold
\begin{equation}\label{eq:canonicalbf2v}
 \bigl(T^*[1]P,\omega_{\mathrm{can}},S_\pi,
 \Q_\pi=\{S_\pi,-\}\bigr).
\end{equation}
This is the observation recorded in \cite[Remark~3.14]{CFT2026}, and it is what
\cref{prop:strictify} below applies to a transgressed corner descendant.  In field-theoretic
examples, a pairing may identify the shifted cotangent presentation with a shifted tangent
presentation.

\begin{lemma}[Poisson graphs; {\cite{Courant}}, cf.\ {\cite[Example~2.29]{CFT2026}}]\label{lem:poisson-graph}
Let $P$ be a smooth manifold and $\pi\in\Gamma(\wedge^2TP)$.  The subbundle $\Graph(\pi)\subset TP\oplus T^*P$ is maximally isotropic.  It is involutive for the
Dorfman bracket if and only if $[\pi,\pi]_{\mathrm{Sch}}=0$.  The statement is classical; the proof
is included only because the maximality argument is the one point at which the infinite-dimensional
setting of \cref{ass:formal} requires comment.
\end{lemma}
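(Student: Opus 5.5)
The proof splits into isotropy, maximality, and involutivity, each done directly from the standard pairing
\[
 \langle X+\xi,Y+\eta\rangle=\tfrac12(\xi(Y)+\eta(X))
\]
and the Dorfman bracket
\[
 [X+\xi,Y+\eta]=[X,Y]+\cL_X\eta-\iota_Y\dd\xi .
\]

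\emph{Isotropy.} For $\xi,\eta\in T^*P$ we have
\[
 2\langle\pi^\sharp\xi+\xi,\pi^\sharp\eta+\eta\rangle=\xi(\pi^\sharp\eta)+\eta(\pi^\sharp\xi)=\pi(\eta,\xi)+\pi(\xi,\eta)=0
\]
by skew-symmetry, so $\Graph(\pi)$ is isotropic.

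\emph{Maximality.} I will avoid any dimension count, since that is the step that fails in the infinite-dimensional setting of \cref{ass:formal}. Suppose $Y+\eta$ is orthogonal to all of $\Graph(\pi)$. Then for every $\xi$,
\[
 0=\xi(Y)+\eta(\pi^\sharp\xi)=\xi(Y)-\xi(\pi^\sharp\eta)=\xi\bigl(Y-\pi^\sharp\eta\bigr).
\]
If covectors separate tangent vectors, this forces $Y=\pi^\sharp\eta$, so $Y+\eta\in\Graph(\pi)$. Thus $\Graph(\pi)^\perp=\Graph(\pi)$, which is the correct meaning of maximal isotropy in infinite dimensions. The separation hypothesis holds for local forms, or for a regular finite-dimensional approximation as in \cref{ass:formal}(b), and I will state it explicitly. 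This is the one point needing comment, and it is the step I expect to be the main obstacle: one must choose the cotangent space so that the dual pairing is weakly nondegenerate. For example, on Fréchet spaces one should take the topological dual together with the Hahn--Banach theorem.

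\emph{Involutivity.} By isotropy, the Courant tensor $T(e_1,e_2,e_3)=\langle[e_1,e_2],e_3\rangle$ is tensorial and skew on $\Graph(\pi)$. Involutivity is therefore equivalent, via the maximality just proved, to the vanishing of $T$ on graph sections. This may be checked on exact covectors $\dd f,\dd g,\dd h$ wherever these span (locally). For such covectors,
\[
 [\pi^\sharp\dd f+\dd f,\ \pi^\sharp\dd g+\dd g]=[\pi^\sharp\dd f,\pi^\sharp\dd g]+\dd\{f,g\},
\]
and pairing this with $\pi^\sharp\dd h+\dd h$ gives, up to a factor, the Jacobiator of $\{f,g\}=\pi(\dd f,\dd g)$. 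It is standard that this Jacobiator equals $\tfrac12[\pi,\pi]_{\mathrm{Sch}}(\dd f,\dd g,\dd h)$. Hence $T\equiv0$ if and only if $[\pi,\pi]_{\mathrm{Sch}}=0$.

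For the converse direction, $T$ is the $\pi$-contraction of $[\pi,\pi]_{\mathrm{Sch}}$, and it vanishes when evaluated on local exact covectors that separate. In the local-form or finite-approximation setting this gives $[\pi,\pi]_{\mathrm{Sch}}=0$.
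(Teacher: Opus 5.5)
Your isotropy and maximality steps are the paper's, including the explicit separation hypothesis in place of a rank count. The involutivity step takes a different route.

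The paper computes the Dorfman bracket of two arbitrary graph sections in closed form,
\[
 [\pi^\sharp\alpha+\alpha,\pi^\sharp\beta+\beta]_D=\pi^\sharp[\alpha,\beta]_\pi+[\alpha,\beta]_\pi+c\,\iota_{\alpha\wedge\beta}[\pi,\pi]_{\mathrm{Sch}}.
\]
Here the obstruction is a pure tangent vector, and such a vector lies in $\Graph(\pi)$ exactly when it vanishes. Maximality plays no role in this step, and nothing needs to be reduced to special sections.

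You instead use Courant's tensor $T_L(e_1,e_2,e_3)=\langle[e_1,e_2],e_3\rangle$, which is tensorial and skew on the isotropic $L=\Graph(\pi)$. You combine it with the criterion that $L$ is involutive iff $T_L=0$. The ``if'' half of that criterion needs $L^\perp=L$, so your order (maximality first) is essential. You then evaluate $T_L$ only on exact graph sections, where it reduces to a multiple of the Jacobiator of $\{f,g\}=\pi(\dd f,\dd g)$.

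What this buys you is a more elementary computation: the Jacobi identity for functions, rather than the bracket identity for arbitrary one-forms, which the paper quotes from the literature instead of deriving. The price is the passage from exact sections to all sections in the direction ``Poisson $\Rightarrow$ involutive''. That passage rests on $T_L$ being a pointwise tensor determined by a local frame $\{\dd x^i\}$. This holds in finite dimensions or on the regular approximation of \cref{ass:formal}(b). In the genuinely infinite-dimensional reading, however, $C^\infty$-linearity need not imply pointwise dependence. The paper's identity is stated for all $\alpha,\beta$ directly, so that extension step never arises.

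A minor point on wording in your last paragraph. On graph sections, $T_L$ is a constant multiple of $[\pi,\pi]_{\mathrm{Sch}}$ evaluated on the covector components, not a ``$\pi$-contraction'' of it. The direction ``involutive $\Rightarrow$ Poisson'' needs only that the differentials $\dd f$ separate pointwise, not tensoriality.
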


\begin{proof}
For $\alpha,\beta\in T^*P$ the Courant pairing of $\pi^\sharp\alpha+\alpha$ and $\pi^\sharp\beta+\beta$ is
\[
 \beta(\pi^\sharp\alpha)+\alpha(\pi^\sharp\beta)
 =\pi(\alpha,\beta)+\pi(\beta,\alpha)=0.
\]
For maximality, suppose $X+\xi$ is orthogonal to every element of $\Graph(\pi)$.  Then for all $\eta\in T^*P$
\[
 \eta(X)+\xi(\pi^\sharp\eta)
 =\eta(X)-\eta(\pi^\sharp\xi)=0 ,
\]
using antisymmetry of $\pi$, whence $X=\pi^\sharp\xi$ and $X+\xi\in\Graph(\pi)$.  Thus $\Graph(\pi)^\perp=\Graph(\pi)$.  This argument uses no dimension count and therefore applies verbatim in the infinite-dimensional
setting of \cref{ass:formal}, provided the chosen functional-analytic pairing between $TP$ and
$T^*P$ is separating; this is what replaces the finite-dimensional rank count.  A direct calculation with the Dorfman bracket \cite{Courant,Roytenberg} gives
\[
 [\pi^\sharp\alpha+\alpha,\pi^\sharp\beta+\beta]_D
 =\pi^\sharp[\alpha,\beta]_\pi+[\alpha,\beta]_\pi
 +c\,\iota_{\alpha\wedge\beta}[\pi,\pi]_{\mathrm{Sch}},
\]
for a nonzero constant $c$ depending only on the normalisation of the Schouten
bracket, where
\[
 [\alpha,\beta]_\pi
 =\cL_{\pi^\sharp\alpha}\beta
 -\iota_{\pi^\sharp\beta}\dd_P\alpha.
\]
The first two terms lie in the graph.  The last term is a vector field with no
cotangent component, so it lies in the graph for all $\alpha,\beta$ exactly when it
vanishes; the value of $c$ is immaterial for this conclusion.  Since
$[\pi,\pi]_{\mathrm{Sch}}$ is a trivector field, its contractions with all pairs of
one-forms determine it, so this is equivalent to $[\pi,\pi]_{\mathrm{Sch}}=0$.
\end{proof}

\begin{lemma}[Shifted cotangent encoding]\label{lem:shifted-cotangent}
We use the Schouten convention
\[
 [\pi,\pi]_{\mathrm{Sch}}^{ijk}
 =2\bigl(\pi^{\ell i}\partial_\ell\pi^{jk}
 +\pi^{\ell j}\partial_\ell\pi^{ki}
 +\pi^{\ell k}\partial_\ell\pi^{ij}\bigr).
\]
On $T^*[1]P$ let $(x^i,p_i)$ be local coordinates of degrees $0$ and $1$.  For
\[
 S_\pi=\frac12\pi^{ij}(x)p_ip_j
\]
one has
\[
 \{S_\pi,S_\pi\}_{\mathrm{can}}
 =\frac16[\pi,\pi]_{\mathrm{Sch}}^{ijk}p_ip_jp_k.
\]
Consequently $S_\pi$ satisfies the classical master equation if and only if $\pi$ is Poisson.
\end{lemma}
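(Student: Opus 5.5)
The plan is a direct coordinate computation in Darboux coordinates $(x^i,p_i)$ on $T^*[1]P$, where $\omega_{\mathrm{can}}=\delta p_i\,\delta x^i$ (or its negative, depending on the convention of \cref{app:signs}). The corresponding degree-$(-1)$ bracket is
\[
 \{f,g\}_{\mathrm{can}}=f\frac{\overleftarrow\partial}{\partial x^i}\frac{\overrightarrow\partial}{\partial p_i}g-f\frac{\overleftarrow\partial}{\partial p_i}\frac{\overrightarrow\partial}{\partial x^i}g .
\]
I would first record the two partial derivatives of $S_\pi=\tfrac12\pi^{ij}(x)p_ip_j$. Using antisymmetry of $\pi$ and anticommutativity of the odd $p_i$, one gets $\partial S_\pi/\partial p_i=\pi^{ij}p_j$ (up to a sign fixed by left versus right differentiation) and $\partial S_\pi/\partial x^\ell=\tfrac12\partial_\ell\pi^{jk}p_jp_k$.

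Since $S_\pi$ has even total degree, the two terms of $\{S_\pi,S_\pi\}$ contribute equally rather than cancelling. The self-bracket therefore becomes
\[
 \{S_\pi,S_\pi\}_{\mathrm{can}}=2\,(\pi^{\ell i}p_i)\cdot\tfrac12\,\partial_\ell\pi^{jk}p_jp_k=\pi^{\ell i}\partial_\ell\pi^{jk}\,p_ip_jp_k ,
\]
after moving the even coefficient functions through the $p$'s. I would verify the factor $2$ and the relative sign of the two terms carefully by checking on a single monomial.

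The next step is antisymmetrization. Because $p_ip_jp_k$ is totally antisymmetric, only the totally antisymmetric part of the coefficient $\pi^{\ell i}\partial_\ell\pi^{jk}$ survives. That coefficient is already antisymmetric in $j,k$, so contracting with $p_ip_jp_k$ gives the same result as contracting with one third of its cyclic sum over $(i,j,k)$. By the stated Schouten convention, this cyclic sum is exactly $\tfrac12[\pi,\pi]^{ijk}_{\mathrm{Sch}}$. Hence the bracket equals $\tfrac13\cdot\tfrac12\,[\pi,\pi]^{ijk}_{\mathrm{Sch}}p_ip_jp_k=\tfrac16[\pi,\pi]^{ijk}_{\mathrm{Sch}}p_ip_jp_k$.

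For the consequence, a cubic polynomial in the odd $p_i$ with totally antisymmetric coefficients vanishes if and only if its coefficients vanish. So $\{S_\pi,S_\pi\}_{\mathrm{can}}=0$ if and only if $[\pi,\pi]_{\mathrm{Sch}}=0$.

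The main obstacle is purely the sign bookkeeping: left versus right derivatives with respect to the odd $p_i$, and the overall sign of $\omega_{\mathrm{can}}$. These could a priori flip the overall sign or make the two terms cancel. I would fix this by testing on $P=\mathbb R^3$ with a simple nonconstant bivector such as $\pi=x^1\partial_2\wedge\partial_3$. If the overall sign came out opposite, the lemma's normalization would be matched by the choice of sign in $\omega_{\mathrm{can}}$, and the master-equation equivalence is insensitive to it in any case.
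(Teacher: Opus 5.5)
Your proposal is correct and is essentially the paper's own proof. Your bracket formula gives $\{x^i,p_j\}=\delta^i_j$, and with it the sign comes out exactly as stated, so no adjustment of $\omega_{\mathrm{can}}$ is needed; the two equal contributions for the degree-two $S_\pi$ give $\pi^{\ell i}\partial_\ell\pi^{jk}p_ip_jp_k$, and cyclic averaging against the totally antisymmetric $p_ip_jp_k$ yields $\tfrac13\cdot\tfrac12=\tfrac16$. Your observation that the coefficients of the cubic are totally antisymmetric, so it vanishes iff $[\pi,\pi]_{\mathrm{Sch}}=0$, spells out the final ``consequently'' step that the paper leaves implicit.
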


\begin{proof}
Use the canonical degree-$(-1)$ bracket determined by $\{x^i,p_j\}=\delta^i_j$ and the graded Leibniz rule. Since $|S_\pi|=2$, the two contributions to its self-bracket are equal, and therefore
\[
 \{S_\pi,S_\pi\}_{\mathrm{can}}
 =2\left(\frac12\partial_\ell\pi^{ij}\,p_ip_j\right)
   \left(\pi^{\ell k}p_k\right)
 =\pi^{\ell k}\partial_\ell\pi^{ij}\,p_ip_jp_k.
\]
The product $p_ip_jp_k$ is totally antisymmetric. Averaging the coefficient over the three cyclic permutations gives
\[
 \pi^{\ell k}\partial_\ell\pi^{ij}\,p_ip_jp_k
 =\frac13\bigl(
 \pi^{\ell i}\partial_\ell\pi^{jk}
 +\pi^{\ell j}\partial_\ell\pi^{ki}
 +\pi^{\ell k}\partial_\ell\pi^{ij}
 \bigr)p_ip_jp_k.
\]
With the Schouten convention in the statement, the parenthesis is
$\tfrac12[\pi,\pi]_{\mathrm{Sch}}^{ijk}$, which proves the coefficient $1/6$.
\end{proof}

\subsection{A reduction-compatible descent criterion}

We now formulate the comparison principle used later for gravity.

\begin{assumption}[Clean corner reduction]\label{ass:clean}
The following packages, in the form convenient here, the reduction of a pre-Dirac corner
structure carried out in \cite[Section~3.1.2]{CFT2026}: the constraint submanifold, the
quotient by the kernel of the residual one-forms, and the requirement that the reduced
structure be the graph of a Poisson bivector are theirs, and a theory satisfying the
conclusion is called \emph{good on $\Gamma$} in \cite[Definition~3.13]{CFT2026}.  What is
specific to the present formulation is that the input is a transgressed AKSZ descendant, so
that the hypotheses can be compared with the face-incidence data of \cref{thm:facewise}.

Let
\[
 (\F_\Gamma^{\mathrm{pre}},\omega_\Gamma^{\mathrm{pre}},
  S_\Gamma^{\mathrm{pre}},\Q_\Gamma^{\mathrm{pre}})
\]
be a raw codimension-two descendant, where $\omega_\Gamma^{\mathrm{pre}}$ may be degenerate. Assume there exist:
\begin{itemize}[leftmargin=2.2em]
\item a $\Q_\Gamma^{\mathrm{pre}}$-invariant constraint submanifold $\mathcal C_\Gamma\subset\F_\Gamma^{\mathrm{pre}}$; and
\item a surjective submersion
\[
 q:\mathcal C_\Gamma\longrightarrow P_\Gamma.
\]
\end{itemize}
Assume further that:
\begin{enumerate}[label=(\alph*)]
\item the characteristic distribution of the presymplectic form $\omega_\Gamma^{\mathrm{pre}}$ has constant rank and is the vertical distribution of $q$;
\item the reduced Dirac structure is smooth and equals $\Graph(\pi_\Gamma)$ for a Poisson bivector $\pi_\Gamma$;
\item $\Q_\Gamma^{\mathrm{pre}}$ is projectable and the projectable Hamiltonian functionals descend to the Poisson algebra of $(P_\Gamma,\pi_\Gamma)$.
\end{enumerate}
\end{assumption}

\begin{proposition}[Strictification after Dirac reduction; cf.\ {\cite[Remark~3.14]{CFT2026}}]\label{prop:strictify}
Under \cref{ass:clean}, the reduced corner theory has a canonical strict $\bfV{2}$ completion given by
\[
 \F_\Gamma^{\mathrm{red}}=T^*[1]P_\Gamma,
 \qquad
 \omega_\Gamma^{\mathrm{red}}=\omega_{\mathrm{can}},
 \qquad
 S_\Gamma^{\mathrm{red}}=S_{\pi_\Gamma}.
\]
This completion is independent of the choice of representatives in $\mathcal C_\Gamma$.  If two clean reductions yield Poisson-isomorphic reduced manifolds, their strict $\bfV{2}$ completions are related by a graded symplectomorphism.
\end{proposition}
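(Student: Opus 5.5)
The plan has three steps: build the strict theory, show it depends only on $(P_\Gamma,\pi_\Gamma)$, and check naturality under Poisson isomorphisms.

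\emph{Existence.} By \cref{ass:clean}(b) the reduced Dirac structure is $\Graph(\pi_\Gamma)$. \Cref{lem:poisson-graph}, applied in the direction \emph{involutive implies Poisson}, gives $[\pi_\Gamma,\pi_\Gamma]_{\mathrm{Sch}}=0$. \Cref{lem:shifted-cotangent} then gives $\{S_{\pi_\Gamma},S_{\pi_\Gamma}\}_{\mathrm{can}}=0$ on $T^*[1]P_\Gamma$. Now set $\Q^{\mathrm{red}}_\Gamma=\{S_{\pi_\Gamma},-\}$. Its square is the Hamiltonian vector field of $\tfrac12\{S_{\pi_\Gamma},S_{\pi_\Gamma}\}_{\mathrm{can}}=0$, exactly as in the argument after \cref{def:target}. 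The degrees are right: $\omega_{\mathrm{can}}$ has degree $1$ and $S_{\pi_\Gamma}$ has degree $2$, as \cref{prop:degrees} requires for $r=2$. Since $\omega_{\mathrm{can}}$ is nondegenerate, the quadruple \eqref{eq:canonicalbf2v} is a strict $\bfV{2}$ manifold. This step is routine.

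\emph{Independence of representatives.} The data $(T^*[1]P_\Gamma,\omega_{\mathrm{can}},S_{\pi_\Gamma})$ are built from $P_\Gamma$ and $\pi_\Gamma$ alone. So it suffices to show that $\pi_\Gamma$ is well defined on $P_\Gamma$, i.e.\ that the bracket of two $q$-basic functionals does not depend on the point of the fibre $q^{-1}(x)\subset\mathcal C_\Gamma$ at which it is evaluated.
\begin{enumerate}[label=(\roman*)]
\item By \cref{ass:clean}(a), the fibres of $q$ are the integral leaves of $\ker\omega^{\mathrm{pre}}_\Gamma|_{\mathcal C_\Gamma}$.
\item Hamiltonian vector fields of basic functionals are defined modulo this kernel.
\item The pairing $\omega^{\mathrm{pre}}_\Gamma(X_f,X_g)$ is therefore unchanged along kernel directions.
\item By \cref{ass:clean}(c), projectable Hamiltonians descend, so $\{f,g\}$ is again basic.
\end{enumerate}
Hence the reduced bracket, and with it $\pi_\Gamma$ and $S_{\pi_\Gamma}$, is independent of the representatives chosen in $\mathcal C_\Gamma$.

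\emph{Naturality.} Let $\phi:(P_\Gamma,\pi_\Gamma)\to(P'_\Gamma,\pi'_\Gamma)$ be a Poisson isomorphism. I would take its cotangent lift $T^*[1]\phi=(\phi,(\phi^{-1})^*)$, which is a degree-preserving graded symplectomorphism of $\omega_{\mathrm{can}}$. In coordinates, pulling back $S_{\pi'}=\tfrac12\pi'^{ij}p_ip_j$ gives $\tfrac12(\phi^*\pi')^{ij}p_ip_j=S_{\pi}$, because $\phi_*\pi=\pi'$. Consequently the lift also intertwines the vector fields $\Q_\pi$ and $\Q_{\pi'}$.

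The main obstacle I expect is the well-definedness step in infinite dimensions. Constant rank and the submersion hypothesis only give the quotient formally. I would stay inside \cref{ass:formal} and use the separating pairing exactly as in the proof of \cref{lem:poisson-graph}, so that no finite rank count is needed.
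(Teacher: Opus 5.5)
Your proposal is correct and follows the paper's proof step for step: existence from \cref{ass:clean}(b) via \cref{lem:poisson-graph} and \cref{lem:shifted-cotangent}, independence of representatives because the data depend only on $(P_\Gamma,\pi_\Gamma)$, and naturality via the shifted cotangent lift $T^*[1]\varphi$. The one difference is your attempt to re-derive well-definedness of $\pi_\Gamma$ from the brackets $\omega^{\mathrm{pre}}_\Gamma(X_f,X_g)$ of basic functionals, which is best dropped, for three reasons: it is unnecessary, since the paper takes $\pi_\Gamma$ directly as a tensor on $P_\Gamma$ from \cref{ass:clean}(b), with descent supplied by (a); your step (iii) only shows independence of the choice of $X_f$, not constancy along the leaves; and a degree-one $\omega^{\mathrm{pre}}_\Gamma$ induces a bracket of degree $-1$, not the degree-zero bracket of $\pi_\Gamma$.
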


\begin{proof}
\cref{ass:clean}(a) says that functions and projectable tensors constant along the characteristic leaves descend uniquely through $q$.  \cref{ass:clean}(b), together with \cref{lem:poisson-graph}, implies that the descended bivector $\pi_\Gamma$ is Poisson.  By \cref{lem:shifted-cotangent}, the degree-two function $S_{\pi_\Gamma}$ on $T^*[1]P_\Gamma$ satisfies
\[
 \{S_{\pi_\Gamma},S_{\pi_\Gamma}\}_{\mathrm{can}}=0.
\]
Its Hamiltonian vector field is therefore cohomological, and the canonical degree-one form is symplectic.  This proves that the displayed data form a strict $\bfV{2}$ manifold.

The construction uses only the reduced tensor $\pi_\Gamma$, so changing a representative in a $q$-fiber changes none of the data.  If $\varphi:P_\Gamma\to P_\Gamma'$ is a Poisson diffeomorphism, its shifted cotangent lift
\[
 T^*[1]\varphi:T^*[1]P_\Gamma\longrightarrow T^*[1]P_\Gamma'
\]
preserves the canonical symplectic forms and carries $S_{\pi_\Gamma}$ to $S_{\pi_\Gamma'}$.  Hence it is an isomorphism of strict $\bfV{2}$ manifolds.
\end{proof}

\begin{remark}[Scope of the reduction criterion]\label{rem:reduction-scope}
The proposition concerns irreducible theories.  When the gauge symmetry is reducible, the reduced corner object is in general not a Poisson bivector but a higher structure $\Pi=\Pi_1+\Pi_2$ satisfying the Maurer--Cartan equation, with $\Pi_1$ cohomological and $\Pi_2$ a $\Pi_1$-invariant Poisson bivector; equivalently, a Dirac structure with support.  This already occurs for four-dimensional $BF$ theory, where the ghost-for-ghost field contributes; see \cite[Remarks~3.15 and~3.19--3.21]{CFT2026}.  Thus \cref{ass:clean}(b) should be read as excluding that case.

The proposition also concerns a single corner.  It does not assert that the strictified data are compatible with the face codifferential \eqref{eq:dface}, so the reduced corner theory is not claimed to sit inside the descent system of \cref{thm:facewise}.  Nor does it assert that every presymplectic AKSZ descendant admits a clean reduction, or that reduction automatically commutes with all BV pushforwards.  These are substantive analytic and geometric questions.  The statement isolates a sufficient condition under which a singular corner theory acquires a canonical strictification.
\end{remark}

\section{The ordinary-corner geometry of Palatini--Cartan gravity}

\subsection{The source geometry of the reduced Dirac construction}

Cattaneo, Fila-Robattino, and Tecchiolli \cite[\S2.1]{CFT2026} work with a stratified four-manifold
\[
 M=M^{[0]}\sqcup M^{[1]}\sqcup M^{[2]},
\]
whose points of codimension $k$ have neighborhoods modeled on
\[
 [0,\infty)^k\times\mathbb R^{4-k}.
\]
Thus their corners are ordinary corners.  In particular, near a connected codimension-two corner $\Gamma$ one has two independent normal directions and the local normal monoid is $\mathbb N^2$.  If $\Sigma$ is one adjacent boundary hypersurface, a collar is written
\[
 \Sigma_\varepsilon\simeq\Gamma\times[0,\varepsilon].
\]
This is exactly the local square underlying \cref{fig:square}.  No non-free monoid or Joyce generalized corner enters their construction.

\subsection{Why reduction precedes strictification}

The earlier BFV-induced corner construction for Palatini--Cartan gravity produces a singular
presymplectic structure and a local $P_\infty$ algebra rather than a strict $\bfV{2}$ manifold
\cite{CanepaCattaneoCorner}; we follow the summary of that construction given in
\cite[Section~1]{CFT2026}.  The obstruction to a direct BV--BFV implementation for the naive Palatini--Cartan--Holst action is established in \cite{CattaneoSchiavinaPCH}; see \cite{CattaneoGravityBoundary} for a later survey.  The construction of \cite{CFT2026} instead begins with the boundary constraint algebra, produces a pre-Dirac structure on the space of corner fields \cite[Section~3.1.2]{CFT2026}, and performs a geometric reduction.  On the reduced space $P_\Gamma$ it proves
\[
 D_\Gamma=\Graph(\pi_\Gamma),
\]
where $\pi_\Gamma$ is Poisson \cite[Theorem~5.29]{CFT2026}, the maximality of the reduced
Dirac structure following from the same statement, with the computation carried out in their
Appendix~B.2.  The strict $\bfV{2}$ theory is then the graded encoding of this reduced Poisson
geometry.

After field redefinitions, the paper obtains an affine-Poisson form of the corner action \cite[Proposition~6.6]{CFT2026},
\begin{equation}\label{eq:pcschematic}
 S^{\mathrm{PC}}_\Gamma
 =\int_\Gamma
 \left(
 \frac12 E[c,c]
 +\mu\,e\,\dd_{\omega_0}c
 +\frac12\mu^2F_{\omega_0}
 \right),
\end{equation}
with a degree-one symplectic structure pairing the reduced corner fields with their ghosts.  Here $e$ is the coframe, $\omega_0$ a reference connection with curvature $F_{\omega_0}$ and covariant derivative $\dd_{\omega_0}$, and $E=\tfrac12e^2$ is one of the two reduced corner variables, the other being $\Omega=e(\omega-\omega_0)$.  The field $c$ is the ghost of internal Lorentz transformations.  The field $\mu$ collects the remaining ghosts through
\[
 \mu=\iota_\zeta e+\lambda\epsilon_n+\eta\epsilon_m,
\]
where $\epsilon_n$ and $\epsilon_m$ are sections of the Minkowski bundle completing $e$ to a
local frame, $\epsilon_n$ being normal to $\Sigma$ and $\epsilon_m$ normal to $\Gamma$ inside
$\Sigma$.  Only $\epsilon_n$ is field-independent: $\epsilon_m$ is fixed by the normalization
conditions $[e,\epsilon_m]=[\epsilon_n,\epsilon_m]=0$ and $[\epsilon_m,\epsilon_m]=\pm1$, and
therefore depends on the coframe, so that $\delta\epsilon_m\neq0$
\cite[Definition~5.10 and Remark~5.13]{CFT2026}.  Here $\zeta$ generates diffeomorphisms tangent to $\Gamma$, $\eta$ the direction normal to $\Gamma$ inside $\Sigma$, and $\lambda$ the deformation normal to $\Sigma$.  Formula \eqref{eq:pcschematic} is BF-like but contains an affine curvature term determined by the background connection.  Moreover, \cite[Remark~6.7]{CFT2026} identifies it with the $\bfV{2}$ structure of four-dimensional $BF$ theory for $\mathfrak{so}(3,1)$ under the parametrization
\[
 A=\omega,\qquad B=E,\qquad \tau=e\mu,\qquad
 \phi=\tfrac12\mu^2,\qquad B^\dagger=0.
\]
Here $B^\dagger=0$ removes the degree-one component $\Pi_1$ of the $P_\infty$ structure discussed in
\cref{rem:reduction-scope}; the ghost-for-ghost $\phi$ itself is retained, with the nonzero value
$\phi=\tfrac12\mu^2$.  Compare \cite[Remark~3.20]{CFT2026}, where setting both $B^\dagger=0$ and
$\phi=0$ returns the purely Poisson corner structure.

In the language of \cref{ass:clean}, the substantive theorem of \cite{CFT2026} is the existence of a reduced manifold $P_\Gamma$ and a Poisson graph.  Once this has been established, \cref{prop:strictify} explains abstractly why a strict $\bfV{2}$ theory follows.

\subsection{What an AKSZ reconstruction must prove}

A successful AKSZ reconstruction of the gravitational corner theory must do more than observe that $\Gamma$ is an ordinary corner.  The construction of \cite{CFT2026} proceeds along a single boundary hypersurface and its infinitesimal collar, as summarized in \cite[Section~3.1.2]{CFT2026}.  An AKSZ reconstruction must reproduce the chain
\[
\begin{tikzcd}[column sep=2.1em,row sep=large]
 \F_M^{\mathrm{BV}} \arrow[r,"\mathrm{res}"]
 & \F_\Sigma^{\mathrm{BFV}} \arrow[r,"\mathrm{res}"]
 & \F_{\Sigma_\varepsilon} \arrow[r,"\mathrm{res}"]
 & \F_\Gamma^{\mathrm{pre}} \arrow[r,"\mathrm{red}"]
 & T^*[1]P_\Gamma
\end{tikzcd}
\]
with the following properties:
\begin{enumerate}[label=(\roman*),leftmargin=2.2em]
\item the boundary theory used as target is the correct reduced BFV theory of Palatini--Cartan gravity;
\item the raw corner descendant agrees with the pre-Dirac or pre-$\bfV{2}$ data before reduction;
\item the reduction is compatible with the cohomological vector field;
\item the construction compares consistently the two boundary hypersurfaces adjacent to $\Gamma$, with their ordered restrictions related by the incidence signs of \cref{cor:cornersquare};
\item the resulting Poisson bivector is the one computed in \cite{CFT2026}.
\end{enumerate}
The first and second points are obstructed for the naive Palatini--Cartan BV--BFV formulation;
this is why the ordinary-corner AKSZ theorem alone does not settle the gravitational problem.
The question is raised independently in \cite[Section~7]{CFT2026}, where the authors ask whether
there is a boundary BFV theory whose codimension-two descendant reproduces their strict
$\bfV{2}$ structure, and suggest that it might be related to the BFV theory of
\cite{CanepaCattaneoSchiavina} by a partial coisotropic reduction.  The list above is the form
that question takes once one insists, in addition, on compatibility with the face-incidence
structure of \cref{thm:facewise}.

This does not mean that the chain is unattainable, and the existing literature already
supplies a partial route.  Canepa and Cattaneo obtain the corner structure of
Palatini--Cartan gravity by a BV pushforward from the bulk description, and observe that
the same structure is reached from the corresponding BFV data by a construction of AKSZ
type \cite{CanepaCattaneoPushforward}; \cite{CFT2026} records both routes in its
introduction.  Items (i) and (ii) above are precisely the points at which that
observation would have to be upgraded: the pushforward produces the corner data, but it
does not exhibit a single intrinsic target $\Y$ on $\Gamma$ whose transgression is the
reduced theory, nor does it identify the raw descendant before reduction.  The
reconstruction problem is therefore not the existence of an AKSZ-type description, but
its intrinsic and facewise form, compatible with the incidence structure of
\cref{thm:facewise}.

\section{A model calculation: four-dimensional BF theory}\label{sec:bf}

The regular model against which gravity should be compared is four-dimensional BF theory.
On superfields we write $\dd_F$ for the source de Rham differential denoted $D_F$ in
\cref{sec:transgression}, and $\dd_{\mathcal A}=\dd_F+[\mathcal A,-]$ for the associated
covariant differential.  The
corner $\bfV{2}$ data recalled in this section are those of \cite[\S3.2.2]{CFT2026}, in the
superfield form used there; the contribution of the present section is the explicit orientation
bookkeeping of \cref{prop:four-corners} on $\Gamma\times[0,1]^2$.  Let $\mathfrak g$ be a finite-dimensional Lie algebra.  We fix in addition a nondegenerate invariant pairing on $\mathfrak g$; this is not needed for the target itself, but it lets us identify $\mathfrak g^*$ with $\mathfrak g$ and write both superfields with values in $\mathfrak g$.  The AKSZ target may be written as
\[
 \Y=T^*[3](\mathfrak g[1]),
\]
with canonical degree-three symplectic form.  In superfield notation the fields on an oriented face $F$ are
\[
 \mathcal A\in\Omega^\bullet(F,\mathfrak g)[1],
 \qquad
 \mathcal B\in\Omega^\bullet(F,\mathfrak g)[2].
\]
The transgressed structures are
\begin{align}
 \omega_F&=\int_F\langle\delta\mathcal B,\delta\mathcal A\rangle,
 \label{eq:bfomega}\\
 S_F&=\int_F\left\langle
 \mathcal B,\dd_F\mathcal A+\frac12[\mathcal A,\mathcal A]
 \right\rangle.
 \label{eq:bfaction}
\end{align}
By \cref{prop:degrees}, a face of codimension $r$ in a four-manifold carries $|\omega_F|=r-1$ and $|S_F|=r$.  For a codimension-two closed surface $\Gamma$,
\[
 |\omega_\Gamma|=1,
 \qquad
 |S_\Gamma|=2,
\]
and \eqref{eq:bfomega}--\eqref{eq:bfaction} give a strict $\bfV{2}$ theory without further reduction.  On a corner reached through two boundary hypersurfaces, the two ordered restrictions of $(\mathcal A,\mathcal B)$ coincide, while the orientation signs cancel exactly as in \cref{cor:cornersquare}.

\begin{remark}[Two routes to the BF corner]\label{rem:two-routes}
Four-dimensional BF theory is a reducible gauge theory, and \cref{rem:reduction-scope} noted that for such theories the classical Dirac reduction of the corner produces a higher structure rather than a Poisson bivector.  There is no conflict with the previous paragraph.  The two routes start from different data.  AKSZ transgression begins with an exact QP-target whose superfields already contain the ghost-for-ghost, so the strict $\bfV{2}$ structure is present from the outset and no reduction is performed.  The classical route of \cite{CFT2026} begins with the boundary constraint algebra, where reducibility appears as an obstruction to maximality of the induced Dirac structure and is resolved by a $P_\infty$ enhancement; see \cite[Remarks~3.15 and~3.19--3.21]{CFT2026}.  The comparison of the two descriptions for a reducible theory is not carried out here.
\end{remark}

\subsection{The explicit product corner \texorpdfstring{$\Gamma\times[0,1]^2$}{Gamma times the unit square}}

Let $\Gamma$ be a closed oriented surface and set
\[
 M=\Gamma\times[0,1]_t\times[0,1]_s,
 \qquad
 \operatorname{or}(M)=\dd t\wedge\dd s\wedge\operatorname{or}(\Gamma).
\]
Choose product orientations on the four hypersurfaces
\begin{align*}
 \Sigma_t^\varepsilon&=\Gamma\times\{\varepsilon\}\times[0,1]_s,
 &\operatorname{or}(\Sigma_t^\varepsilon)&=\dd s\wedge\operatorname{or}(\Gamma),\\
 \Sigma_s^\varepsilon&=\Gamma\times[0,1]_t\times\{\varepsilon\},
 &\operatorname{or}(\Sigma_s^\varepsilon)&=\dd t\wedge\operatorname{or}(\Gamma).
\end{align*}
We orient every corner by $\operatorname{or}(\Gamma_{\varepsilon\eta})=\operatorname{or}(\Gamma)$, where
\[
 \Gamma_{\varepsilon\eta}=\Gamma\times\{t=\varepsilon,\,s=\eta\};
\]
the incidence numbers $[\Sigma:\Gamma_{\varepsilon\eta}]$ computed below depend on this choice.  The base square is drawn in \cref{fig:product-corner}.  With these fixed orientations, the oriented boundary is
\begin{equation}\label{eq:product-boundary}
 \partial M=\Sigma_t^1-\Sigma_t^0-\Sigma_s^1+\Sigma_s^0.
\end{equation}
For any oriented face $F$ define
\[
 \alpha_F^{\mathrm{BF}}=\int_F\langle\mathcal B,\delta\mathcal A\rangle,
 \qquad
 \omega_F^{\mathrm{BF}}=\int_F\langle\delta\mathcal B,\delta\mathcal A\rangle,
\]
and let $S_F^{\mathrm{BF}}$ be \eqref{eq:bfaction}.  Here, and throughout this section, $\int_F$
denotes fibre integration $(p_F)_*$ over $T[1]F$, so that $\delta\circ(p_F)_*=(-1)^{\dim F}(p_F)_*\circ\delta$
as in \cref{app:signs}; with this reading $\omega_F^{\mathrm{BF}}=(-1)^{\dim F}\delta\alpha_F^{\mathrm{BF}}$,
in agreement with \eqref{eq:exactF} on faces of every parity.  The bulk identity is therefore
\begin{equation}\label{eq:BF-bulk-product}
 \iota_{\Q_M}\omega_M^{\mathrm{BF}}
 =\delta S_M^{\mathrm{BF}}
 +\rho_{M\Sigma_t^1}^*\alpha_{\Sigma_t^1}^{\mathrm{BF}}
 -\rho_{M\Sigma_t^0}^*\alpha_{\Sigma_t^0}^{\mathrm{BF}}
 -\rho_{M\Sigma_s^1}^*\alpha_{\Sigma_s^1}^{\mathrm{BF}}
 +\rho_{M\Sigma_s^0}^*\alpha_{\Sigma_s^0}^{\mathrm{BF}}.
\end{equation}
On each three-dimensional hypersurface the same calculation gives
\begin{equation}\label{eq:BF-face-product}
 \iota_{\Q_\Sigma}\omega_\Sigma^{\mathrm{BF}}
 =-\delta S_\Sigma^{\mathrm{BF}}
 +\sum_{\Gamma_{\varepsilon\eta}\prec\Sigma}
 [\Sigma:\Gamma_{\varepsilon\eta}]
 \rho_{\Sigma,\Gamma_{\varepsilon\eta}}^*
 \alpha_{\Gamma_{\varepsilon\eta}}^{\mathrm{BF}}.
\end{equation}
Finally, every $\Gamma_{\varepsilon\eta}$ is closed, so
\begin{equation}\label{eq:BF-corner-product}
 \iota_{\Q_{\Gamma_{\varepsilon\eta}}}
 \omega_{\Gamma_{\varepsilon\eta}}^{\mathrm{BF}}
 =\delta S_{\Gamma_{\varepsilon\eta}}^{\mathrm{BF}}.
\end{equation}

\begin{proposition}[Explicit cancellation at the four product corners]\label{prop:four-corners}
For each $\Gamma_{\varepsilon\eta}$, the two coefficients of the twice-iterated BF primitive are opposite.  With the product orientations above, the incidence products are
\[
\begin{array}{c|cc}
 & \text{path through }\Sigma_t^\varepsilon
 & \text{path through }\Sigma_s^\eta\\
\hline
\Gamma_{00} & (-1)(-1)=+1 & (+1)(-1)=-1\\
\Gamma_{01} & (-1)(+1)=-1 & (-1)(-1)=+1\\
\Gamma_{10} & (+1)(-1)=-1 & (+1)(+1)=+1\\
\Gamma_{11} & (+1)(+1)=+1 & (-1)(+1)=-1.
\end{array}
\]
Consequently the codimension-two contribution to the iterated defect of \eqref{eq:BF-bulk-product} is zero at every corner.
\end{proposition}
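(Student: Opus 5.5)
The plan is to compute the eight incidence numbers directly from the outward-normal-first convention, read off the table, and then invoke \cref{cor:cornersquare}. Concretely, I will implement the convention by contracting the ambient orientation form with an outward-pointing vector, as in \cref{app:signs}. If $F$ carries the orientation form $\mathrm{or}(F)$ and $\nu$ is outward along the face $G\prec F$, the induced orientation on $G$ is $\iota_\nu\,\mathrm{or}(F)$ restricted to $G$. Then $[F:G]=\pm1$ according as this agrees with the fixed orientation of $G$ or not. The check at the end is that the level-one numbers obtained this way reproduce \eqref{eq:product-boundary}.

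\emph{Level one (bulk to hypersurfaces).} With $\mathrm{or}(M)=\dd t\wedge\dd s\wedge\mathrm{or}(\Gamma)$, the outward normal on $\Sigma_t^1$ is $\partial_t$, and $\iota_{\partial_t}\mathrm{or}(M)=\dd s\wedge\mathrm{or}(\Gamma)$. On $\Sigma_t^0$ the outward normal is $-\partial_t$. Hence $[M:\Sigma_t^1]=+1$ and $[M:\Sigma_t^0]=-1$. For the $s$-faces, $\iota_{\partial_s}\mathrm{or}(M)=-\dd t\wedge\mathrm{or}(\Gamma)$, which gives $[M:\Sigma_s^1]=-1$ and $[M:\Sigma_s^0]=+1$. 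This agrees with \eqref{eq:product-boundary}, and so with the signs in \eqref{eq:BF-bulk-product}.

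\emph{Level two (hypersurfaces to corners).} On $\Sigma_t^\varepsilon$, oriented by $\dd s\wedge\mathrm{or}(\Gamma)$, the corner $\Gamma_{\varepsilon\eta}$ has outward normal $\pm\partial_s$, with the plus sign when $\eta=1$. Contraction yields $\pm\mathrm{or}(\Gamma)$, so $[\Sigma_t^\varepsilon:\Gamma_{\varepsilon\eta}]=+1$ if $\eta=1$ and $-1$ if $\eta=0$. Symmetrically, on $\Sigma_s^\eta$, oriented by $\dd t\wedge\mathrm{or}(\Gamma)$, we get $[\Sigma_s^\eta:\Gamma_{\varepsilon\eta}]=+1$ if $\varepsilon=1$ and $-1$ if $\varepsilon=0$. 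Multiplying the level-one and level-two numbers along the two paths gives the table entry by entry. For example, at $\Gamma_{00}$ the two paths give $(-1)(-1)=+1$ and $(+1)(-1)=-1$. In every row the two products are opposite, which is the concrete instance of \cref{lem:incidence}.

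\emph{Conclusion.} The codimension-two contribution at $\Gamma_{\varepsilon\eta}$ to the iterated defect is obtained by substituting \eqref{eq:BF-face-product} into \eqref{eq:BF-bulk-product}. This contribution is $\sum_G[M:G][G:\Gamma_{\varepsilon\eta}](\rho_{G\Gamma_{\varepsilon\eta}}\circ\rho_{MG})^*\alpha^{\mathrm{BF}}_{\Gamma_{\varepsilon\eta}}$. Both composites of restrictions equal $\rho_{M\Gamma_{\varepsilon\eta}}$, since restricting the superfields $(\mathcal A,\mathcal B)$ is functorial. Therefore the contribution equals the sum of the two table entries times a single form, and it vanishes, exactly as in \cref{cor:cornersquare}.

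The main obstacle is purely conventional. I must make sure that outward-normal-first means contraction by the outward vector placed in the first slot, with no extra parity sign such as $(-1)^{\dim}$. I also need the product orientations of the hypersurfaces and the chosen orientation $\mathrm{or}(\Gamma)$ on every corner to be used consistently. I will resolve this by the level-one consistency check against \eqref{eq:product-boundary}. The cancellation itself is insensitive to this issue: any uniform convention flips both entries of a row together.
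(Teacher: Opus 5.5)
Your proposal is correct and follows essentially the same route as the paper. The first factor in each table entry is the level-one incidence, which the paper reads off from \eqref{eq:product-boundary} and you rederive by contraction as a consistency check. The second factor is the oriented boundary of the product-oriented interval inside each hypersurface, and the cancellation then rests on functoriality of restriction, so that both paths pull back the same form $\rho_{M\Gamma_{\varepsilon\eta}}^*\alpha^{\mathrm{BF}}_{\Gamma_{\varepsilon\eta}}$. One small citation slip: the contraction-by-outward-normal convention you use is the one written out in \cref{app:cornersign}, not \cref{app:signs}.
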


\begin{proof}
Equation \eqref{eq:product-boundary} gives the first factor in every table entry.  The second factor is obtained by taking the oriented boundary of the product-oriented interval in the corresponding hypersurface.  For example,
\[
 [M:\Sigma_t^0]=-1,
 \qquad
 [\Sigma_t^0:\Gamma_{00}]=-1,
\]
while
\[
 [M:\Sigma_s^0]=+1,
 \qquad
 [\Sigma_s^0:\Gamma_{00}]=-1.
\]
Thus the two paths to $\Gamma_{00}$ contribute $+\alpha_{\Gamma_{00}}^{\mathrm{BF}}$ and $-\alpha_{\Gamma_{00}}^{\mathrm{BF}}$.  The other rows follow identically.  The restrictions of $\mathcal A$ and $\mathcal B$ along the two paths are equal by functoriality, so the opposite coefficients cancel the same field-space one-form rather than merely two isomorphic copies.
\end{proof}

\begin{figure}[ht]
\centering
\begin{tikzpicture}[scale=1.0,>=Latex]
\draw[->] (0,0) -- (4.5,0) node[right] {$t$};
\draw[->] (0,0) -- (0,3.4) node[above] {$s$};
\draw[thick] (0.4,0.4) rectangle (3.8,2.8);
\node[below] at (0.4,0.4) {$\Gamma_{00}$};
\node[below] at (3.8,0.4) {$\Gamma_{10}$};
\node[above] at (0.4,2.8) {$\Gamma_{01}$};
\node[above] at (3.8,2.8) {$\Gamma_{11}$};
\node[anchor=east] at (0.30,1.6) {$\Sigma_t^0$};
\node[anchor=west] at (3.90,1.6) {$\Sigma_t^1$};
\node[below] at (2.1,0.12) {$\Sigma_s^0$};
\node[above] at (2.1,3.05) {$\Sigma_s^1$};
\node[align=center,font=\small] at (2.1,1.65) {base square for\\$M=\Gamma\times[0,1]^2$};
\end{tikzpicture}
\caption{The explicit codimension-two product model.  Each vertex is reached by two ordered boundary paths whose incidence products are opposite.}
\label{fig:product-corner}
\end{figure}

\begin{proposition}[Classical master equation for the BF corner]\label{prop:BF-cme}
Let $\Gamma$ be a compact oriented surface, possibly with boundary, and let $\mathfrak g$ carry an invariant nondegenerate pairing.  Write $F_{\mathcal A}=\dd_\Gamma\mathcal A+\tfrac12[\mathcal A,\mathcal A]$ for the superfield curvature.  The structures \eqref{eq:bfomega}--\eqref{eq:bfaction} are formally nondegenerate, in the sense that
$X\mapsto\iota_X\omega_\Gamma$ is injective on local vector fields, and if $\partial\Gamma=\varnothing$ they satisfy
\[
 \iota_{\Q_\Gamma}\omega_\Gamma=\delta S_\Gamma,
 \qquad
 \{S_\Gamma,S_\Gamma\}=0.
\]
If $\partial\Gamma\neq\varnothing$, the first identity acquires exactly the transgressed
primitive of \cref{thm:facewise},
\[
 \iota_{\Q_\Gamma}\omega_\Gamma
 =\delta S_\Gamma
 +\sum_{G\prec\Gamma}[\Gamma:G]\,\rho_{\Gamma G}^*\alpha_G^{\mathrm{BF}},
 \qquad
 \alpha_G^{\mathrm{BF}}=\int_G\langle\mathcal B,\delta\mathcal A\rangle .
\]
\end{proposition}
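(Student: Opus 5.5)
The plan is to specialise the general machinery to the BF target $\Y=T^*[3](\mathfrak g[1])$ on $F=\Gamma$, $m=\dim\Gamma=2$, and then settle the three claims in turn: formal nondegeneracy, the Hamiltonian identity with boundary terms, and the master equation. All three need only earlier results plus a direct superfield computation.

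\emph{Nondegeneracy.} I will follow \cref{rem:nondeg}. The target form $\langle\delta\mathcal B,\delta\mathcal A\rangle$ is pointwise nondegenerate because the invariant pairing on $\mathfrak g$ is nondegenerate. The Berezin integral over $T[1]\Gamma$ pairs the form-degree-$p$ component of $\delta\mathcal A$ with the form-degree-$(2-p)$ component of $\delta\mathcal B$ through $\int_\Gamma\langle\cdot\wedge\cdot\rangle$. Take a local vector field $X=(X_{\mathcal A},X_{\mathcal B})$ with $\iota_X\omega_\Gamma=0$. Testing against variations $\delta\mathcal A$ supported in the interior of $\Gamma$ gives $X_{\mathcal B}=0$ by the fundamental lemma of the calculus of variations, and symmetrically $X_{\mathcal A}=0$. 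A boundary of $\Gamma$ does no harm here, because interior-supported test variations already separate. This is exactly the weak, local-form sense of nondegeneracy stated in the proposition.

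\emph{Hamiltonian identity.} I will apply \cref{thm:facewise} with $m=2$, so $(-1)^m=+1$. The target data are $\alpha_{\Y}=\langle \mathsf b,\dd\mathsf a\rangle$ and $\Theta=\tfrac12\langle\mathsf b,[\mathsf a,\mathsf a]\rangle$. One checks from \eqref{eq:SF} that $\mathsf T^1_\Gamma\alpha_{\Y}=\int_\Gamma\langle\mathcal B,\dd_\Gamma\mathcal A\rangle$ and $\mathsf T^0_\Gamma\Theta=\int_\Gamma\tfrac12\langle\mathcal B,[\mathcal A,\mathcal A]\rangle$, which together give \eqref{eq:bfaction}. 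Likewise $\alpha_G=\mathsf T^0_G\alpha_{\Y}=\alpha^{\mathrm{BF}}_G$. Equation \eqref{eq:main-descent} then yields the displayed identity for $\partial\Gamma\neq\varnothing$, and the sum is empty when $\partial\Gamma=\varnothing$.

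As an independent check on signs, I will also compute directly. Here $\Q_\Gamma\mathcal A=F_{\mathcal A}$ and $\Q_\Gamma\mathcal B=\dd_{\mathcal A}\mathcal B$. Varying $S_\Gamma$ gives $\int\langle\delta\mathcal B,F_{\mathcal A}\rangle+\int\langle\mathcal B,\dd_{\mathcal A}\delta\mathcal A\rangle$. Integrating the second term by parts via \eqref{eq:face-stokes} produces $\int\langle\dd_{\mathcal A}\mathcal B,\delta\mathcal A\rangle$ together with the boundary term $\sum_G[\Gamma:G]\int_G\langle\mathcal B,\delta\mathcal A\rangle$, with the graded signs as in \cref{app:transgression}.

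\emph{Master equation.} For closed $\Gamma$ I will invoke \cref{thm:closed-face} with $r=2$. The only input it needs is weak nondegeneracy, already established. Concretely, $\tfrac12[\Q_\Gamma,\Q_\Gamma]=0$ by \cref{prop:Q-square}, so $\{S_\Gamma,S_\Gamma\}$ is a constant of degree $3$, hence zero.

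I also plan a direct sanity check. Up to sign, $\{S_\Gamma,S_\Gamma\}$ is $2\int\langle\dd_{\mathcal A}\mathcal B,F_{\mathcal A}\rangle$. By the Bianchi identity $\dd_{\mathcal A}F_{\mathcal A}=0$ and invariance of the pairing, this equals $\int\dd_\Gamma\langle\mathcal B,F_{\mathcal A}\rangle=0$ on a closed surface.

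The main obstacle is sign bookkeeping. I need to confirm that the fibre-integration convention $\delta\circ(p_F)_*=(-1)^{\dim F}(p_F)_*\circ\delta$ makes the coefficient of $\delta S_\Gamma$ exactly $+1$, and that the boundary term carries $[\Gamma:G]$ with no extra sign. My first attempt will be to read both off from \cref{lem:transgression-stokes} with $k=0,1$ rather than recompute them, and to use the direct computation above only as a consistency check.
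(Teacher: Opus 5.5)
Your proposal is correct and follows the paper's proof. The Hamiltonian identity is the $m=2$ specialization of \cref{thm:facewise}; your identification $\mathsf T^1_\Gamma\alpha_{\Y}+\mathsf T^0_\Gamma\Theta=S_\Gamma$ and $\alpha_G=\alpha_G^{\mathrm{BF}}$ is exactly what that requires. The master equation comes from \cref{thm:closed-face}, and your interior-test-variation argument for nondegeneracy and your Bianchi-identity check of $\{S_\Gamma,S_\Gamma\}=0$ are harmless refinements of what the paper does.

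One correction to your optional direct check. Under the paper's convention $\delta\dd=-\dd\delta$ (with total parity) one has $\delta F_{\mathcal A}=-\dd_{\mathcal A}\delta\mathcal A$, so the first variation carries $-\int_\Gamma\langle\mathcal B,\dd_{\mathcal A}\delta\mathcal A\rangle$, not $+$. It is this sign that puts $-\sum_G[\Gamma:G]\int_G\langle\mathcal B,\delta\mathcal A\rangle$ into $\delta S_\Gamma$, which is opposite to the naive component computation, as the paper itself remarks.
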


\begin{proof}
The pairing identifies variations of $\mathcal A$ and $\mathcal B$, so
\[
 \omega_\Gamma=\int_\Gamma\langle\delta\mathcal B,\delta\mathcal A\rangle
\]
is formally nondegenerate in the stated sense.

The Hamiltonian identity is the specialization of \cref{thm:facewise} to the target
$\Y=T^*[3](\mathfrak g[1])$ and the face $F=\Gamma$, for which $m=2$ and hence
$(-1)^m=+1$; the cohomological vector field is the one fixed in \cref{prop:Q-square}.
When $\partial\Gamma=\varnothing$ the incidence sum is empty and
$\iota_{\Q_\Gamma}\omega_\Gamma=\delta S_\Gamma$.  Since $\Q_\Gamma^2=0$ by
\cref{prop:Q-square}, the classical master equation then follows from
\cref{thm:closed-face}.

It is worth recording the same computation in variational form, since the boundary
sign is the one place where the graded conventions matter.  Using
$\delta\circ(p_\Gamma)_*=(p_\Gamma)_*\circ\delta$ for $m=2$, the Leibniz rule for the
even superfield $\mathcal B$, the anticommutation $\delta D=-D\delta$ of
\cref{app:signs}, and graded Stokes \eqref{eq:face-stokes}, one obtains
\begin{equation}\label{eq:BF-variation}
 \delta S_\Gamma
 =\int_\Gamma\Bigl(
 \langle\delta\mathcal B,F_{\mathcal A}\rangle
 -\langle\dd_{\mathcal A}\mathcal B,\delta\mathcal A\rangle
 \Bigr)
 -\int_{\partial\Gamma}\langle\mathcal B,\delta\mathcal A\rangle ,
\end{equation}
the bulk integral being $\iota_{\Q_\Gamma}\omega_\Gamma$.  The boundary term carries
the sign opposite to the one produced by a component-level integration by parts, in
which $\delta$ and $\dd$ commute; with the graded convention it reproduces the
incidence sum of \cref{thm:facewise}.  Since \eqref{eq:BF-variation} is obtained directly
from the superfield action, without invoking \cref{thm:facewise}, it is an independent
check of the incidence sign in \eqref{eq:main-descent} on a face with nonempty boundary,
and not merely a specialization of it.
\end{proof}

The quantization of this corner theory has been studied separately \cite{BFQuant2026}; we use only its classical layer.  This example clarifies the relation to the affine-Poisson gravitational formula.  The first term of \eqref{eq:pcschematic} is of Lie--Poisson type, while the terms containing $\omega_0$ and $F_{\omega_0}$ deform the linear BF corner bivector by an affine cocycle.  The gravity construction therefore resembles a constrained and affinely shifted BF corner theory, but the constraint reduction is essential and cannot be discarded.

\section{Gluing and locality at corners}

A central reason for using BV--BFV data is compatibility with cutting and gluing.  Suppose $M=M_1\cup_\Sigma M_2$ is obtained by gluing two manifolds with faces along oppositely oriented boundary hypersurfaces.  At the level of fields, the matching condition for local AKSZ fields is
\begin{equation}\label{eq:fiberproduct}
 \F_M\simeq \F_{M_1}\times_{\F_\Sigma}\F_{M_2}.
\end{equation}
For manifolds with corners this fiber product is the one studied by Kottke and Melrose
\cite{KottkeMelrose}.  At the level of field spaces, the statement that gluing corresponds
to a fiber product of spaces of fields---and, more precisely, to a fiber product taken up to
homotopy, with the homotopy constructed explicitly in a range of BV--BFV examples---is due to
Cattaneo and Mnev \cite{CattaneoMnevGluing}.  We do not reprove it: we take
\eqref{eq:fiberproduct} as a hypothesis rather than as a construction, and the content of this
section is only the behaviour of the incidence signs under it.  The boundary primitives contributed by $M_1$ and $M_2$ cancel because the induced orientations on $\Sigma$ are opposite.  At a codimension-two corner $\Gamma\subset\partial\Sigma$, there are two additional cancellations:
\begin{enumerate}[label=(\alph*)]
\item cancellation between the two bulk pieces along $\Sigma$;
\item incidence cancellation between the two ordered normal directions meeting at $\Gamma$.
\end{enumerate}

\begin{proposition}[Formal corner-locality]\label{prop:gluing}
Let $M=M_1\cup_\Sigma M_2$ with $\Sigma$ a common boundary hypersurface carrying opposite induced orientations from the two pieces, and assume \eqref{eq:fiberproduct}.  Then the modified Hamiltonian identities of the pieces glue to the identity for the glued manifold, and no uncancelled codimension-two term remains.
\end{proposition}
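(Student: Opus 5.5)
The plan is to add the two instances of \cref{thm:facewise}, one for $M_1$ and one for $M_2$, and compare the sum with the instance for $M$. Pullback along the fiber-product projections $\pi_i:\F_M\to\F_{M_i}$ is available by \eqref{eq:fiberproduct}, with $\rho_{M_1\Sigma}\circ\pi_1=\rho_{M_2\Sigma}\circ\pi_2$. Transgression is a fibre integral and is therefore additive under decomposition of the source. This gives
\[
\omega_M=\pi_1^*\omega_{M_1}+\pi_2^*\omega_{M_2},\qquad
S_M=\pi_1^*S_{M_1}+\pi_2^*S_{M_2}.
\]
The vector field $\Q_M$ is $\pi_i$-related to $\Q_{M_i}$ by the same restriction argument as in \cref{prop:Q-square}. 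Pulling back \eqref{eq:main-descent} for each piece and adding then gives
\[
\iota_{\Q_M}\omega_M-(-1)^n\delta S_M=\sum_{i=1,2}\pi_i^*(\dd_{\mathrm{face}}\alpha)_{M_i}.
\]

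The next step is to sort the codimension-one faces of $M_1$ and $M_2$ into three classes.
\begin{enumerate}[label=(\roman*)]
\item The face $\Sigma$ itself. Its two contributions are $[M_1:\Sigma]\rho^*\alpha_\Sigma$ and $[M_2:\Sigma]\rho^*\alpha_\Sigma$. They are the same field-space form, because restrictions agree on the fiber product, and $[M_1:\Sigma]=-[M_2:\Sigma]$ because the induced orientations are opposite. So they cancel.
\item Faces disjoint from $\Sigma$. These are faces of $M$ with unchanged incidence numbers.
\item Pairs $G_1\prec M_1$ and $G_2\prec M_2$ that meet $\Sigma$ along a common codimension-two face $\Gamma\subset\partial\Sigma$ and glue to a face $G=G_1\cup_\Gamma G_2$ of $M$.
\end{enumerate}
For class (iii), additivity of transgression on $G$ gives $\alpha_G=\alpha_{G_1}+\alpha_{G_2}$ after pullback. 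The incidence numbers must agree, $[M_i:G_i]=[M:G]$, once the orientation of $G$ is chosen to restrict to those of $G_i$. This recovers $(\dd_{\mathrm{face}}\alpha)_M$ exactly.

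The codimension-two statement comes from applying $\dd_{\mathrm{face}}$ once more. This is equivalent to gluing the descent identities \eqref{eq:main-descent} on $\Sigma$, $G_1$ and $G_2$, and checking that $\Gamma$ leaves no residue. At $\Gamma$ there are four relevant incidences: $[G_1:\Gamma]$, $[G_2:\Gamma]$, and the two incidences $[\Sigma:\Gamma]$ seen from $M_1$ and from $M_2$. \cref{lem:incidence} applied inside $M_1$ and inside $M_2$ gives
\[
[M_1:G_1][G_1:\Gamma]=-[M_1:\Sigma][\Sigma:\Gamma],\qquad
[M_2:G_2][G_2:\Gamma]=-[M_2:\Sigma][\Sigma:\Gamma].
\]
Adding these and using $[M_1:\Sigma]=-[M_2:\Sigma]$ shows $[G_1:\Gamma]+[G_2:\Gamma]=0$ with the common orientation of $G$. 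This is cancellation (a): the interior seam $\Gamma$ is not a face of the glued $G$. Cancellation (b) is \cref{cor:cornersquare} for the genuine corners of $M$, which follows from $\dd_{\mathrm{face}}^2=0$ on $M$.

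I expect the main obstacle to be the orientation bookkeeping in class (iii). One has to show that a single orientation of the glued face $G$ is compatible with both $[M_i:G_i]$, and that the seam $\Gamma$ then carries opposite induced orientations from $G_1$ and $G_2$. I would settle this first in the local model $[0,\infty)\times\mathbb R\times\mathbb R^{n-2}$, cut along a hyperplane, using the coordinate computation of \cref{app:cornersign}. I would then extend it globally by local constancy on connected faces, as in the proof of \cref{lem:incidence}.
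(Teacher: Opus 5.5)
Your proposal is correct and follows the same route as the paper. Both pull the two instances of \eqref{eq:main-descent} back to the fiber product, use additivity of fibre integration for $\omega_M$ and $S_M$, cancel the two $\Sigma$-terms via $[M_1:\Sigma]=-[M_2:\Sigma]$, and apply \cref{lem:incidence} on each piece at a codimension-two face $\Gamma\subset\partial\Sigma$. Your treatment is more careful than the paper's in two places: you handle explicitly the faces $G_1,G_2$ that glue to a face $G$ of $M$, and you derive the seam cancellation $[G_1:\Gamma]+[G_2:\Gamma]=0$ from the two incidence identities, whereas the paper only says that every external face ``occurs once'' and leaves the seam to its later remark that $\Gamma$ is not a corner of $M$.
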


\begin{proof}
Write the descent identity for $M_1$ and $M_2$ and pull both identities to the fiber product of matching fields.  Additivity of fiber integration gives
\[
 \omega_M=\omega_{M_1}+\omega_{M_2},
 \qquad
 S_M=S_{M_1}+S_{M_2}.
\]
The hypersurface $\Sigma$ occurs in $\partial M_1$ and $\partial M_2$ with opposite induced orientations.  Hence its two primitive terms are
\[
 [M_1:\Sigma]\rho_1^*\alpha_\Sigma
 +[M_2:\Sigma]\rho_2^*\alpha_\Sigma=0
\]
on the matching locus.  Every external boundary face occurs once and therefore reproduces the boundary term for $M$.

Now let $\Gamma\subset\partial\Sigma$ be a codimension-two face.  Its coefficient in the iterated defect is
\[
 \sum_{G:\,\Gamma\prec G\prec M_i}[M_i:G][G:\Gamma].
\]
This coefficient vanishes separately on each piece by \cref{lem:incidence}; equivalently, the two ordered normal directions induce opposite orientations.  Thus no corner anomaly is created by adding the two descent identities.
\end{proof}

\begin{remark}
The conclusion can be stated more geometrically.  Near a point of $\Gamma=\partial\Sigma$ the two pieces contribute two quadrants glued along a half-line, so $\Gamma$ is not a corner of $M$ at all: it is a point of a boundary hypersurface of $M$.  There is therefore nothing left to cancel, and the vanishing of the two coefficients above is the algebraic shadow of this fact.  A matching condition on the two corner wedges is implicit in \eqref{eq:fiberproduct}.
\end{remark}

For reduced theories, this formal argument must be supplemented by compatibility of the reductions with fiber products.  This is one place where generalized corners may eventually be advantageous, because Joyce's category admits transverse fiber products under weaker combinatorial hypotheses than the category of ordinary corners \cite[\S4]{JoyceGCorner}.

\section{What has and has not been established}

The formalism is deliberately separated into proved statements, imported geometric input, and open extensions.

\subsection{Established under the standing hypothesis}

Within \cref{ass:formal}, this article proves:
\begin{enumerate}[label=(\roman*),leftmargin=2.2em]
\item the transgression--Stokes formula with face-incidence signs;
\item the facewise modified Hamiltonian identity in arbitrary codimension;
\item coherence of the restrictions and cohomological vector fields;
\item cancellation of the twice-iterated defect on every ordinary codimension-two corner;
\item strict $\bfV{r}$ data on closed nondegenerate faces;
\item that the shifted-cotangent strictification of \cite[Remark~3.14]{CFT2026} applies to a
transgressed codimension-two descendant once a clean Poisson reduction has been supplied, and is
then independent of the choice of representatives;
\item the explicit BF descent and all four orientation cancellations on $\Gamma\times[0,1]^2$.
\end{enumerate}

\subsection{Imported input}

The article does not rederive the boundary constraint algebra or the Dirac reduction of Palatini--Cartan gravity.  The existence of the reduced Poisson bivector $\pi_\Gamma$ is taken from \cite{CFT2026}.  Likewise, the existence of compatible BV--BFV models for gravity on cylinders and the relation to AKSZ-type constructions belong to the earlier literature \cite{CanepaCattaneoSchiavina,CanepaCattaneoPushforward}.

\subsection{Not claimed}

We do not claim:
\begin{enumerate}[label=(\roman*),leftmargin=2.2em]
\item a globally smooth Fr\'echet-manifold structure on every mapping space;
\item a theorem that singular gravitational reductions commute with restriction or BV pushforward;
\item a single finite-dimensional AKSZ target reconstructing four-dimensional Palatini--Cartan gravity on arbitrary cornered spacetimes;
\item a trace theorem for $b$-forms on Joyce generalized corners;
\item invariance of a generalized-corner construction under toric refinement.
\end{enumerate}
The last two items are proposed as central problems for the sequel.

\section{From ordinary to Joyce generalized corners: the next problem}\label{sec:gcorners}

This paper has used three special features of the local monoid $\mathbb N^r$.
\begin{enumerate}[label=(\roman*),leftmargin=2.2em]
\item The normal cone is simplicial and its codimension-one faces are generated independently.
\item Every codimension-two face is reached through exactly two orders of independent normal directions.
\item The face incidence complex is the Boolean complex, and its signs are the familiar alternating signs of a cube.
\end{enumerate}

For a Joyce generalized corner, the local model is
\[
 X_P=\operatorname{Hom}_{\mathrm{Mon}}(P,[0,\infty)),
\]
where $P$ is a weakly toric monoid.  When $P=\mathbb N^r\times\mathbb Z^{n-r}$ one recovers ordinary corners, but a general $P$ can have relations and a non-simplicial face lattice \cite{JoyceGCorner}.  The most elementary example is the positive real conifold
\[
 X_\square=\{(x_1,x_2,x_3,x_4)\in[0,\infty)^4:\ x_1x_2=x_3x_4\},
\]
which is three-dimensional and has four boundary hypersurfaces meeting at its vertex, rather than the three faces allowed by an ordinary $3$-corner.

The present construction suggests the following replacements.
\begin{center}
\begin{tabular}{>{\raggedright\arraybackslash}p{0.40\textwidth} >{\raggedright\arraybackslash}p{0.48\textwidth}}
\toprule
Ordinary corner & Generalized corner candidate\\
\midrule
free normal monoid $\mathbb N^r$ & weakly toric monoid $P$\\
Boolean face complex & oriented face complex of $P$\\
$T[1]X$ and ordinary de Rham differential & a dg source built from ${}^bT[1]X$ and the $b$-de Rham differential\\
ordinary Stokes integration & trace/renormalized integration with face residues\\
incidence cancellation of a cube & monoidal incidence identity in the face lattice of $P$\\
ordinary gluing fiber products & $b$-transverse fiber products in $\mathbf{Man}^{\mathbf{gc}}$\\
\bottomrule
\end{tabular}
\end{center}

The passage to generalized corners separates into three logically distinct obstructions.

\subsection{Combinatorial obstruction}

For $P=\mathbb N^r$, the interval of faces above a codimension-two face is a square: there are exactly two intermediate facets and the cancellation is pairwise.  For a general weakly toric monoid, the face lattice need not be Boolean.  The correct replacement of the Boolean cube is therefore an oriented incidence complex of faces of $P$.  Before defining any fields one must choose orientation lines and prove the monoidal analogue of
\[
 \sum_{G:\,H\prec G\prec F}[F:G][G:H]=0.
\]
The identity should follow from the cellular boundary of an oriented polyhedral cone, but its compatibility with Joyce coordinate changes has to be established.

\subsection{Differential-source obstruction}

The notation ${}^bT[1]X_P$ is a natural candidate because logarithmic tangent directions record the monoidal normal geometry.  It is not, by itself, a completed AKSZ source.  One must specify:
\begin{enumerate}[label=(\alph*),leftmargin=2.2em]
\item the graded algebra of functions to be identified with the chosen differential-form complex;
\item a degree-one homological vector field playing the role of the de Rham differential;
\item functorial restriction maps for all monoidal faces;
\item orientation data compatible with fiber integration.
\end{enumerate}
Moreover, using the $b$-de Rham complex is a choice rather than a formal consequence of Joyce's definition.  A competing construction may use ordinary forms on a resolution of $X_P$ and descend them after proving refinement invariance.

\subsection{Trace and Stokes obstruction}

If the $b$-de Rham route is chosen, local generators include logarithmic forms such as $\dd x/x$.  Their naive integrals can diverge at the boundary.  A generalized AKSZ trace must therefore be defined together with its domain and must satisfy a face-valued Stokes formula.  Possible mechanisms include compact-support conditions, residue maps, Hadamard finite parts, or pushforward from a resolved ordinary-corner model.  Until such a trace is constructed, the expression
\[
 \int_{{}^bT[1]X_P}\ev^*\omega_{\Y}
\]
is only symbolic.

\subsection{Two viable strategies}

The intrinsic strategy seeks a monoidal $b$-AKSZ source and a residue-compatible trace directly on $X_P$.  The resolution strategy chooses a toric refinement
\[
 \beta:\widetilde X\longrightarrow X_P
\]
for which $\widetilde X$ has ordinary corners, applies the theory of this article on $\widetilde X$, and then proves independence from the refinement.

\begin{conjecture}[Refinement invariance, provisional form]\label{conj:refinement}
Let $X$ be a compact oriented manifold with generalized corners and let $\Y$ be an exact AKSZ target.  Suppose two admissible toric refinements $\widetilde X_1\to X$ and $\widetilde X_2\to X$ admit a common refinement.  After imposing the appropriate support or renormalization conditions, the two ordinary-corner descent systems obtained on $\widetilde X_1$ and $\widetilde X_2$ are equivalent by BV--BFV pushforward over the common refinement.
\end{conjecture}

The conjecture is intentionally conditional: the notions of admissibility, equivalence, and pushforward must be fixed in the sequel.  Its advantage is that it converts the most difficult local geometry into a comparison problem for ordinary-corner theories, where \cref{thm:intro-main} is available.

A concrete program is therefore:
\begin{enumerate}[label=\arabic*.,leftmargin=2.2em]
\item define the oriented monoidal face complex and prove its incidence identity;
\item compare the intrinsic $b$-source and resolution approaches on affine models $X_P$;
\item construct a trace with a face-valued Stokes formula;
\item prove the analogue of \cref{thm:facewise};
\item prove or disprove \cref{conj:refinement} using blow-ups and generalized fiber products \cite{KottkeBlowup,KottkeMelrose};
\item recover the ordinary case $P=\mathbb N^2$ and compare it with the reduced gravitational corner theory.
\end{enumerate}

\section{Conclusions}

On an ordinary manifold with corners, the AKSZ construction is naturally a theory on the entire face poset, not merely a bulk theory supplemented by an external boundary correction.  The symplectic degree increases by one at each increase of codimension, and Stokes' formula becomes the modified Hamiltonian identity.  The equation $\partial_{\mathrm{face}}^2=0$ is the structural reason why iterated corner defects are compatible.

This organization separates two phenomena that can otherwise be conflated.  For regular topological models such as BF theory, facewise AKSZ transgression directly produces strict $\bfV{k}$ data.  For Palatini--Cartan gravity, the raw codimension-two geometry is singular; a strict $\bfV{2}$ theory appears only after the pre-Dirac structure is reduced to a Poisson graph.  The recent construction of \cite{CFT2026} supplies precisely this reduced Poisson geometry for an ordinary $\mathbb N^2$ corner.

The next step is not a formal replacement of $[0,\infty)^r$ by a more exotic local model.  It is the construction of a source differential and trace whose Stokes defect is controlled by the face lattice of a weakly toric monoid.  The quantum layer, in the sense of \cite{CMRQuantum}, is not addressed here at all.  Once this is achieved, the face-complex theorem proved here has a credible monoidal analogue and can serve as the ordinary-corner base case for an AKSZ--BV--BFV theory on Joyce generalized corners.

\appendix
\section{A compact sign dictionary}\label{app:signs}

Let $F$ be a face of dimension $m$.  With the convention
\[
 \alpha_F=\mathsf T_F^0(\alpha_{\Y}),
 \qquad
 \omega_F=\mathsf T_F^0(\omega_{\Y}),
\]
one has
\[
 \omega_F=(-1)^m\delta\alpha_F.
\]
The modified Hamiltonian identity is
\[
 \iota_{\Q_F}\omega_F
 =(-1)^m\delta S_F+\alpha_{\partial F},
 \qquad
 \alpha_{\partial F}:=
 \sum_{G\prec F}[F:G]\rho_{FG}^*\alpha_G.
\]
Consequently,
\[
 \cL_{\Q_F}\omega_F
 =(-1)^{m}\omega_{\partial F},
 \qquad
 \omega_{\partial F}:=
 \sum_{G\prec F}[F:G]\rho_{FG}^*\omega_G.
\]
The next boundary vanishes because
\[
 \sum_{H\prec G\prec F}[F:G][G:H]=0.
\]
A simultaneous redefinition of $\Q_F,S_F,$ and $\alpha_F$ can remove the factors $(-1)^m$; the face-incidence content is unchanged.

The transgression operators obey the following sign rules, used in \cref{lem:transgression-stokes,lem:target-lift}.  Fiber integration $(p_F)_*$ lowers form degree by $m$, so transposing an operator of parity $\pi$ across it costs $(-1)^{\pi m}$.  Applied to the field-space differential, which is odd, this gives
\[
 \delta\circ(p_F)_*=(-1)^m (p_F)_*\circ\delta ,
\]
which is the source of every factor $(-1)^m$ above.

The source de Rham vector field and the field-space differential are both odd, and
we use throughout the convention that they anticommute,
\[
 \delta\circ D_F=-D_F\circ\delta .
\]
This is compatible with the Cartan formula $\cL_V=[\iota_V,\delta]$ of
\cite[Appendix~D.4]{CMRClassical}, which for the degree-one field $\widehat D_F$ gives
$\cL_{\widehat D_F}=\iota_{\widehat D_F}\delta-\delta\iota_{\widehat D_F}$ as in
\cref{lem:transgression-stokes}.  It is the only point at which the graded formalism
differs from a naive component-level variational calculation, and it is responsible
for the sign of the boundary term in \cref{prop:BF-cme}.  For a homogeneous target vector field $V$ with pointwise lift $\check V$, the operator $\iota_V$ has parity $|V|+1$ and $\cL_V$ has parity $|V|$ on forms, whence
\[
 \iota_{\check V}\mathsf T_F^k\chi
 =(-1)^{(|V|+1)m}\mathsf T_F^k(\iota_V\chi),
 \qquad
 \cL_{\check V}\mathsf T_F^k\chi
 =(-1)^{|V|m}\mathsf T_F^k(\cL_V\chi).
\]
The full transgression--Stokes sign, including the coefficient $k$ of the boundary term, is fixed by the local Cartan-calculus identity \cite[Appendix~D, Eq.~(77)]{CMRClassical}; the facewise formula used here is obtained by decomposing the oriented boundary into its connected faces with incidence numbers.

\section{Local coordinate proof of the corner sign}\label{app:cornersign}

Let
\[
 F=[0,\infty)^2\times\mathbb R^{m-2}
\]
with orientation
\[
 \dd x_1\wedge\dd x_2\wedge\dd y_1\wedge\cdots\wedge\dd y_{m-2}.
\]
On $G_1=\{x_1=0\}$ the outward normal is $-\partial_{x_1}$, while on $G_2=\{x_2=0\}$ it is $-\partial_{x_2}$.  Contracting the orientation of $F$ with the outward normal gives the induced orientations
\[
 \iota_{-\partial_{x_1}}\operatorname{or}(F)
 =-\dd x_2\wedge\dd y_1\wedge\cdots\wedge\dd y_{m-2},
 \qquad
 \iota_{-\partial_{x_2}}\operatorname{or}(F)
 =+\dd x_1\wedge\dd y_1\wedge\cdots\wedge\dd y_{m-2}.
\]
Repeating the rule on each $G_i$ gives, on $H=\{x_1=x_2=0\}$,
\[
 \iota_{-\partial_{x_2}}\bigl(-\dd x_2\wedge\dd y_1\wedge\cdots\bigr)
 =+\dd y_1\wedge\cdots\wedge\dd y_{m-2},
 \qquad
 \iota_{-\partial_{x_1}}\bigl(+\dd x_1\wedge\dd y_1\wedge\cdots\bigr)
 =-\dd y_1\wedge\cdots\wedge\dd y_{m-2},
\]
so the two orders induce opposite orientations on $H$.  The asymmetry is produced at the
\emph{first} contraction, where $\iota_{\partial_{x_2}}$ crosses the factor $\dd x_1$ while
$\iota_{\partial_{x_1}}$ crosses nothing; neither of the second contractions crosses a factor.
Hence the two terms in \eqref{eq:cornersquare} cancel.  The same computation on $F=[0,1]$ with
$\operatorname{or}(F)=\dd x$ gives $[F:\{0\}]=-1$ and $[F:\{1\}]=+1$, so that
$\int_F\dd f=f(1)-f(0)$, which fixes the overall convention.

\section{Proof of the transgression--Stokes formula}\label{app:transgression}

We prove \eqref{eq:transgression-stokes} directly.  The argument is a generating-function
computation in superfields; it produces the coefficient $k$ of the boundary term and, at the same
time, shows that the normalization $1/k!$ of \eqref{eq:normalized-transgression} is forced rather
than chosen.

\subsection*{Superfield conventions}

Let $u^a$ be local coordinates on $\Y$, of internal degree $|a|$.  A point of
$\F_F=\Map(T[1]F,\Y)$ is a collection of superfields
\[
 \Phi^a\in C^\infty(T[1]F)=\Omega^\bullet(F),
 \qquad |\Phi^a|=|a| ,
\]
so that $\ev_F^*(u^a)=\Phi^a$.  We write $\dd$ for the source de Rham differential, acting on
superfields as the de Rham differential of $\Omega^\bullet(F)$; it is the operator induced by the
vector field $D_F$ on $T[1]F$.  We write $\delta$ for the field-space differential, and we use the
convention of \cref{app:signs},
\begin{equation}\label{eq:anticommute}
 \delta\circ\dd=-\dd\circ\delta ,
\end{equation}
so that the total operator
\[
 \mathsf d:=\delta+\dd
\]
satisfies $\mathsf d^2=0$.  Berezin integration
$\int_{T[1]F}=(p_F)_*$ has degree $-m$, whence
\begin{equation}\label{eq:intdelta}
 \int_{T[1]F}\delta\,\alpha=(-1)^m\,\delta\int_{T[1]F}\alpha ,
\end{equation}
while Stokes' formula \eqref{eq:face-stokes} on the face complex reads
\begin{equation}\label{eq:intd}
 \int_{T[1]F}\dd\,\alpha
 =\sum_{G\prec F}[F:G]\,\rho_{FG}^*\int_{T[1]G}\alpha ,
\end{equation}
the operator $\dd$ acting on the $\Omega^\bullet(F)$-coefficients only, so that no sign is
generated by the field-space form degree of $\alpha$.

For $\chi=\tfrac1{p!}\chi_{a_1\cdots a_p}(u)\,\dd u^{a_1}\cdots \dd u^{a_p}\in\Omega^p(\Y)$ the
ultra-local transgression is
\begin{equation}\label{eq:T0-explicit}
 \mathsf T_F^0\chi
 =\int_{T[1]F}\frac1{p!}\,\chi_{a_1\cdots a_p}(\Phi)\,
 \delta\Phi^{a_1}\cdots\delta\Phi^{a_p} .
\end{equation}

\subsection*{The insertion operator}

The vector field $\widehat D_F$ has internal degree $1$, so $\iota_{\widehat D_F}$ has total parity
$|\widehat D_F|+1=0$: it is an \emph{even} derivation of $\Omega^\bullet(\F_F)$ and obeys the
ungraded Leibniz rule.  On generators,
\[
 \iota_{\widehat D_F}\bigl(\delta\Phi^a\bigr)=\dd\Phi^a,
 \qquad
 \iota_{\widehat D_F}\bigl(f(\Phi)\bigr)=0,
 \qquad
 \iota_{\widehat D_F}\bigl(\dd\Phi^a\bigr)=0 .
\]
Consequently $\iota_{\widehat D_F}$ annihilates whatever it has already hit, and iterating it on a
product of $p$ generators gives
\begin{equation}\label{eq:iota-k}
 \iota_{\widehat D_F}^{\,k}\bigl(\delta\Phi^{a_1}\cdots\delta\Phi^{a_p}\bigr)
 =k!\sum_{\substack{S\subseteq\{1,\ldots,p\}\\ |S|=k}}
 \ \prod_{i=1}^{p}
 \begin{cases}\dd\Phi^{a_i}, & i\in S,\\[2pt] \delta\Phi^{a_i}, & i\notin S,\end{cases}
\end{equation}
the factors being kept in their original order.  The factor $k!$ counts the orders in which the $k$
chosen slots can be hit, and no signs occur because $\iota_{\widehat D_F}$ is even.

\subsection*{The generating function}

Define the \emph{total transgression} of $\chi$ by substituting $\Phi$ for $u$ and $\mathsf d\Phi$
for $\dd u$:
\begin{equation}\label{eq:total-transgression}
 \mathcal T_F(\chi)
 :=\int_{T[1]F}\frac1{p!}\,\chi_{a_1\cdots a_p}(\Phi)\,
 \mathsf d\Phi^{a_1}\cdots\mathsf d\Phi^{a_p} .
\end{equation}

\begin{lemma}\label{lem:generating}
$\displaystyle \mathcal T_F(\chi)=\sum_{k\geq0}\frac1{k!}\,\mathsf T_F^k\chi
 =\sum_{k\geq0}\mathbf T(\chi)_F^k .$
\end{lemma}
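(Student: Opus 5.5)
The plan is to expand the product of the $p$ total differentials $\mathsf d\Phi^{a_i}=\delta\Phi^{a_i}+\dd\Phi^{a_i}$ in \eqref{eq:total-transgression} and sort the resulting terms by the number $k$ of slots that receive $\dd\Phi$ rather than $\delta\Phi$. Since $\mathsf d\Phi^{a}$ is a sum, multilinearity of the product gives
\[
 \mathsf d\Phi^{a_1}\cdots\mathsf d\Phi^{a_p}
 =\sum_{k=0}^{p}\ \sum_{\substack{S\subseteq\{1,\ldots,p\}\\ |S|=k}}
 \ \prod_{i=1}^{p}
 \begin{cases}\dd\Phi^{a_i}, & i\in S,\\[2pt] \delta\Phi^{a_i}, & i\notin S,\end{cases}
\]
with the factors kept in their original order, so no reordering signs arise at this stage. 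The sum is finite, with $k\le p$, so there are no convergence issues.

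Next, I compare the $k$-th summand with \eqref{eq:iota-k}. That formula says that $\iota_{\widehat D_F}^{\,k}$ applied to $\delta\Phi^{a_1}\cdots\delta\Phi^{a_p}$ is exactly $k!$ times the inner sum over $|S|=k$. Because $\iota_{\widehat D_F}$ annihilates the coefficient $\chi_{a_1\cdots a_p}(\Phi)$ and is an even derivation, the same holds after multiplying by $\frac1{p!}\chi_{a_1\cdots a_p}(\Phi)$. Hence the $k$-th summand of the integrand of $\mathcal T_F(\chi)$ equals $\frac1{k!}\iota_{\widehat D_F}^{\,k}$ applied to the integrand of \eqref{eq:T0-explicit}.

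Finally, I commute $\iota_{\widehat D_F}^{\,k}$ past the Berezin integral to identify $\int_{T[1]F}\iota^{\,k}_{\widehat D_F}(\cdots)$ with $\iota^{\,k}_{\widehat D_F}\mathsf T_F^0\chi=\mathsf T_F^k\chi$, using \eqref{eq:Tk}. The commutation is sign-free because $\iota_{\widehat D_F}$ is even, so the parity rule of \cref{app:signs} gives $(-1)^{0\cdot m}=1$. The same identification is also used in the proof of \cref{lem:target-lift}, where $\mathsf T_F^k\chi=(p_F)_*\iota_{\widehat D_F}^{\,k}\ev_F^*\chi$. Summing over $k$ yields the first equality, and the second is the definition \eqref{eq:normalized-transgression}.

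I expect the main obstacle to be checking that \eqref{eq:iota-k} really produces the same ordered products as the binomial expansion of $\mathsf d\Phi$, with no hidden signs. Two points need care. First, the generators $\delta\Phi^a$ and $\dd\Phi^a$ have different total parities. Second, $\iota_{\widehat D_F}$ acting on $\delta\Phi^a$ in place must not introduce Koszul signs when it passes earlier factors. Both are settled by the evenness of $\iota_{\widehat D_F}$: it has internal degree $1$ and form degree $-1$. I would confirm this on $p=2$ by hand before asserting it in general.
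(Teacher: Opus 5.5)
Your proposal is correct and follows the paper's proof. You expand each $\mathsf d\Phi^{a_i}=\delta\Phi^{a_i}+\dd\Phi^{a_i}$ multilinearly, group the $2^p$ order-preserving terms by $|S|=k$, and match them with \eqref{eq:iota-k}. You also commute the even operator $\iota_{\widehat D_F}^{\,k}$ past the Berezin integral, which the paper leaves implicit but which is needed because \eqref{eq:Tk} contracts after $(p_F)_*$.

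One correction to your closing remark: $\delta\Phi^a$ and $\dd\Phi^a$ have the \emph{same} total parity $|a|+1$. The first has internal degree $|a|$ and form degree $1$; the second has internal degree $|a|+1$ and form degree $0$. That is exactly why the even derivation $\iota_{\widehat D_F}$ can send one to the other, and why neither the expansion nor \eqref{eq:iota-k} produces signs.
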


\begin{proof}
Expanding each factor $\mathsf d\Phi^{a_i}=\delta\Phi^{a_i}+\dd\Phi^{a_i}$ in
\eqref{eq:total-transgression} produces exactly one term for every subset
$S\subseteq\{1,\ldots,p\}$, namely the term in which the slots of $S$ carry $\dd\Phi$ and the
remaining ones carry $\delta\Phi$.  Grouping these terms by $|S|=k$ and comparing with
\eqref{eq:iota-k} and \eqref{eq:T0-explicit} gives
$\mathcal T_F(\chi)=\sum_k\frac1{k!}\iota_{\widehat D_F}^{\,k}\mathsf T_F^0\chi$, which is the
claim.  The sum is finite, since the $k$-th term vanishes for $k>p$.
\end{proof}

Thus the factorially normalized transgression is nothing but the homogeneous decomposition of a
single object: this is the conceptual reason for the coefficient $1/k!$ in
\eqref{eq:normalized-transgression}.

\subsection*{The identity}

\begin{proposition}\label{prop:transgression-stokes-proof}
For every homogeneous $\chi\in\Omega^p(\Y)$,
\begin{equation}\label{eq:generating-identity}
 (-1)^m\,\delta\,\mathcal T_F(\chi)
 =\mathcal T_F(\dd_{\Y}\chi)
 -\sum_{G\prec F}[F:G]\,\rho_{FG}^*\,\mathcal T_G(\chi),
\end{equation}
and \eqref{eq:transgression-stokes} is its component of field-space form degree $p-k+1$.
\end{proposition}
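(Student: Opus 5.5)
We prove \eqref{eq:generating-identity} by computing $\int_{T[1]F}\mathsf d(\,\cdot\,)$ of the integrand of \eqref{eq:total-transgression} in two ways. Write
\[
 \Xi:=\tfrac1{p!}\chi_{a_1\cdots a_p}(\Phi)\,\mathsf d\Phi^{a_1}\cdots\mathsf d\Phi^{a_p},
\]
a form on $\F_F\times T[1]F$. It is the pullback of $\chi$ under evaluation, with $\dd u$ replaced by $\mathsf d\Phi$.

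\textbf{Step one.} Since $\mathsf d^2=0$ and $\mathsf d$ is an odd derivation acting on $\chi_{a_1\cdots a_p}(\Phi)$ through the chain rule, applying $\mathsf d$ to $\Xi$ hits only the coefficient. This gives
\[
 \mathsf d\,\Xi=\tfrac1{p!}\,\partial_b\chi_{a_1\cdots a_p}(\Phi)\,\mathsf d\Phi^b\,\mathsf d\Phi^{a_1}\cdots\mathsf d\Phi^{a_p},
\]
which is exactly the integrand of $\mathcal T_F(\dd_{\Y}\chi)$. The key input is that $\Phi\mapsto\Xi$ is a morphism of dg algebras from $(\Omega^\bullet(\Y),\dd_{\Y})$ to forms with differential $\mathsf d$. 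This holds because $\mathsf d$ is a differential and the substitution $u\mapsto\Phi$, $\dd u\mapsto\mathsf d\Phi$ is multiplicative. The one point to check is that the grading conventions make $\mathsf d\Phi^a$ carry the same parity as $\dd u^a$. This follows from \eqref{eq:anticommute}, which makes $\delta$ and $\dd$ both odd in the same total grading.

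\textbf{Step two.} Integrate, using $\mathsf d=\delta+\dd$. By \eqref{eq:intdelta},
\[
 \int_{T[1]F}\delta\Xi=(-1)^m\delta\,\mathcal T_F(\chi).
\]
By \eqref{eq:intd},
\[
 \int_{T[1]F}\dd\Xi=\sum_{G\prec F}[F:G]\,\rho_{FG}^*\int_{T[1]G}\Xi|_G.
\]
Since restriction of superfields commutes with $\delta$ and with $\dd$, we have $\Xi|_G$ equal to the integrand of $\mathcal T_G(\chi)$. Combining the two gives
\[
 \mathcal T_F(\dd_{\Y}\chi)=(-1)^m\delta\,\mathcal T_F(\chi)+\sum_{G\prec F}[F:G]\,\rho_{FG}^*\mathcal T_G(\chi),
\]
which is \eqref{eq:generating-identity} after rearranging.

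\textbf{Step three.} Take components. By \cref{lem:generating}, the field-space form degree $p-k$ part of $\mathcal T_F(\chi)$ is $\tfrac1{k!}\mathsf T_F^k\chi$. Take the degree $p+1-k$ component of \eqref{eq:generating-identity}, noting that $\delta$ raises degree by one while $\rho^*$ preserves it. The left side gives $\tfrac{(-1)^m}{k!}\delta\mathsf T_F^k\chi$. The first term on the right gives $\tfrac1{k!}\mathsf T_F^k\dd_{\Y}\chi$. The boundary term contributes $\tfrac1{(k-1)!}\mathsf T_G^{k-1}\chi$, since the degree $p+1-k=p-(k-1)$ component of $\mathcal T_G(\chi)$ is that term. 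Multiplying through by $(-1)^m k!$ produces the coefficient $k\cdot(-1)^m$ in front of the boundary sum, which is \eqref{eq:transgression-stokes}.

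\textbf{Main obstacle.} The delicate step is the sign bookkeeping in step two. Specifically, \eqref{eq:intd} must hold with no sign from the field-space degree of $\Xi$, which requires moving $\dd$ to act on the $\Omega^\bullet(F)$-coefficients before integrating. I would settle this by computing in local superfield components on a product chart. There I would check that the convention \eqref{eq:anticommute} is exactly what makes \eqref{eq:intd} sign-free, and that $\mathsf d\Xi$ has no extra sign relative to $\dd_{\Y}\chi$.
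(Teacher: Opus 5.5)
Your proposal is correct and follows the paper's proof essentially step for step. The paper also uses that the substitution $u\mapsto\Phi$, $\dd u\mapsto\mathsf d\Phi$ is a dg-algebra morphism because $\mathsf d^2=0$, integrates and splits $\mathsf d=\delta+\dd$ via \eqref{eq:intdelta} and \eqref{eq:intd}, and then extracts the degree-$(p-k+1)$ component using \cref{lem:generating} before multiplying by $(-1)^m k!$; the local sign check you flag for \eqref{eq:intd} is simply taken by the paper as part of its stated conventions rather than verified separately.
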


\begin{proof}
Since $\mathsf d=\delta+\dd$ is an odd differential with $\mathsf d^2=0$ by \eqref{eq:anticommute},
the substitution $u^a\mapsto\Phi^a$, $\dd u^a\mapsto\mathsf d\Phi^a$ is a morphism of differential
graded algebras.  Explicitly,
\[
 \mathsf d\left(\frac1{p!}\chi_{a_1\cdots a_p}(\Phi)\,
 \mathsf d\Phi^{a_1}\cdots\mathsf d\Phi^{a_p}\right)
 =\frac1{p!}\bigl(\partial_b\chi_{a_1\cdots a_p}\bigr)(\Phi)\,
 \mathsf d\Phi^{b}\,\mathsf d\Phi^{a_1}\cdots\mathsf d\Phi^{a_p},
\]
because $\mathsf d$ annihilates each $\mathsf d\Phi^{a_i}$; the right-hand side is the same
substitution applied to $\dd_{\Y}\chi$.  Hence, before integration,
\begin{equation}\label{eq:pullback-commutes}
 \mathsf d\bigl(\text{integrand of }\mathcal T_F(\chi)\bigr)
 =\text{integrand of }\mathcal T_F(\dd_{\Y}\chi).
\end{equation}
Now integrate \eqref{eq:pullback-commutes} over $T[1]F$ and split $\mathsf d=\delta+\dd$.  The
$\delta$-part contributes $(-1)^m\delta\,\mathcal T_F(\chi)$ by \eqref{eq:intdelta}.  The
$\dd$-part contributes $\sum_{G\prec F}[F:G]\rho_{FG}^*\mathcal T_G(\chi)$ by \eqref{eq:intd},
since restricting the superfields to $T[1]G$ is precisely $\rho_{FG}$ and the restricted integrand
is the integrand of $\mathcal T_G(\chi)$.  This is \eqref{eq:generating-identity}.

For the components, note that $\mathbf T(\chi)_F^k$ has field-space form degree $p-k$, that
$\mathbf T(\dd_{\Y}\chi)_F^k$ has degree $(p+1)-k$, and that $\mathbf T(\chi)_G^j$ has degree
$p-j$.  Matching degree $p-k+1$ in \eqref{eq:generating-identity} therefore pairs the $k$-th
component on the left with the $k$-th component of the first term and the $(k-1)$-st component of
the boundary sum:
\[
 (-1)^m\delta\left(\frac1{k!}\mathsf T_F^k\chi\right)
 =\frac1{k!}\mathsf T_F^k(\dd_{\Y}\chi)
 -\sum_{G\prec F}[F:G]\,\rho_{FG}^*\left(\frac1{(k-1)!}\mathsf T_G^{k-1}\chi\right),
\]
the last term being absent for $k=0$.  Multiplying by $(-1)^mk!$ gives exactly
\eqref{eq:transgression-stokes}, with the coefficient $k=k!/(k-1)!$ of the boundary term.
\end{proof}

\begin{remark}
Two features of the construction are visible in this proof.  First, the coefficient $k$ in
\eqref{eq:transgression-stokes} and the normalization $1/k!$ in
\eqref{eq:normalized-transgression} are the same phenomenon read in two gradings, which is why
\cref{thm:ordered-transgression} holds with no further adjustment.  Second, the only place where
the geometry of $F$ enters is \eqref{eq:intd}: the entire face-incidence structure of the theory
is imported through Stokes' formula on the face complex, and nowhere else.
\end{remark}

\section{The face transgression complex}\label{app:totalcomplex}

We record the sign check underlying \cref{thm:ordered-transgression}.  The complex is
\[
 \mathcal C^p(M,\Y)
 =\prod_{F\in\Faces(M)}\bigoplus_{k\geq0}
   \Omega^{p-k}(\F_F),
\]
and its differential is \eqref{eq:ordered-total-differential}.  If $m=\dim F$, then
\begin{align*}
 (\mathbf D^2\eta)_F^k
 &={(-1)^m}\delta
   \sum_{G\prec F}[F:G]\rho_{FG}^*\eta_G^{k-1}\\
 &\quad+
   \sum_{G\prec F}[F:G]\rho_{FG}^*
   \bigl((-1)^{m-1}\delta\eta_G^{k-1}\bigr)\\
 &\quad+
   \sum_{H\prec G\prec F}[F:G][G:H]
   (\rho_{GH}\circ\rho_{FG})^*\eta_H^{k-2}.
\end{align*}
The first two lines cancel because $\delta$ commutes with pullback.  In the final line the composite restriction is independent of the intermediate face, and its coefficient vanishes by \cref{lem:incidence}.  Hence $\mathbf D^2=0$.

For a target $p$-form $\chi$, the component $\mathbf T(\chi)_F^k$ of
\eqref{eq:normalized-transgression} has field-space form degree $p-k$, so the normalized family belongs to $\mathcal C^p(M,\Y)$.  The coefficient $1/k!$ is forced by the factor $k$ in the transgression--Stokes formula: it converts that term into the $(k-1)$-component of the horizontal differential.  Componentwise, \eqref{eq:transgression-stokes} is therefore equivalent to
\[
 \mathbf D\mathbf T(\chi)=\mathbf T(\dd_{\Y}\chi).
\]
This is the precise algebraic meaning of descent along the face poset.  In codimension two, the horizontal part of $\mathbf D^2=0$ is exactly the cancellation in \eqref{eq:cornersquare}.

\section*{Acknowledgements}

The author had numerous helpful discussions with Claude and ChatGPT during the preparation of this manuscript. These AI assistants were also used to help improve the language, edit the LaTeX source, and enhance the clarity and readability of the manuscript. The author take full responsibility for all scientific content, analyses, interpretations, conclusions, and any errors or omissions.

\end{document}